\date{1 Oct 2019}
\title{Cluster exchange groupoids and framed quadratic differentials}
\author{Alastair King}
\address{AK:
  Mathematical Sciences,
  University of Bath, Bath BA2 7AY, U.K.}
\email{a.d.king@bath.ac.uk}
\author{Yu Qiu}
\address{YQ:
Yau Mathematical Sciences Center,
Tsinghua University,
Beijing, China}
\email{yu.qiu@bath.edu}
\newcolumntype{L}{>{$}l<{$}} 
\tikzset{->-/.style={decoration={  markings,  mark=at position #1 with
    {\arrow{>}}},postaction={decorate}}}
\tikzset{-<-/.style={decoration={  markings,  mark=at position #1 with
    {\arrow{<}}},postaction={decorate}}}
\theoremstyle{plain}
\newtheorem{theorem}{Theorem}[section]
\newtheorem{lemma}[theorem]{Lemma}
\newtheorem{corollary}[theorem]{Corollary}
\newtheorem{proposition}[theorem]{Proposition}
\newtheorem{condition}[theorem]{Condition}
\theoremstyle{definition}
\newtheorem{definition}[theorem]{Definition}
\newtheorem{example}[theorem]{Example}
\newtheorem{remark}[theorem]{Remark}
\numberwithin{equation}{section}
\numberwithin{figure}{section}
\newcommand{\Note}[1]{\textcolor{black}{#1}}
\newcommand{\note}[1]{\textcolor{black}{#1}}
\newcommand{\AKedit}[1]{\textcolor{black}{#1}}
\newcommand{\danger}[1]{\textcolor{black}{#1}}
\newcommand\hua{\mathcal}
\newcommand\ZZ{\mathbb{Z}}
\newcommand\RR{\mathbb{R}}
\newcommand\CC{\mathbb{C}}
\newcommand{\mai}{\mathbf{i}} 
\newcommand\bfc{\mathbf{C}}
\newcommand\<{\langle}
\renewcommand\>{\rangle}
\newcommand\jiantou{edge[->,>=stealth]}
\newcommand{\textfrac}[2]{\textstyle{\frac{#1}{#2}}}
\renewcommand{\setminus}{\smallsetminus}
\renewcommand{\emptyset}{\varnothing}
\newcommand{\isom}{\cong}
\newcommand{\rank}{\operatorname{rank}}
\newcommand{\Imgy}{\operatorname{Im}} 
\newcommand\Sim{\operatorname{Sim}}
\newcommand\Hom{\operatorname{Hom}}
\newcommand\End{\operatorname{End}}
\newcommand\Ext{\operatorname{Ext}}
\newcommand\diff{\operatorname{d}} 
\newcommand\Br{\operatorname{Br}} 
\newcommand\Crel{\operatorname{Co}} 
\newcommand\Brel{\operatorname{Br}}  
\newcommand\CBr{\operatorname{CT}} 
\newcommand\Grot{\operatorname{K}} 
\newcommand{\h}{\operatorname{\hua{H}}} 
\newcommand{\C}{\operatorname{\hua{C}}} 
\newcommand{\D}{\operatorname{\hua{D}}} 
\newcommand{\per}{\operatorname{per}} 
\newcommand\Aut{\operatorname{Aut}} 
\newcommand\Autp{\Aut^\circ} 
\newcommand\Stab{\operatorname{Stab}} 
\newcommand\Stap{\Stab^\circ} 
\newcommand{\EG}{\operatorname{EG}} 
\newcommand{\EGp}{\EG^\circ}       
\newcommand{\ST}{\operatorname{ST}}  
\newcommand{\STp}{\ST^\circ}  
\newcommand{\EGT}{\EG^\T}
\newcommand{\uEG}{\underline{\EG}} 
\newcommand{\CEG}{\operatorname{CEG}} 
\newcommand{\uCEG}{\underline{\CEG}} 
\newcommand\egp{\operatorname{\hua{EG}}^{\circ}}
\newcommand\egt{\operatorname{\hua{EG}}^{\T}}
\newcommand\eg{\operatorname{\hua{EG}}}
\newcommand\ceg{\operatorname{\hua{CEG}}}
\newcommand\uceg{\underline{\operatorname{\hua{CEG}}}}
\renewcommand{\k}{\mathbf{k}}
\newcommand{\tilt}[3]{{#1}^{#2}_{#3}}
\newcommand{\Cone}{\operatorname{Cone}}
\newcommand\Sph{\operatorname{Sph}}
\newcommand{\numarc}{n}
\newcommand{\numtri}{\aleph}
\newcommand{\Tri}{\bigtriangleup}
\newcommand\surf{\mathbf{S}}  
\newcommand\surfo{{\mathbf{S}}_\Tri}  
\newcommand\Diff{\operatorname{Diff}} 
\newcommand{\MCG}{\operatorname{MCG}} 
\newcommand{\BT}{\operatorname{BT}}  
\newcommand\Bt[1]{\operatorname{B}_{#1}}
\newcommand\RT{{T}} 
\newcommand\T{\mathbb{T}} 
\newcommand\M{\mathbf{M}} 
\newcommand{\Quad}{\operatorname{Quad}}
\newcommand{\FQuad}[2]{\operatorname{FQuad}^{#1}(#2)}
\newcommand{\Zer}{\operatorname{Zero}}
\newcommand{\Pol}{\operatorname{Pol}}
\newcommand{\Crit}{\operatorname{Crit}}
\newcommand{\cA}{\operatorname{CA}}
\newcommand{\SBr}{\operatorname{SBr}}
\newcommand{\UHP}{\mathbf{H}} 
\newcommand{\skel}{\wp} 
\newcommand{\cub}{\operatorname{U}} 
\newcommand\xx{\mathbf{X}} 
\newcommand\surp{\xx^\circ}
\newcommand\dd{d} 
\newcommand\conj{\operatorname{ad}}
\newcommand{\pha}{\varphi} 
\newcommand{\slicing}{\hua{P}}
\newcommand{\tstruc}{\hua{P}}
\newcommand{\torsion}{\hua{T}}
\newcommand{\torfree}{\hua{F}}
\newcommand{\twi}{\varphi} 
\newcommand{\Qgrad}{\overline{Q}}
\newcommand\class{\mathfrak{Q}}
\begin{document}
\begin{abstract}
  We introduce the cluster exchange groupoid associated to a non-degenerate quiver with potential,
  as an enhancement of the cluster exchange graph.
  In the case that arises from an (unpunctured) marked surface,
  where the exchange graph is modelled on the graph of triangulations of the marked surface,
  we show that the universal cover of this groupoid can be constructed using the covering graph
  of triangulations of the surface with extra decorations.

  This covering graph is a skeleton for a space of suitably framed quadratic differentials on the surface,
  which in turn models the space of Bridgeland stability conditions for the 3-Calabi-Yau category
  associated to the marked surface.
  By showing that the relations in the covering groupoid are homotopically trivial
  when interpreted as loops in the space of stability conditions, we show that this space is simply connected.

\bigskip\noindent
\emph{Key words:} cluster exchange groupoid,
braid group, quiver with potential,
quadratic differential, stability condition
\end{abstract}
\maketitle
\tableofcontents\addtocontents{toc}{\setcounter{tocdepth}{1}}

\section{Introduction}
This paper is the last in a series on decorated marked surfaces (\cite{QQ,QQ2,QZ2,BQZ,QZ3}).
We construct a moduli space of framed quadratic differentials for a decorated marked surface,
that is isomorphic to the space of stability conditions on the 3-Calabi-Yau (3-CY)
category associated to the surface.
We introduce the cluster exchange groupoid as the main tool to show that
(the principal component of) this space is simply connected.
\subsection{Cluster exchange graphs and groupoids}
The cluster exchange graph $\uCEG(Q)$ for a quiver $Q$, without loops or 2-cycles, is initially defined with vertices given by the clusters in the cluster algebra defined from $Q$ and edges corresponding to mutation of clusters (or seeds) \cite{FZ}.
It is an unoriented $n$-regular graph, where $n$ is the number of vertices of $Q$.
Each cluster also carries with it a quiver, obtained from the initial one by quiver mutation, and any of these can be taken as the starting point for the construction.
Thus the graph is really associated to a mutation equivalence class of quivers.

For example, when $Q$ is a Dynkin quiver of type $A_3$ the graph is (the 1-skeleton of) the famous associahedron, whose faces are six pentagons and four squares.
Indeed, the appearance of squares or pentagons is a universal phenomenon, arising whenever a pair of vertices in $Q$ are joined by no arrow or one arrow, respectively.

Since the categorification of cluster combinatorics, the cluster exchange graph may also be realised with vertices corresponding to cluster tilting objects in the cluster category
$\C(\Gamma) = \per\Gamma /  D_{fd}(\Gamma)$ where $\Gamma$ is the Ginzburg dg algebra associated
to a quiver with potential $(Q,W)$ (see \S\ref{sec:C} for definitions).
The edges still correspond to mutation, now of cluster tilting objects, and the potential is required to be non-degenerate, in the sense that $(Q,W)$ repeatedly mutates (as in \cite{DWZ}) in the same way as $Q$, without the introduction of non-cancelling 2-cycles.

In this context, there is a second closely related exchange graph $\EGp(\Gamma)$
of (finite, reachable) hearts in $D_{fd}(\Gamma)$ or, equivalently, silting objects in $\per\Gamma$,
which naturally covers the cluster exchange graph of $\C(\Gamma)$, since the image, in the quotient $\C(\Gamma)$,
of a silting object is a cluster tilting object.

The mutation, or tilting, of hearts is a directed operation and so $\EGp(\Gamma)$ is an oriented $n,n$-regular graph and it is therefore more natural to consider an oriented version $\CEG(Q)$ of the cluster exchange graph in which each edge is replaced by a 2-cycle.
The natural map $\EGp(\Gamma)\to \CEG(Q)$ is then a covering of oriented $n,n$-regular graphs in the sense that it gives a bijection between the $2n$ edges emanating from any vertex and those from its image (cf. \cite{KQ}).

The first goal of this paper is to turn both these exchange graphs into groupoids by the addition of suitable relations,
with the expectation that the map of groupoids $\egp(\Gamma)\to \ceg(Q)$ actually becomes the universal cover.
In particular, $\egp(\Gamma)$  should be simply-connected.
The main surprise here is that, as well as adding expected relations corresponding to the squares and pentagons mentioned above, we have to add additional hexagonal relations.
One nice consequence of these is that the generators of the fundamental group of $\ceg(Q)$ become `local twists'
(see \S\ref{sec:ceg} and \S\ref{sec:CT} for details).

We will prove this expectation (see Theorem~\ref{cor:QQ} and Theorem~\ref{thm:45}) in the case that
arises from (unpunctured) marked surfaces, as studied by Fomin-Shapiro-Thurston \cite{FST}, when there is a topological model for cluster exchange graph with vertices given by triangulations and the edges by `flips', which replace one diagonal of a quadrilateral by the other.
\AKedit{More precisely, in this case,  $\Gamma=\Gamma_\RT$ is associated, in the manner of \cite{LF}, to a triangulation $\RT$ of a surface $\surf$ with marked points on its boundary (see \S\ref{sec:mark-surf} for a precise definition of ``triangulation'').}
We achieve the proof by showing that there is also a topological model $\EGT(\surfo)$ of $\EGp(\Gamma_\RT)$,
given by triangulations of the decorated marked surface $\surfo$ obtained by equipping $\surf$ with an extra set of interior points, or decorations, of which there must be one in each triangle.
\AKedit{ 
Note that the $\T$ in $\EGT(\surfo)$ is a decorated version of the $\RT$ in $\EGp(\Gamma_\RT)$
and picks out a connected component of a larger graph $\EG(\surfo)$ (see \S\ref{sec:DMS} for more details).}

A key step in the proof uses the presentation of the braid twist group of $\surfo$ obtained in the prequel \cite{QZ3}.
This approach of decorating marked surfaces is inspired by Krammer's analysis of
the classical braid group \cite{Kr}, which is the case when the marked surface is a polygon,
that is, a disc with a number of marked points on the boundary.

\subsection{Quadratic differentials and stability conditions}

\AKedit{One reason for being interested in the graph $\EGp(\Gamma)$ is that it is
a \danger{skeleton} of the principal component $\Stap(\Gamma)$ of the space of
Brideland stability conditions \cite{B1} on the category $D_{fd}(\Gamma)$.
In other words, there is an embedding $\skel_\Gamma\colon \EGp(\Gamma)\to \Stap(\Gamma)$
realising $\EGp(\Gamma)$ as a dual graph to the natural cell and wall structure on $\Stap(\Gamma)$.}
Thus there is also 
an induced map on fundamental groups
\[
 \skel_*\colon \pi_1 \EGp(\Gamma) \to \pi_1 \Stap(\Gamma)
\]
This provides a strategy for proving that $\Stap(\Gamma)$ is simply connected:
namely, prove (1) that $\skel_*$ is surjective, (2) that $\skel_*$ factors through the quotient
$\pi_1 \EGp(\Gamma)\to \pi_1 \egp(\Gamma)$ and (3) that $\egp(\Gamma)$ is simply connected.
We are able to prove (2) for general $\Gamma$ (Proposition~\ref{pp:hex}),
while (3) is proved in the first part of the paper
in just the marked surface case $\Gamma=\Gamma_\RT$, as already described.

To prove (1) in that case, we adapt the work of Bridgeland-Smith \cite{BS},
who showed that a geometric model for $\Stap(\Gamma_\RT)/\Aut$ is
given by the moduli space $\Quad(\surf)$ of quadratic differentials.
More precisely, we show that a connected component $\FQuad{\T}{\surfo}$ of a suitably defined space of
$\surfo$-framed quadratic differentials (Definition~\ref{def:SoFQuad}) provides a geometric model of
$\Stap(\Gamma_\RT)$ itself.
Note that, under the framing, the decorations are identified with the zeros of the quadratic differential.

Now, the association of a WKB triangulation to a saddle free quadratic differential gives rise to
a cell and wall structure on $\FQuad{\T}{\surfo}$ that is naturally dual to the graph $\EGT(\surfo)$.
Thus we actually have an embedding $\skel_S\colon \EGT(\surfo)\to \FQuad{\T}{\surfo}$,
which models $\skel_\Gamma$ and, crucially, for which one can show
(by an argument from \cite{BS}) that the induced map on $\pi_1$ is surjective.
This enables us to complete the above strategy and show that $\Stap(\Gamma_\RT)$ is simply-connected
(Theorem~\ref{thm:s.c.}).

\note{A consequence of this is that (each component of) the moduli space $\FQuad{}{\surfo}$ of $\surfo$-framed quadratic differentials is simply connected.
Furthermore, all components are isomorphic and the set of components can be parametrised explicitly
(Corollary~\ref{cor:s.c.}).
This result is similar to results in \cite{KZ, Bo}, where topological properties
of moduli spaces of abelian/quadratic differentials with given singularities are described.
However, the methods used here are quite different.}

Note also that the methods in this paper can't prove that the space of stability conditions is contractible,
in comparison with the results of \cite{Q2,QW,PSZ,AW} which do, but these depend crucially on the cluster type being finite,
which it rarely is in the surface case.

\subsection*{Acknowledgements}
QY would like to thank Aslak Buan and Yu Zhou for
collaborating on the prequels to this paper.
He is also grateful to Tom Bridgeland and Ivan Smith for inspiring discussions.
This work is supported by Hong Kong RGC 14300817 (from Chinese University of Hong Kong)
and Beijing Natural Science Foundation (Z180003).

\section{Cluster exchange groupoid}\label{sec:CBr}
\subsection{Quivers with potential and their associated categories}\label{sec:C}
(See \cite[\S7]{K10} for more details.)
A quiver $Q$ is a directed graph and a potential $W$ on $Q$
is a linear combination of cycles in $Q$.
For each vertex $i$ of a quiver with potential $(Q,W)$,
there is an involutory operation $\mu_i$, known as \emph{mutation} \cite[\S5]{DWZ}, that produces
a new quiver with potential $(Q',W')$.
We will always assume that a quiver with potential $(Q,W)$ is \emph{non-degenerate} \cite[\S7]{DWZ},
i.e. that it has no loops or 2-cycles and iterated mutation preserves this condition,
so that the underlying quivers undergo normal quiver mutation.
Denote by $\Gamma=\Gamma(Q,W)$ the \emph{Ginzburg dg algebra (of degree 3)} associated to
a quiver with potential $(Q,W)$,
which is constructed as follows (over any infinite field $\k$).
\begin{itemize}
\item \Note{Let $\Qgrad$ be the 3-Calabi-Yau double of $Q$,}
that is, the graded quiver whose vertex set is $Q_0$
and whose arrows are: the arrows in $Q$ in degree $0$;
an arrow $a^*\colon j\to i$ in degree $-1$ for each arrow $a\colon i\to j$ in $Q$;
a loop $e^*\colon i\to i$ in degree $-2$ for each vertex $e$ in $Q$.
\item The underlying graded algebra of $\Gamma$ is the completion of
the graded path algebra $\k\Qgrad$
with respect to the ideal generated by the arrows of $\Qgrad$.
\item  The differential of $\Gamma$ is the unique continuous linear endomorphism,
homogeneous of degree $1$, which satisfies the Leibniz rule and takes the following values
\[
  \diff \sum_{e\in Q_0} e^*  =  \sum_{a\in Q_1} \, [a,a^*],
  \qquad
  \diff \sum_{a\in Q_1} a^* =  \partial W,
\]
where $\partial W$ is the full cyclic derivative of $W$ with respect to the arrows.
\end{itemize}

A triangulated category $\D$ is called \emph{$N$-Calabi-Yau} ($N$-CY)
if, for any objects $L,M$ in $\D$, we have a natural isomorphism
\begin{gather}\label{eq:serre}
    \mathfrak{S}:\Hom_{\D}^{\bullet}(L,M)
        \xrightarrow{\sim}\Hom_{\D}^{\bullet}(M,L)^\vee[N],
\end{gather}
where the graded dual of a graded vector space $V=\bigoplus_{i\in\ZZ} V_i[i]$ is
$V^\vee=\bigoplus_{i\in\ZZ} V_i^*[-i]$.
Further, an object $S$ is \emph{$N$-spherical} if $\Hom^{\bullet}(S, S)=\k \oplus \k[-N]$.

For simplicity, we will assume that $\Gamma$ is Jacobi finite, that is, $H^0(\Gamma)$ is finite dimensional.
This holds in the case of most interest for this paper, by \cite[Thm~36]{LF}.
However, the Jacobi infinite case may also be handled using results in \cite{Pla}.
There are three triangulated categories associated to a Ginzburg dg algebra $\Gamma$, namely,
\begin{itemize}
\item
the perfect derived category $\per\Gamma$,
\item
the finite-dimensional derived category $\D_{fd}(\Gamma)$,
which is a full subcategory of $\per\Gamma$
and which is 3-Calabi-Yau,
\item
the cluster category $\C(\Gamma)$, which is defined by the following
short exact sequence of triangulated categories (due to Amiot \cite{A})
\begin{gather}\label{eq:ses}
    0\to D_{fd}(\Gamma)\to\per\Gamma\to \C(\Gamma)\to0
\end{gather}
and which is 2-Calabi-Yau.
\end{itemize}

If $\Gamma'=\Gamma(Q',W')$ is obtained by mutation,
then $D_{fd}(\Gamma')\isom D_{fd}(\Gamma)$ and $\per\Gamma'\isom\per\Gamma$,
so $\C(\Gamma')\isom\C(\Gamma)$ (see \cite[\S3-4]{KY}).

\subsection{Cluster tilting and cluster exchange graphs}
(See e.g. \cite{BIRS} for more detail.)
A \emph{cluster tilting set} (or \emph{cluster}, for short) $\bfc$
in a cluster category $\C(\Gamma)$ is
a maximal collection of indecomposable objects satisfying an $\Ext^1$ vanishing condition.
There is an involutory operation, known as \emph{mutation} on any cluster tilting set $\bfc$,
with respect to any one of its objects, cf \cite{IY}.
We denote by $Q_\bfc$ the Gabriel quiver of $\bfc$,
that is, of the algebra $\End(\bigoplus_{M\in\bfc} M)$.

The cluster category $\C(\Gamma)$ admits a canonical cluster $\bfc_\Gamma$,
which is the image of the set of indecomposable summands of $\Gamma$ in $\per\Gamma$
under the quotient map in \eqref{eq:ses}.
If $\Gamma=\Gamma(Q,W)$, then $Q_{\bfc_\Gamma}=Q$, by \cite[Thm~2.1]{A}.
If $\Gamma'$ is obtained from $\Gamma$ by mutation,
then the isomorphism $\C(\Gamma')\isom\C(\Gamma)$ identifies $\bfc_{\Gamma'}$ with a mutation of $\bfc_\Gamma$.
Consequently, for a non-degenerate $(Q,W)$, the quiver $Q_\bfc$ will have no loops or 2-cycles,
for every cluster $\bfc$ in $\C(\Gamma)$ obtained by iterated mutation from $\bfc_\Gamma$.
Indeed, when such clusters are related by cluster mutation, their Gabriel quivers are related by normal quiver mutation.

\begin{definition}\label{def:CEG}
The \emph{unoriented cluster exchange graph} $\uCEG(\Gamma)$
of the cluster category $\C(\Gamma)$ is
the connected graph whose vertices are the clusters in $\C(\Gamma)$ that are reachable from $\bfc_\Gamma$ by iterated mutation
and whose edges correspond to mutations.

The \emph{oriented cluster exchange graph} $\CEG(\Gamma)$ is
the graph obtained from $\uCEG(\Gamma)$ by replacing each unoriented edge
with a 2-cycle. We call the oriented edges \emph{forward mutations}.
See Figure~\ref{fig:s and p} for two elementary examples.

Note that, by the preceding discussion, these graphs only depend on the mutation class $\class$ of the (non-degenerate) quiver with potential $(Q,W)$ that determines $\Gamma$.
Hence we also write $\uCEG(\class)$ and $\CEG(\class)$ for these graphs.
\end{definition}

\begin{figure}[ht]\centering
\begin{tikzpicture}[scale=.7, rotate=45,
  arrow/.style={->,>=stealth}]
\foreach \j in {1,2,3,4}
   {\draw (90*\j:2cm) node (t\j) {$\bullet$};
   \draw (90*\j+45:2cm) node {$x$};\draw (90*\j+45:.8cm) node {$y$};}
\foreach \a/\b in {1/2,2/3,3/4,4/1}{
  \draw (t\a) edge[arrow,bend left=15]  (t\b);
  \draw (t\b) edge[arrow,bend left=15]  (t\a);}
\draw (t1) node[white] {$\bullet$} node{$\bfc$};
\end{tikzpicture}
\qquad\quad
\begin{tikzpicture}[scale=.7, rotate=90,
  arrow/.style={->,>=stealth}]
\foreach \j in {1,2,3,4,5}
   {\draw (72*\j:2cm) node (t\j) {$\bullet$};
   \draw (72*\j+36:2.1cm) node {$x$};\draw (72*\j+36:1cm) node {$y$};}
\foreach \a/\b in {1/2,2/3,3/4,4/5,5/1}{
  \draw (t\a) edge[arrow,bend left=15]  (t\b);
  \draw (t\b) edge[arrow,bend left=15]  (t\a);}
\draw (t1) node[white] {$\bullet$} node{$\bfc$};
\end{tikzpicture}
\caption{$\CEG(Q)$ for a quiver $Q$ of type $A_1\times A_1$ and $A_2$}
\label{fig:s and p}
\end{figure}
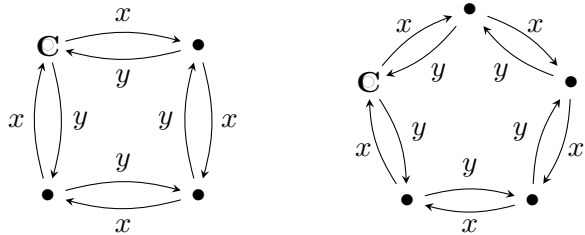

\subsection{Cluster exchange groupoids}\label{sec:ceg}
We now introduce the cluster exchange groupoid, which is an enhancement of the cluster exchange graph.
Note that the \emph{path groupoid} of an oriented graph $\mathrm{E}$ is the category
whose objects are the vertices of $\mathrm{E}$ and whose generating morphisms are
the edges of $\mathrm{E}$ and their formal inverses.

\Note{
\begin{definition}\label{def:local twist}
For each cluster $\bfc$ in $\CEG(\class)$ and each vertex $i$ of the associated quiver $Q_\bfc$,
the \emph{local twist} $t_i$ is the length two loop in the path groupoid of $\CEG(\class)$
obtained by composing the two forward mutations at $i$
corresponding to the edge $\mu_i$ in $\uCEG(\class)$.
\end{definition}
}

Let $i$ and $j$ be two vertices of $Q_\bfc$.
Then $\CEG(\class)$ contains the following full subgraph
\begin{gather}\label{eq:6}
\begin{tikzpicture}[scale=1, arrow/.style={->,>=stealth}, baseline=(bb.base)]
\path (5,0) node (bb) {}; 
\foreach \j in {1,2,3,4}
   {\draw (\j*2,0) node (t\j) {$\bullet$};}
\foreach \a/\b in {1/2,2/3,3/4}{
  \draw (t\a) edge[arrow,bend left=20]node[above]{$x$}  (t\b);
  \draw (t\b) edge[arrow,bend left=20]node[below]{$y$}  (t\a);}
\draw(4,0)node[below]{$\bfc$}(6,0)node[below]{$\bfc'$}
(3,0)node{\tiny $i$} (5,0)node{\tiny $j$} (7,0)node{\tiny $i$};
\end{tikzpicture}
\end{gather}
where the labels $i,j$ indicate the vertices being mutated;
for example, $\bfc'$ is the mutation of $\bfc$ at $j$.
The local twists at $\bfc$ are $t_i=yx$ and $t_j=xy$,
while the local twists at $\bfc'$ are $t_i'=xy$ and $t_j'=yx$,
where we compose arrows left to right.

As there are no 2-cycles in the quiver $Q_\bfc$ or any quiver obtained from it by mutation,
there can only be arrows between $i$ and $j$ in (at most) one direction.
If there are arrows in one direction, then $x$ and $y$ can be distinguished
as being forward mutation at either the tail or head of arrows.
For example, if there are arrows $j\to i$ in $Q_{\bfc}$, then there are arrows $i\to j$ in $Q_{\bfc'}$
and $x$ is mutation at the tail (or source) in each case.

\begin{definition}\label{def:ceg}
The \emph{cluster exchange groupoid} $\ceg(\class)$ is the quotient of
the path groupoid of $\CEG(\class)$ by the following three kinds of relations,
starting at each cluster $\bfc$ and for each $i,j$ in $Q_{\bfc}$
with no arrow from $i$ to $j$.
We use the notation in \eqref{eq:6}.
\begin{enumerate}
\item
The \emph{hexagonal dumbbell} relation $x^2y=yx^2$.
This can also written $xt'_i=t_ix$ (hence ``dumbbell''),
where $t_i$ and $t'_i$ are local twists at $\bfc$ and $\bfc'$, respectively, as above.
It can be drawn as a (flattened) hexagon:
\begin{gather}
\label{eq:hex}
\begin{tikzpicture}[arrow/.style={->,>=stealth}, baseline=(bb.base)]
\path (0,0) node (bb) {}; 
\draw (2,0.5) node (t1) {$\bfc$} (2,-0.5) node (t5) {$\bfc$}
   (4,0.5) node (t2) {$\bfc'$} (4,-0.5) node (t4) {$\bfc'$}
   (6,0) node (t3) {$\bullet$}  (0,0) node (t6) {$\bullet$};
\draw (2,0)node[rotate=90]{$=$} (4,0)node[rotate=90]{$=$};
\draw(t1)edge[arrow]node[above]{$x$} (t2);
\draw(t2)edge[arrow]node[above]{$x$} (t3);
\draw(t5)edge[arrow]node[below]{$x$} (t4);
\draw(t6)edge[arrow]node[below]{$x$} (t5);
\draw(t1)edge[arrow]node[above]{$y$} (t6);
\draw(t3)edge[arrow]node[below]{$y$} (t4);
\end{tikzpicture}\end{gather}
\item
The \emph{square relation} $x^2=y^2$,
when there is also no arrow from $j$ to $i$,
in which case \eqref{eq:6} extends to the subgraph of $\CEG(\class)$
on the left of Figure~\ref{fig:s and p}.
\item
The \emph{pentagon relation} $x^2=y^3$,
when there is exactly one arrow from $j$ to $i$,
in which case \eqref{eq:6} extends to the subgraph of $\CEG(\class)$
on the right of Figure~\ref{fig:s and p}.
\end{enumerate}
\end{definition}

\begin{remark}\label{rem:s and p}
When there is at most one arrow between two vertices $i$ and $j$ at $\bfc$,
the hexagonal dumbbell relation at $\bfc$ for $(i,j)$
is already implied by square or pentagon relations in $\ceg(\class)$.
More precisely, two square relations of the form $x^2=y^2$ imply $x^2y=y^3=yx^2$.
Note that, in this case, they also imply $y^2x=x^3=xy^2$,
which is another hexagonal dumbbell relation obtained by interchanging the roles of $i$ and $j$, and thus $x$ and $y$.

On the other hand, two pentagon relations of the form $x^2=y^3$ imply $x^2y=y^4=yx^2$.
\Note{
These calculations can be visualized as decomposing the hexagon into
two squares or two pentagons as shown in \eqref{eq:4 and 5}.
\begin{gather}
\label{eq:4 and 5}
\begin{tikzpicture}[arrow/.style={->,>=stealth}, baseline=(bb.base),yscale=.4, xscale=.2]
\path (0,0) node (bb) {}; 
\draw
   (0,3) node (t1) {$\bfc$} (0,-3) node (t5) {$\bfc$}
   (10,3) node (t2) {$\bfc'$} (10,-3) node (t4) {$\bfc'$}
   (10,0) node (t3) {$\bullet$}  (0,0) node (t6) {$\bullet$};
\draw(t1)edge[arrow]node[above]{$x$} (t2);
\draw(t2)edge[arrow]node[right]{$x$} (t3);
\draw(t5)edge[arrow]node[below]{$x$} (t4);
\draw(t6)edge[arrow]node[left]{$x$} (t5);
\draw(t1)edge[arrow]node[left]{$y$} (t6);
\draw(t3)edge[arrow]node[right]{$y$} (t4);
\draw(t6)edge[arrow]node[above]{$y$} (t3);
\end{tikzpicture}
\qquad
\begin{tikzpicture}[arrow/.style={->,>=stealth}, baseline=(bb.base),yscale=.4, xscale=.2]
\path (0,0) node (bb) {}; 
\draw
   (0,3) node (t1) {$\bfc$} (0,-3) node (t5) {$\bfc$}
   (10,3) node (t2) {$\bfc'$} (10,-3) node (t4) {$\bfc'$}
   (10,0) node (t3) {$\bullet$}  (0,0) node (t6) {$\bullet$};
\draw(t1)edge[arrow]node[above]{$x$} (t2);
\draw(t2)edge[arrow]node[right]{$x$} (t3);
\draw(t5)edge[arrow]node[below]{$x$} (t4);
\draw(t6)edge[arrow]node[left]{$x$} (t5);
\draw(t1)edge[arrow]node[left]{$y$} (t6);
\draw(t3)edge[arrow]node[right]{$y$} (t4);
\draw (5,0)node (t) {$\bullet$};
\draw(t6)edge[arrow]node[above]{$y$} (t);
\draw(t)edge[arrow]node[above]{$y$} (t3);
\end{tikzpicture}\end{gather}
When there is more than one arrow between two vertices,
the hexagon cannot be decomposed.
There are two (unconnected) infinite mutation sequences from/to the two middle clusters
as shown in \eqref{eq:hex}.
\begin{gather}
\label{eq:hex}
\begin{tikzpicture}[arrow/.style={->,>=stealth}, baseline=(bb.base),yscale=.4, xscale=.8]
\path (0,0) node (bb) {}; 
\foreach \j in {0,1,2,7-2,8-2,9-2} {\draw(\j+1,0) node[white](xx) {$\bullet$}; \draw(\j,0) node{$\bullet$} edge[arrow] (xx) ;}
\draw(3.1,0)node{$\cdots$}(7-2,0)node[white]{$\bullet$}node{$\cdots$};
\draw
   (0,3) node (t1) {$\bfc$} (0,-3) node (t5) {$\bfc$}
   (10-2,3) node (t2) {$\bfc'$} (10-2,-3) node (t4) {$\bfc'$}
   (10-2,0) node (t3) {$\bullet$}  (0,0) node (t6) {$\bullet$};
\draw(t1)edge[arrow]node[above]{$x$} (t2);
\draw(t2)edge[arrow]node[right]{$x$} (t3);
\draw(t5)edge[arrow]node[below]{$x$} (t4);
\draw(t6)edge[arrow]node[left]{$x$} (t5);
\draw(t1)edge[arrow]node[left]{$y$} (t6);
\draw(t3)edge[arrow]node[right]{$y$} (t4);
\end{tikzpicture}\end{gather}
}
\end{remark}

Consider a cluster $\bfc$ in $\uCEG(\class)$
with vertices $i$ and $j$ in its associated quiver $Q_\bfc$
such that there is at most one arrow between them.
The sub-graph of $\uCEG(\class)$ obtained by freezing all vertices of the quiver
$Q_\bfc$ except $i$ and $j$, is an ordinary square or pentagon,
i.e. the unoriented graphs that give the oriented graphs in Figure~\ref{fig:s and p} on replacing edges by 2-cycles.

We may also define the \emph{unoriented cluster exchange groupoid} $\uceg(\class)$
to be the quotient of the path groupoid of $\uCEG(\class)$
by appropriate square and pentagon relations.
More precisely, this path groupoid has a pair of mutually inverse generators for each unoriented edge in
$\uCEG(\class)$ and the cycle around any square or pentagon is made trivial.
Note that $\uceg(\class)$ can be obtained from $\ceg(\class)$ by setting
all local twists $t_i$ equal to 1 in Definition~\ref{def:local twist}.

The following condition is known to hold in many cases,
for example, when $\class$ is of Dynkin type (\cite[Prop~4.5]{Q2})
or when $\class$ is from a marked surface (\cite[Thm~3.10]{FST}).
Note, however, that it does not always hold (\cite[Rem~9.19]{FST}).

\begin{condition}\label{con:pi1triv}
The fundamental group of $\uceg(\class)$ is trivial.
In other words, any loop in the cluster exchange graph $\uCEG(\class)$
decomposes into ordinary (unoriented) squares and pentagons.
\end{condition}

\subsection{The cluster braid groups}\label{sec:CT}
Recall, from Definition~\ref{def:local twist}, that at each cluster $\bfc$ in $\CEG(\class)$,
there are local twists $t_i$ at $\bfc$, for each vertex $i$ of
the associated quiver $Q_\bfc$.

\begin{definition}
The \emph{cluster braid group} $\CBr(\bfc)$ of $\bfc$ is defined to be the subgroup of $\pi_1(\ceg(\class),\bfc)$ generated by $\{t_i\mid i\in Q_\bfc\}$.
\end{definition}

\begin{lemma}\label{lem:12}
If there is no arrow between $i$ and $j$, then the local twists $t_i$ and $t_j$ at $\bfc$ satisfy
$t_it_j=t_jt_i$. 
If there is exactly one arrow between $i$ and $j$, then the local twists $t_i$ and $t_j$ at $\bfc$ satisfy
$t_i t_j t_i=t_jt_i t_j$.
\end{lemma}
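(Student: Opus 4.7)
Since mutations at $i$ and $j$ commute in this case, the four clusters $\bfc, \mu_i\bfc, \mu_j\bfc, \mu_i\mu_j\bfc$ form a square in $\uCEG(\class)$. Imposing the square relation of Definition~\ref{def:ceg}(2) at each of the four corners forces any two parallel length-$2$ paths in this square to agree in $\ceg(\class)$. Writing $t_i$ and $t_j$ at $\bfc$ explicitly as length-$2$ loops using the eight forward mutations around the square, the equality $t_it_j = t_jt_i$ then follows by a short chain of substitutions that uses the square relations at the four corners in turn.

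\textbf{Case 2 (one arrow between $i$ and $j$).} Without loss of generality, assume the arrow is $j\to i$. Iterated mutations at $i$ and $j$ starting from $\bfc$ trace out a pentagon of five clusters $C_0 = \bfc, C_1, C_2, C_3, C_4$ in $\uCEG(\class)$. For each edge between $C_k$ and $C_{k+1}$ (indices mod $5$), denote the two forward mutations by $\alpha_k\colon C_k\to C_{k+1}$ and $\alpha_k'\colon C_{k+1}\to C_k$, and consider the two length-$5$ loops at $\bfc$ obtained by going around the pentagon in opposite directions:
\[
L = \alpha_0\alpha_1\alpha_2\alpha_3\alpha_4, \qquad L' = \alpha_4'\alpha_3'\alpha_2'\alpha_1'\alpha_0'.
\]
The pentagon relation (Definition~\ref{def:ceg}(3)) at $C_k$ equates the length-$2$ and length-$3$ paths from $C_k$ to $C_{k+3}$. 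Applying it at $C_1$ gives $t_it_j = L$; applying it at $C_0$ and $C_3$ gives $Lt_i = L'$; applying it at $C_4$, $C_0$ and $C_2$ gives $t_jL = L'$. Chaining these three identities,
\[
t_it_jt_i \;=\; Lt_i \;=\; L' \;=\; t_jL \;=\; t_jt_it_j.
\]

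The main bookkeeping obstacle in Case~2 is that one of the five pentagon edges carries \emph{different} local mutation labels at its two endpoints (because the arrow between $i$ and $j$ reverses under mutation at either vertex). Care is therefore needed to identify, at each $C_k$, which 2-cycle represents $t_i$ and which represents $t_j$, and to orient the $\alpha_k, \alpha_k'$ consistently so that the pentagon relations at the five clusters slot together. Once this setup is fixed, the three key identities above follow by direct substitution.
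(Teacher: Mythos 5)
Your proposal is correct and follows essentially the same route as the paper: the paper writes the forward mutations around the square (resp.\ pentagon) as words in two letters $x,y$ with $t_i=yx$, $t_j=xy$, and computes $t_it_j=y^4=x^4=t_jt_i$ (resp.\ $t_it_jt_i=y^6x=x^5=xy^6=t_jt_it_j$) using the relation $x^2=y^2$ (resp.\ $x^2=y^3$) based at several corners, which is exactly your identification of both sides with the full boundary loops $L$, $L'$. The only difference is presentational -- the paper's explicit word calculus already handles the labelling/orientation bookkeeping you flag at the end.
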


\begin{proof}
In the first case, we have the local full sub-graph shown on the left of
Figure~\ref{fig:s and p}. Then $x^2=y^2$ implies
\[
    t_i t_j=(yx)(xy)=y^4=x^4=(xy)(yx)=t_j t_i.
\]
In the second case, without loss of generality, suppose the arrow is from $j$ to $i$
so that we have the local full sub-graph shown on the right of
Figure~\ref{fig:s and p}. Then $x^2=y^3$ implies
\[\begin{array}{rll}
    t_i t_j t_i&=&(yx)(xy)(yx)=yx^2y^2x=y^6x=x^5\\
    &=&xy^6=xy^2x^2y=(xy)(yx)(xy)\\
    &=&t_j t_i t_j.
\end{array}\]

\Note{
These calculation also be also read from the pictures in Figure~\ref{fig:CoBr}, where
the bullet points denote the initial cluster $\bfc$, the green arrows are $x$ and the orange ones are $y$.}
\begin{figure}\centering
\begin{tikzpicture}[arrow/.style={->,>=stealth},scale=.5]
\matrix[blue!50] (m) [matrix of math nodes, row sep=1.5cm, column sep=1.5cm] at (0,0)
    {\bullet&\circ&\bullet\\
        \circ&\circ&\circ\\
        \bullet&\circ&\bullet\\};
\foreach \i/\j/\k/\l in {1/1/1/2,1/2/2/2,2/2/2/3,2/3/3/3,2/1/3/1,3/1/3/2}
{\path (m-\i-\j) edge[thick,->,Emerald,>=stealth] (m-\k-\l);
    \path (m-\j-\i) edge[thick,->,Orange,>=stealth] (m-\l-\k);}
\end{tikzpicture}
\qquad
\begin{tikzpicture}[arrow/.style={->,>=stealth},scale=.5]
\matrix[blue!50] (m) [matrix of math nodes, row sep=1.5cm, column sep=.7cm] at (0,0)
    {\bullet&&\circ&\bullet&\circ&&\bullet\\
    \circ&\circ&\circ&&\circ&\circ&\circ\\
    \bullet&&\circ&\bullet&\circ&&\bullet\\};
\foreach \i/\j/\k/\l in {1/1/1/3,1/3/2/3,2/3/2/5,2/5/3/5,3/5/3/7,2/1/3/1,3/1/3/3,1/5/1/7,1/7/2/7}
{\path (m-\i-\j) edge[thick,->,Emerald,>=stealth] (m-\k-\l);}
\foreach \i/\j/\k/\l in {1/1/2/1,2/1/2/2,2/2/2/3,2/3/3/3,3/3/3/4,3/4/3/5,1/3/1/4,1/4/1/5,1/5/2/5,2/5/2/6,2/6/2/7,2/7/3/7}
{\path (m-\i-\j) edge[thick,->,Orange,>=stealth] (m-\k-\l);}
\end{tikzpicture}
\caption{Braid relations of local twists comes from squares and pentagons}\label{fig:CoBr}
\end{figure}
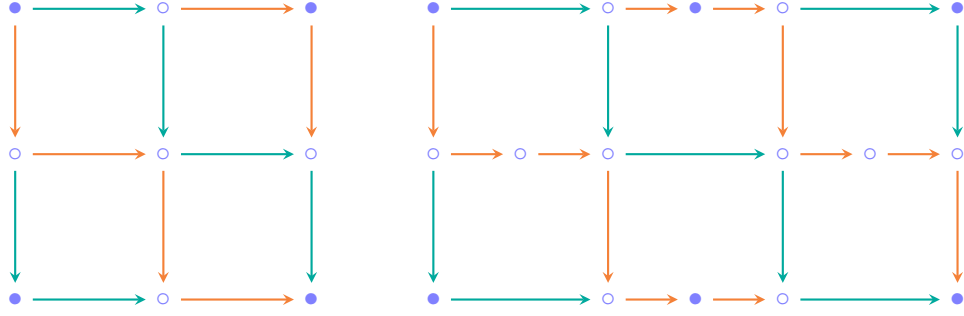
\end{proof}

\def\cyi{\bfc^\circ}
\def\cer{\bfc^\bullet}
\def\tyi{t^\circ}
\def\ter{t^\bullet}

Consider any forward mutation
$\cyi\xrightarrow{z }\cer$
in $\ceg(\class)$.
Conjugation by $z$ gives a isomorphism
\[\conj_z\colon \pi_1(\ceg(\class),\cyi)\to\pi_1(\ceg(\class),\cer)\colon
    t\mapsto z^{-1} t z.\]
Note that (locally) we can identify the vertex sets of $Q_{\cyi}$ and $Q_{\cer}$; denote this set by $Q_0$.
Denote by $\{\tyi_l \mid l\in Q_0\}$ the local twists generating $\CBr(\cyi)$
and by $\{\ter_l \mid l\in Q_0\}$ the local twists generating $\CBr(\cer)$.

\begin{proposition}[Conjugation formula]\label{pp:conj}
Suppose that $z$, as above, is forward mutation with respect to vertex $k\in Q_0$.
We have
\begin{gather*}
\begin{array}{ccl}
    \conj_z(\tyi_l)&=&\begin{cases}
    (\ter_k)^{-1} \ter_l \ter_k & \text{if there are arrows from $l$ to $k$ in $Q_0$,}\\
    \ter_l & \text{otherwise,}
    \end{cases}\end{array}
\end{gather*}
and hence $\conj_z$ induces an isomorphism
$\CBr(\cyi)\cong\CBr(\cer)$.
\end{proposition}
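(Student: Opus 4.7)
The plan is to verify the formula by a case analysis in the groupoid $\ceg(\class)$, applying the hexagonal dumbbell relation from Definition~\ref{def:ceg}(1) at whichever of $\cyi$ or $\cer$ is appropriate. Throughout, let $z' \colon \cer \to \cyi$ denote the return forward mutation in the $k$-th 2-cycle, so that (with left-to-right composition) $\tyi_k = zz'$ and $\ter_k = z'z$.

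The case $l = k$ is immediate: $\conj_z(\tyi_k) = z^{-1}(zz')z = z'z = \ter_k$, matching the ``otherwise'' branch since a quiver has no loops and hence no arrow from $k$ to itself. For $l \neq k$ with no arrow from $l$ to $k$ in $Q_{\cyi}$, the hexagonal dumbbell relation is directly available at $\cyi$ for the pair $(l,k)$, reading $z \cdot \ter_l = \tyi_l \cdot z$ in the notation of \eqref{eq:6}, which rearranges immediately to $\conj_z(\tyi_l) = \ter_l$.

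The substantive case is when there are arrows from $l$ to $k$ in $Q_{\cyi}$. The key observation is that mutation at $k$ reverses such arrows, so $Q_{\cer}$ has no arrows from $l$ to $k$, and hence the hexagonal dumbbell relation is now available instead at $\cer$ for the pair $(l,k)$. In this application the ``forward'' mutation in the $k$-th 2-cycle based at $\cer$ is $z'$, so the relation reads $z' \cdot \tyi_l = \ter_l \cdot z'$, giving $\ter_l = z' \tyi_l (z')^{-1}$. Substituting $z' = z^{-1} \tyi_k$ and $\tyi_k = z \ter_k z^{-1}$ (the latter from the $l=k$ case), the right hand side simplifies to $\ter_k \cdot \conj_z(\tyi_l) \cdot \ter_k^{-1}$, which rearranges to the required formula $\conj_z(\tyi_l) = \ter_k^{-1} \ter_l \ter_k$.

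The induced isomorphism $\CBr(\cyi) \isom \CBr(\cer)$ then follows formally, since $\conj_z$ is already an isomorphism of ambient fundamental groups and the formula shows that each generator $\tyi_l$ maps into $\CBr(\cer)$; inverting the formula expresses each $\ter_m$ as $\conj_z$ applied either to $\tyi_m$ or to $\tyi_k \tyi_m \tyi_k^{-1}$, which gives surjectivity. The main subtlety throughout is the third case: when arrows go from $l$ to $k$ the hexagon relation is unavailable at $\cyi$ and must be transported from $\cer$, and the discrepancy produced by this transport is precisely what accounts for the extra conjugation by $\ter_k$.
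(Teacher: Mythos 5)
Your proposal is correct and follows essentially the same route as the paper: both cases come down to the appropriate instance of the hexagonal dumbbell relation $xt'_i=t_ix$, applied at $\cyi$ when there is no arrow $l\to k$ and at $\cer$ (where mutation at $k$ has reversed those arrows) when there is, with the conjugation by $\ter_k=z'z$ emerging from the substitution exactly as in the paper's $x,y$ computation. Your explicit verification that $\conj_z$ is onto $\CBr(\cer)$ is a small addition the paper leaves implicit.
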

\begin{proof}
We only need to show the formula.
Consider the case that there are no arrows from $l$ to $k$ first.
Then we are in the situation of \eqref{eq:6}
such that $\cyi\xrightarrow{z}\cer$ is $\bfc\xrightarrow{x}\bfc'$ and $(k,l)=(j,i)$.
Hence the hexagon relation $x^2y=yx^2$ implies
\[
    \conj_x(\tyi_i)=x^{-1}(yx)x=x^{-1}(x^2y)=xy=\ter_i
\]
for $l\neq j$ and
\[
    \conj_x(\tyi_j)=x^{-1}(xy)x=yx=\ter_j
\]
as required.

Now, suppose there are arrows from $l$ to $k$.
Then we are in the situation of \eqref{eq:6}
such that $\cyi\xrightarrow{z}\cer$ is $\bfc'\xrightarrow{y}\bfc$ and $(k,l)=(i,j)$.
Hence the hexagon relations $x^2y=yx^2$ implies
\[
    \conj_y(\tyi_j)=y^{-1}(xy)y=(xy)^{-1}x^2y^2=(xy)^{-1}yx^2y
    =(xy)^{-1}(yx)(xy)=(\ter_i)^{-1} \ter_j \ter_i
\]
as required.
\end{proof}

\begin{proposition}\label{pp:ass}
Condition~\ref{con:pi1triv} is equivalent to the identity
$\CBr(\bfc)=\pi_1(\ceg(\class),\bfc)$ for any cluster $\bfc$.
\end{proposition}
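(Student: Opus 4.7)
The plan is to realise $\uceg(\class)$ as a quotient of $\ceg(\class)$ in a way that identifies $\CBr(\bfc)$ as the kernel of the induced map on fundamental groups. As noted just before Condition~\ref{con:pi1triv}, there is a natural functor of groupoids $\Phi\colon \ceg(\class) \to \uceg(\class)$, acting as the identity on objects and sending each 2-cycle of forward mutations $\{x,y\}$ between adjacent clusters to the corresponding unoriented edge together with its inverse. One checks that $\Phi$ is well-defined: the hexagonal dumbbell relation $x^2y=yx^2$ collapses under the identification $y=x^{-1}$, while the square and pentagon relations in $\ceg(\class)$ correspond directly to those defining $\uceg(\class)$. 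In particular $\Phi(t_i)=\Phi(yx)=1$ for every local twist $t_i$, and $\Phi$ induces a surjection $\Phi_*\colon \pi_1(\ceg(\class),\bfc)\twoheadrightarrow\pi_1(\uceg(\class),\bfc)$.

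The crux is to identify $\ker\Phi_*$ with $\CBr(\bfc)$. By a standard fact for groupoid quotients, $\ker\Phi_*$ is the normal closure in $\pi_1(\ceg(\class),\bfc)$ of the set of transported relations $\gamma\cdot t_i^{\bfc'}\cdot\gamma^{-1}$, where $t_i^{\bfc'}$ ranges over the local twists at all clusters $\bfc'$ and $\gamma$ ranges over paths from $\bfc$ to $\bfc'$ in $\ceg(\class)$. So it suffices to show, first, that $\CBr(\bfc)$ is itself a normal subgroup of $\pi_1(\ceg(\class),\bfc)$, and second, that every transported local twist already lies in $\CBr(\bfc)$. Both facts follow from Proposition~\ref{pp:conj}: a single application shows that conjugation by any forward mutation (or its inverse) carries the cluster braid group at one endpoint into the one at the other, and iterating along the edges of a loop at $\bfc$ (respectively, of $\gamma$) yields the required containments. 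Since $\CBr(\bfc)$ contains each local twist at $\bfc$ by definition, the two inclusions combine to give $\ker\Phi_*=\CBr(\bfc)$.

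Hence $\pi_1(\uceg(\class),\bfc)\cong\pi_1(\ceg(\class),\bfc)/\CBr(\bfc)$, and the equivalence is immediate: this quotient is trivial precisely when $\CBr(\bfc)=\pi_1(\ceg(\class),\bfc)$, while Condition~\ref{con:pi1triv} is precisely the triviality of the left-hand side (and basepoint-independence of both sides is built in by conjugation along any path). The main obstacle I anticipate is verifying the inclusion $\ker\Phi_*\subseteq\CBr(\bfc)$, as a priori a loop in $\ceg(\class)$ might become null-homotopic in $\uceg(\class)$ via an intricate combination of square and pentagon relations spread across many clusters. Proposition~\ref{pp:conj} is exactly the tool that transports such relations coherently back to products of local twists at $\bfc$, so its use is essential.
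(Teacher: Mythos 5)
Your proof is correct and follows essentially the same route as the paper: the paper likewise uses Proposition~\ref{pp:conj} to show that the family $\{\CBr(\bfc)\}$ is the normal subgroupoid normally generated by the local twists, and then reads off the equivalence from the short exact sequence $1\to \CBr(\bfc)\to\pi_1(\ceg(\class),\bfc)\to\pi_1(\uceg(\class),\bfc)\to 1$ associated to the quotient $\ceg(\class)\to\uceg(\class)$ that sets all local twists equal to $1$. Your version simply spells out in more detail the well-definedness of that quotient functor and the identification of its kernel, which the paper leaves implicit.
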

\begin{proof}
First, we claim that for any $t\in \CBr(\cyi)$
and any path $p\colon \cyi\to\bfc$ in $\ceg(\class)$,
the loop $p^{-1}t p$ is in $\CBr(\bfc)$.
By induction, we only need to show the case when $p$ is of length one,
which follows from the conjugation formula in Proposition~\ref{pp:conj}.
Hence, $\{\CBr(\bfc) \mid \bfc\in\ceg(\class)\}$ is the normal subgroupoid
(cf. \cite[Chap. 12]{H})
of $\pi_1(\ceg(\class),\bfc)$, normally generated by the local twists.

Consider the canonical covering $\ceg(\class)\to\uceg(\class)$, which sets all local twists equal to 1.
We have a short exact sequence
\[
    1\to \CBr(\bfc) \to \pi_1(\ceg(\class),\bfc) \to
        \pi_1(\uceg(\class),\bfc)\to1
\]
which shows the required equivalence.
\end{proof}

\subsection{Exchange graphs of hearts and tilting}\label{sec:heart}

There is a close relationship between cluster exchange graphs and exchange graphs of hearts
in the corresponding 3-CY categories $\D_{fd}(\Gamma)$.
We will describe this relationship here and show that it extends to the same relationship between groupoids.

A \emph{t-structure} $\tstruc$
on a triangulated category $\D$ is
a full subcategory $\tstruc \subset \D$
with $\tstruc[1] \subset \tstruc$ and such that, if one defines
\[
  \tstruc^{\perp}=\{ G\in\D: \Hom_{\D}(F,G)=0,
  \forall F\in\tstruc  \},
\]
then, for every object $E\in\D$, there is
a unique triangle $F \to E \to G\to F[1]$ in $\D$
with $F\in\tstruc$ and $G\in\tstruc^{\perp}$.
It is \emph{bounded} if for any object $M$ in $\D$,
the shifts $M[k]$ are in $\tstruc$ for $k\gg0$ and in $\tstruc^{\perp}$ for $k\ll0$.
We will only consider bounded t-structures.
The \emph{heart} of a (bounded) t-structure $\tstruc$ is the full subcategory
$\h=\tstruc^\perp[1]\cap\tstruc$, which is an abelian category
and also uniquely determines $\tstruc$.

A \emph{torsion pair} in a heart $\h$ (or any abelian category) is a pair of
full subcategories $\<\torfree,\torsion\>$ of $\h$,
such that $\Hom(\torsion,\torfree)=0$ and, for every $E \in \h$, there is a short exact sequence
$0 \to E^{\torsion} \to E \to E^{\torfree} \to 0$,
for some $E^{\torsion} \in \torsion$ and $E^{\torfree} \in \torfree$.
In this situation, there is a new heart $\h^\sharp$ with torsion pair
$\<\torsion,\torfree[1]\>$, called the \emph{forward tilt} of $\h$, and
a heart $\h^\flat$ with torsion pair $\<\torsion[-1],\torfree\>$,
called the \emph{backward tilt} of $\h$.
We say that a forward tilt is \emph{simple}
if $\torfree=\<S\>$ for a rigid simple $S$
and write it as $\tilt{\h}{\sharp}{S}$.
Similarly, a backward tilt is \emph{simple}
if $\torsion=\<S\>$ for a rigid simple $S$,
and we write it as~$\tilt{\h}{\flat}{S}$.
See, for example, \cite[\S3]{KQ} for more detail.

Let $\Gamma=\Gamma(Q,W)$ be the Ginzburg dg algebra of some quiver with potential $(Q,W)$.
Then $\D_{fd}(\Gamma)$ admits a \emph{canonical heart} $\h_\Gamma$ generated
by simple $\Gamma$-modules $S_e$, for $e\in Q_0$, each of which is spherical
(see e.g. \cite[\S7.4]{K10}).
\note{
Recall that, here, an object $S$ is \emph{spherical} if
$\Hom^{\bullet}(S, S)=\k \oplus \k[-3]$
}
and that it then induces a \emph{twist functor} $\twi_S$, defined by
\begin{equation*}
    \twi_S(X)=\Cone\left(S\otimes\Hom^\bullet(S,X)\to X\right)
\end{equation*}
with inverse
\[
    \twi_S^{-1}(X)=\Cone\left(X\to S\otimes\Hom^\bullet(X,S)^\vee \right)[-1].
\]
Denote by $\ST(\Gamma)\leq \Aut\D_{fd}(\Gamma)$ the \emph{spherical twist group},
generated by $\{\twi_{S_e}\mid e\in Q_0\}$.

The \emph{(total) exchange graph} $\EG(\D)$ of a triangulated category $\D$
is the oriented graph whose vertices are all hearts in $\D$
and whose directed edges correspond to simple forward tiltings between them.
We will focus attention on the \emph{principal component} $\EGp(\Gamma)$ of the
exchange graph $\EG(\D_{fd}(\Gamma))$, consisting of those hearts that are
reachable by repeated tilting from the canonical heart $\h_\Gamma$.

Denote by $\Sph(\Gamma)$ the set of \emph{reachable} spherical objects
in $\D_{fd}(\Gamma)$, that is,
\begin{gather}\label{eq:sph=st}
    \Sph(\Gamma)=\ST(\Gamma)\cdot\Sim\h_\Gamma,
\end{gather}
where $\Sim\h$ denotes the set of simples of an abelian category $\h$.
Then $\Sph(\Gamma)$
in fact consists of all the simples of reachable hearts (see, e.g. \cite{QQ}).

We have the following result (cf. \cite[Thm~8.6]{KQ} for the acyclic case).
\begin{theorem} \label{thm:KN}
Let $\Gamma$ be the Ginzburg dg algebra of some non-degenerate quiver with potential $(Q,W)$.
There is a covering  of oriented graphs
\begin{gather}\label{eq:quo}
    \EGp(\Gamma) \to \CEG(\Gamma),
\end{gather}
with covering group \AKedit{$\STp(\Gamma) =\ST(\Gamma)/\ST_0$,}
where $\ST_0$ is the subgroup of $\ST(\Gamma)$
that acts trivially on $\EGp(\Gamma)$,
\AKedit{so that $\STp(\Gamma)$ acts faithfully.}
\end{theorem}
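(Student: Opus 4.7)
The plan is to build the map $\EGp(\Gamma)\to\CEG(\Gamma)$ in stages---first on vertices and edges, then locally as a covering---and finally to identify the deck group with $\STp(\Gamma)$. For the map on vertices, I would invoke the Koenig--Yang correspondence between bounded hearts in $\D_{fd}(\Gamma)$ and silting objects in $\per\Gamma$: to $\h\in\EGp(\Gamma)$ assign the silting $P_\h$ whose indecomposable summands correspond to $\Sim\h$, and then take its image $\bfc_\h$ under the quotient \eqref{eq:ses}, which is a cluster tilting set by Amiot's theorem. By construction $\bfc_{\h_\Gamma}=\bfc_\Gamma$. For the map on edges, a simple forward tilt $\h\to\tilt{\h}{\sharp}{S}$ is the heart-level shadow of left silting mutation of $P_\h$ at the summand $P_S$ corresponding to $S$ (cf. \cite{KY}), and silting mutation descends under the quotient to cluster mutation of $\bfc_\h$. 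This gives an oriented graph morphism.

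Next I would verify the local covering property. At any $\h\in\EGp(\Gamma)$ there are exactly $n=|Q_0|$ outgoing and $n$ incoming simple tilts---one per simple---matching the two $n$-fold stars comprising the 2-cycle structure of $\CEG(\Gamma)$ at $\bfc_\h$. Since silting mutation is indexed by the chosen summand and the quotient induces a bijection between summands of $P_\h$ and of $\bfc_\h$, these stars correspond bijectively. Surjectivity then follows by induction on mutation distance from $\bfc_\Gamma$: any forward mutation of $\bfc_\h$ in $\C(\Gamma)$ lifts to the unique simple forward tilt of $\h$ at the preimage summand, so every reachable cluster is hit.

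For the deck group, the spherical twist group $\ST(\Gamma)$ acts on $\EG(\D_{fd}(\Gamma))$ via its action on hearts, and \eqref{eq:sph=st} ensures that $\EGp(\Gamma)$ is preserved. Each generating twist $\twi_{S_e}$ lifts to an autoequivalence of $\per\Gamma$ whose defining triangle is supported in $\D_{fd}(\Gamma)$, and hence is trivial in $\C(\Gamma)$; thus the action preserves fibres of $\EGp(\Gamma)\to\CEG(\Gamma)$. Transitivity on fibres is then read off from \eqref{eq:sph=st}: two hearts with the same image cluster have simple sets in a common $\ST(\Gamma)$-orbit, and a heart is determined by its simples. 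Since the covering is connected, freeness of the $\STp(\Gamma)$-action is automatic after quotienting by the ineffective kernel $\ST_0$, giving the faithful action claimed. The main obstacle I expect is exactly this transitivity step: \eqref{eq:sph=st} only supplies that each reachable simple lies in a single $\ST(\Gamma)$-orbit of $\Sim\h_\Gamma$, and one must upgrade this to the statement that two hearts with identical image cluster are related by a single element of $\ST(\Gamma)$, rather than by separately twisting each simple. This requires the silting-to-cluster-tilting correspondence to be $\ST(\Gamma)$-equivariant with fibres precisely the $\ST(\Gamma)$-orbits, and rests on the Keller--Yang derived equivalences underlying silting mutation together with the full force of the Amiot sequence \eqref{eq:ses}.
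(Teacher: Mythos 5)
Your proposal is correct and follows essentially the same route as the paper's proof: the map is defined by passing from a heart to its silting object in $\per\Gamma$ and then to its image cluster tilting set under the Amiot quotient \eqref{eq:ses}, the covering property comes from the compatibility of simple tilting with silting/cluster mutation, and the deck group is $\ST(\Gamma)$ acting transitively on fibres with $\ST_0$ as the ineffective kernel (the paper, like you, leans on Keller--Nicol\'as for the transitivity, and handles freeness by the same "a stabiliser of one heart fixes adjacent hearts, hence recursively everything" argument that you invoke via connectedness of the covering).
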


\begin{proof}
This is essentially a result of Keller-Nicol\'as \cite[Thm~5.6]{K6}.
The map is given by associating to a heart in $\D_{fd}(\Gamma)$ a silting set in $\per\Gamma$ and taking its image
in $\C(\Gamma)$ under the quotient map in \eqref{eq:ses}.
The map is a covering of oriented graphs because it is compatible with tilting/mutation.
The group $\ST(\Gamma)$ acts transitively on the fibres
and the stabiliser of a single heart must act trivially on adjacent hearts and thus, recursively, on the whole component $\EGp(\Gamma)$.
\end{proof}

\Note{
The compatibility of the covering \eqref{eq:quo} with mutation implies that
the Ext quiver of the simples in a heart $\h$ in $\EGp(\Gamma)$ coincides with
the 3-Calabi-Yau double (as in \S\ref{sec:C})
of the Gabriel quiver $Q_\bfc$ of the cluster $\bfc$ in $\CEG(\Gamma)$ to which it maps.
Note that the no-loop condition on $Q_\bfc$ becomes
the rigidity of simples in $\h$.
Recall that an object $S$ is \emph{rigid} if $\Ext^1(S,S)=0$.
One important feature of the covering \eqref{eq:quo} is that the squares, pentagons and hexagons
that we identified in $\CEG(\Gamma)$ in Section~\ref{sec:ceg} all lift to $\EGp(\Gamma)$.
}

\begin{remark}\label{rem:ST0triv}
\AKedit{
In fact, $\STp(\Gamma) =\ST(\Gamma)$, i.e. $\ST_0$ is trivial,
in both the Dynkin case (\cite{QW}) and the surface case (\cite[Theorem~B]{BQZ}).
In other words,  $\ST(\Gamma)$ acts faithfully on $\EGp(\Gamma)$ in these cases.
In Section~\ref{sec:KQ}, the surface case becomes the main example we study in the paper.
}
\end{remark}

\AKedit{In a similar way to $\CEG(\Gamma)$, the covering exchange graph $\EGp(\Gamma)$ contains hexagons, squares and pentagons and thus there is an exchange groupoid analogous to Definition~\ref{def:ceg}.
More precisely we have the following.
}

\begin{lemma}\label{lem:456}
Let $\h$ be a heart in $\EGp(\Gamma)$ with rigid simples $S_i,S_j$ satisfying $\Ext^1(S_i,S_j)=0$.
Then $S_i$ is a simple in $\h_j=\tilt{\h}{\sharp}{S_j}$
and there is a hexagon in $\EGp(\Gamma)$, as on the left of Figure~\ref{fig:hex},
where $\h_i=\tilt{\h}{\sharp}{S_i}$, $\h_{ji}=\tilt{ (\h_j) }{\sharp}{S_i}$ and $T_j=\twi_{S_i}^{-1}(S_j)$.
\end{lemma}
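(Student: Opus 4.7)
The plan is to establish the two claims in the lemma in turn. First, I would prove that $S_i$ remains simple in the tilted heart $\h_j = \tilt{\h}{\sharp}{S_j}$, which has torsion pair $\langle S_j^\perp, \langle S_j[1] \rangle \rangle$. Since $S_i$ and $S_j$ are distinct simples of $\h$, we have $\Hom(S_j, S_i) = 0$, placing $S_i$ in $S_j^\perp$, the torsion part of $\h_j$. For simplicity in $\h_j$, I would take an arbitrary short exact sequence $0 \to K \to S_i \to Q \to 0$ in $\h_j$ and decompose $Q$ via its own torsion pair as $0 \to Q^t \to Q \to Q^f \to 0$ with $Q^f$ a direct sum of copies of $S_j[1]$. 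The composite $S_i \to Q \to Q^f$ then lives in $\Hom(S_i, S_j[1]) = \Ext^1(S_i, S_j) = 0$ by hypothesis, forcing $Q^f = 0$. Hence both $Q$ and $K$ lie in $\h$, and simplicity of $S_i$ in $\h$ yields $K = 0$ or $Q = 0$.

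With $S_i$ a simple of $\h_j$, the forward tilt $\h_{ji} = \tilt{\h_j}{\sharp}{S_i}$ is well-defined. Modelled on the cluster hexagon in \eqref{eq:hex}, the six vertices of the heart hexagon are to be $\h$, $\h_j$, $\h_{ji}$, $\h_i$, together with two further hearts $\tilt{\h_i}{\sharp}{S_i[1]}$ and $\tilt{\h_{ji}}{\sharp}{S_i[1]}$. The six directed edges are the forward tilts $\h \xrightarrow{S_j} \h_j$, $\h_j \xrightarrow{S_i} \h_{ji}$, $\h_{ji} \xrightarrow{S_i[1]} \tilt{\h_{ji}}{\sharp}{S_i[1]}$ on the ``top'', $\h \xrightarrow{S_i} \h_i$, $\h_i \xrightarrow{S_i[1]} \tilt{\h_i}{\sharp}{S_i[1]}$ on the ``bottom-left'', and the closing edge $\tilt{\h_i}{\sharp}{S_i[1]} \xrightarrow{T_j} \tilt{\h_{ji}}{\sharp}{S_i[1]}$. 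Validity of each tilt at $S_i[1]$ is automatic, since $S_i[1]$ is the new simple produced by the preceding tilt at $S_i$.

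The crux is to check this closing edge. My plan is to identify two consecutive forward tilts at $S_i$ then $S_i[1]$ with the action of the inverse spherical twist $\twi_{S_i}^{-1}$ at the level of hearts (a ``3-Calabi-Yau periodicity'' phenomenon): this gives $\tilt{\h_i}{\sharp}{S_i[1]} = \twi_{S_i}^{-1}(\h)$ and $\tilt{\h_{ji}}{\sharp}{S_i[1]} = \twi_{S_i}^{-1}(\h_j)$. Since $\twi_{S_i}^{-1}$ is an autoequivalence of $\D_{fd}(\Gamma)$, it transports the edge $\h \xrightarrow{S_j} \h_j$ to $\twi_{S_i}^{-1}(\h) \xrightarrow{\twi_{S_i}^{-1}(S_j)} \twi_{S_i}^{-1}(\h_j)$, which is precisely the desired edge labelled by $T_j = \twi_{S_i}^{-1}(S_j)$. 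In particular, $T_j$ is automatically a simple of $\twi_{S_i}^{-1}(\h)$ and the hexagon closes.

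The main obstacle will be establishing the two identifications above, i.e., that iterated forward tilting at $S_i$ and its shift $S_i[1]$ realises the inverse spherical twist at the level of the heart exchange graph. This is the structural input that encodes local twists in the heart picture; once it is in hand, the autoequivalence property of $\twi_{S_i}^{-1}$ produces the closing edge together with all the simplicity statements needed along it, and the hexagon falls out automatically.
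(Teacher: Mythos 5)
Your proposal is correct and follows essentially the same route as the paper's proof: the simplicity of $S_i$ in $\h_j$ is exactly \cite[Prop.~5.4]{KQ} (which you re-derive directly), and the identification $\twi_{S_i}^{-1}(\h)=\tilt{(\tilt{\h}{\sharp}{S_i})}{\sharp}{S_i[1]}$ that you flag as the main obstacle is precisely the cited formula \cite[(8.3)]{KQ}, so it is a known input rather than a gap. Given these two facts, the paper closes the hexagon exactly as you do, by transporting the tilt $\h\xrightarrow{S_j}\h_j$ along the autoequivalence $\twi_{S_i}^{-1}$ to obtain the edge labelled $T_j=\twi_{S_i}^{-1}(S_j)$.
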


\begin{proof}
The assumption $\Ext^1(S_i,S_j)=0$ implies that $S_i$ is still a simple in $\h_j$ by \cite[Prop.~5.4]{KQ}.
By \cite[(8.3)]{KQ},
the spherical twist $\twi_{S_i}^{-1}$ of $\h$ can be obtained by two tilts,
with respect to $S_i$ and then $S_i[1]$:
\[
  \twi_{S_i}^{-1}(\h)=\tilt{\left( \tilt{\h}{\sharp}{S_i} \right)}{\sharp}{S_i[1]}.
\]
\note{
Similarly, we have
\[
    \twi_{S_i}^{-1}(\h_j)=\tilt{\left( \tilt{(\h_j)}{\sharp}{S_i} \right)}{\sharp}{S_i[1]}.
\]
Also note that, as $\twi_{S_i}^{-1}$ is an auto-equivalence,
the tilting $\h\xrightarrow{S_j}\h_j$ becomes $$\twi_{S_i}^{-1}(\h)\xrightarrow{T_j}\twi_{S_i}^{-1}(\h_j).$$
}
Thus we obtain the claimed hexagon.
\end{proof}

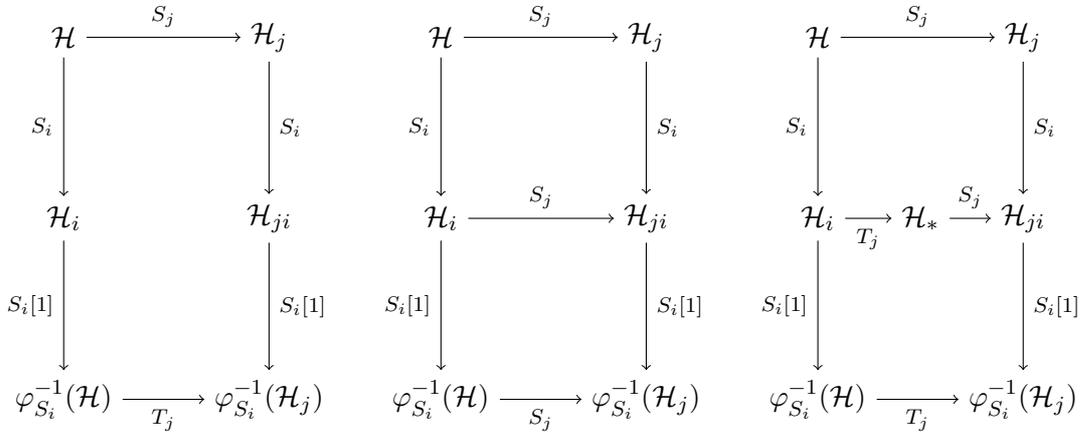
\begin{figure}[ht]
\begin{tikzpicture}[arrow/.style={->,>=stealth},rotate=-90,xscale=1.2,yscale=1.35]
\draw (0,0) node (t1) {$\h$}
    (0,2) node (t2) {$\h_j$}
    (2,2) node (t3) {$\h_{ji}$}
    (4,2) node (t4) {$\twi_{S_i}^{-1}(\h_j)$}
    (4,0) node (t5) {$\twi_{S_i}^{-1}(\h)$}
    (2,0) node (t6) {$\h_i$};
\draw [->, font=\scriptsize]
    (t1)edge node[above]{$S_j$} (t2)
    (t2)edge node[right]{$S_i$} (t3)
    (t5)edge node[below]{$T_j$} (t4)
    (t6)edge node[left]{$S_i[1]$} (t5)
    (t1)edge node[left]{$S_i$} (t6)
    (t3)edge node[right]{$S_i[1]$} (t4);
\end{tikzpicture}
\quad
\begin{tikzpicture}[arrow/.style={->,>=stealth},rotate=-90,xscale=1.2,yscale=1.35]
\draw (0,0) node (t1) {$\h$}
    (0,2) node (t2) {$\h_j$}
    (2,2) node (t3) {$\h_{ji}$}
    (4,2) node (t4) {$\twi_{S_i}^{-1}(\h_j)$}
    (4,0) node (t5) {$\twi_{S_i}^{-1}(\h)$}
    (2,0) node (t6) {$\h_i$};
\draw [->, font=\scriptsize]
    (t1)edge node[above]{$S_j$} (t2)
    (t2)edge node[right]{$S_i$} (t3)
    (t5)edge node[below]{$S_j$} (t4)
    (t6)edge node[left]{$S_i[1]$} (t5)
    (t1)edge node[left]{$S_i$} (t6)
    (t3)edge node[right]{$S_i[1]$} (t4)
    (t6)edge node[above]{$S_j$} (t3);
\end{tikzpicture}
\quad
\begin{tikzpicture}[arrow/.style={->,>=stealth},rotate=-90,xscale=1.2,yscale=1.35]
\draw (0,0) node (t1) {$\h$}
    (0,2) node (t2) {$\h_j$}
    (2,2) node (t3) {$\h_{ji}$}
    (4,2) node (t4) {$\twi_{S_i}^{-1}(\h_j)$}
    (4,0) node (t5) {$\twi_{S_i}^{-1}(\h)$}
    (2,0) node (t6) {$\h_i$}
    (2,1) node (t) {$\h_*$};
\draw [->, font=\scriptsize]
    (t1)edge node[above]{$S_j$} (t2)
    (t2)edge node[right]{$S_i$} (t3)
    (t5)edge node[below]{$T_j$} (t4)
    (t6)edge node[left]{$S_i[1]$} (t5)
    (t1)edge node[left]{$S_i$} (t6)
    (t3)edge node[right]{$S_i[1]$} (t4)
    (t6)edge node[below]{$T_j$}(t)
    (t)edge node[above]{$S_j$}(t3);
\end{tikzpicture}
\caption{A hexagon and its decompositions into squares/pentagons}
\label{fig:hex}
\end{figure}

\begin{remark}\label{rem:456}
In Lemma~\ref{lem:456}, we may also describe $\h_{ji}$ as the forward tilt of $\h$
with respect to the torsion pair with $\torfree=\<S_i,S_j\>$.
Furthermore, by \cite[Prop.~5.4]{KQ}, $T_j$ is the simple in $\h_i$ that replaces $S_j$ after the tilt from $\h$.

If, in addition, $\Ext^1(S_j, S_i)=0$, then $T_j=S_j$ and $\h_{ji}$ is also
$\tilt{ (\h_i) }{\sharp}{S_j}$.
Thus there is a square in $\EGp(\Gamma)$,
as in the upper part of the middle diagram in Figure~\ref{fig:hex},
and the hexagon in Lemma~\ref{lem:456} decomposes into two similar squares.

If $\Ext^1(S_j, S_i)=\k$, then $\h_{ji}=\tilt{  (\h_*)  }{\sharp}{S_j}$,
where $\h_*=\tilt{(\h_i)}{\sharp}{T_j}$.
This follows because $\torfree=\<S_i,S_j\>$ has just three indecomposables $S_i, T_j, S_j$
(cf. proof of Lemma~6.1 in \cite{QW}).
Thus there is a pentagon in $\EGp(\Gamma)$,
as in the upper part of the right diagram in Figure~\ref{fig:hex},
and the hexagon in Lemma~\ref{lem:456} decomposes into two similar pentagons.
\end{remark}

\begin{definition}\label{def:gpd.h}
The \emph{exchange groupoid} $\egp(\Gamma)$
is the quotient of the path groupoid of $\EGp(\Gamma)$ by
the commutation relations, starting at each heart $\h$,
corresponding to the squares, pentagons and hexagons
in Figure~\ref{fig:hex}.
\end{definition}

With this definition, Theorem~\ref{thm:KN} can be upgraded as follows,
noting that the squares/pentagons/hexagons in Figure~\ref{fig:hex} cover
the ones in Definition~\ref{def:ceg}.

\begin{proposition}\label{pp:cover ST}
There is a covering of groupoids
\begin{gather}\label{eq:cov}
    \egp(\Gamma)\to \ceg(\Gamma),
\end{gather}
with covering group $\STp(\Gamma)$.
Hence there is an induced short exact sequence
\begin{gather}\label{eq:ses ST}
    1\to \pi_1(\egp(\Gamma), \h_\Gamma) \to
        \pi_1(\ceg(\Gamma),\bfc_\Gamma) \to \STp(\Gamma) \to 1.
\end{gather}
\end{proposition}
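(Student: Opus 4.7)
The plan is to promote the graph covering of Theorem~\ref{thm:KN} to the level of groupoids and then read off the sequence~\eqref{eq:ses ST} from standard covering theory for groupoids.

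First, I would observe that the graph morphism $p\colon\EGp(\Gamma)\to\CEG(\Gamma)$ extends functorially to the corresponding path groupoids, and that to descend it to a functor $\bar p\colon\egp(\Gamma)\to\ceg(\Gamma)$ it is enough to check that every defining relation of $\egp(\Gamma)$ is sent to a defining relation of $\ceg(\Gamma)$. This is the compatibility noted after Definition~\ref{def:gpd.h}: a hexagon, square or pentagon in Figure~\ref{fig:hex} attached at a heart $\h$ and based on simples $S_i,S_j$ with $\Ext^1(S_i,S_j)=0$, is carried by $p$ onto the corresponding hexagon, square or pentagon of Definition~\ref{def:ceg} attached at the cluster $\bfc=p(\h)$ and based on the vertices $i,j$ of $Q_\bfc$. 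The three cases match because the Ext-quiver of the simples of $\h$ is the 3-Calabi-Yau double of $Q_\bfc$, so the values $\dim\Ext^1(S_j,S_i)\in\{0,1,\geq 2\}$ translate precisely into the arrow-count conditions that distinguish the square, pentagon and pure-hexagon cases.

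Next, I would verify that $\bar p$ is a covering of groupoids. Because the underlying map of oriented graphs is already a covering, each star is a bijection at the level of path groupoids; what needs to be checked is that the defining relations of $\ceg(\Gamma)$ at a cluster $\bfc$ lift bijectively to defining relations of $\egp(\Gamma)$ above each heart $\h$ over $\bfc$. Given a hexagon, square or pentagon of $\ceg(\Gamma)$ at $\bfc$ for vertices $i,j$ of $Q_\bfc$, Lemma~\ref{lem:456} together with Remark~\ref{rem:456} produces the unique corresponding configuration at $\h$ based on the simples $S_i,S_j$ of $\h$. Consequently any identification of two paths in $\ceg(\Gamma)$ via a sequence of defining relations can be lifted step by step, starting at any chosen heart over the common initial cluster, to an identification of their unique path-lifts in $\egp(\Gamma)$, so each star map of $\bar p$ is a bijection.

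Finally, I would extract the short exact sequence. The action of $\STp(\Gamma)$ on $\EGp(\Gamma)$ preserves the defining relations of $\egp(\Gamma)$, since each spherical twist is an autoequivalence of $\D_{fd}(\Gamma)$ sending hearts to hearts and simples to simples, and therefore carries every configuration of Figure~\ref{fig:hex} to another such. Hence $\STp(\Gamma)$ descends to a free transitive action on the fibres of $\bar p$, exhibiting $\bar p$ as a Galois covering with group $\STp(\Gamma)$. Choosing the base objects $\h_\Gamma$ and $\bfc_\Gamma=p(\h_\Gamma)$, the standard short exact sequence for a Galois covering of connected groupoids yields~\eqref{eq:ses ST}. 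The main obstacle lies in the bijective-lifting-of-relations step: once the combinatorial translation between tilting around a heart and mutation around a cluster is set up via the 3-Calabi-Yau double, the remainder of the argument is routine groupoid-theoretic bookkeeping.
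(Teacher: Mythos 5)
Your proposal is correct and follows essentially the same route as the paper, which states this proposition with only a one-line justification ("noting that the squares/pentagons/hexagons in Figure~\ref{fig:hex} cover the ones in Definition~\ref{def:ceg}") built on Theorem~\ref{thm:KN}, Lemma~\ref{lem:456} and Remark~\ref{rem:456}. You have simply written out in full the relation-matching, bijective lifting of relations, and the standard Galois-covering exact sequence that the paper leaves implicit.
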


For the mutation class of a Dynkin quiver, we can explicitly identify these fundamental groups.

\begin{theorem}\label{thm:QW}
If $\Gamma=\Gamma(Q,0)$ for a Dynkin quiver $Q$, then
$\CBr(\bfc_\Gamma) = \pi_1(\ceg(\Gamma),\bfc_\Gamma)$ and is isomorphic to
the usual braid group $\Br(Q)$.
\end{theorem}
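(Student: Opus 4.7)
The plan is to deduce both halves of the theorem from results already assembled in the paper, together with standard inputs in the Dynkin case. For the identity $\CBr(\bfc_\Gamma) = \pi_1(\ceg(\Gamma),\bfc_\Gamma)$, I would simply invoke Proposition~\ref{pp:ass}: that proposition says the desired equality is equivalent to Condition~\ref{con:pi1triv}, and Condition~\ref{con:pi1triv} is already noted in the text to hold for Dynkin mutation classes by \cite[Prop.~4.5]{Q2}. So this half is essentially immediate once the groupoid machinery is in place.

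For the identification with $\Br(Q)$, I would construct a homomorphism $\varphi\colon\Br(Q)\to\CBr(\bfc_\Gamma)$ sending each standard generator $\sigma_i$ to the local twist $t_i$ at the canonical cluster $\bfc_\Gamma$. Since $Q$ is Dynkin, each pair of vertices is joined by at most one arrow, and Lemma~\ref{lem:12} shows that the $t_i$ satisfy exactly the commutation and braid relations on which $\Br(Q)$ is presented; thus $\varphi$ is well defined and, by construction, surjective onto $\CBr(\bfc_\Gamma)$.

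For injectivity I would compose with further maps until landing in a group where the result is classical. The short exact sequence~\eqref{eq:ses ST} from Proposition~\ref{pp:cover ST} together with the Dynkin facts $\pi_1(\egp(\Gamma),\h_\Gamma)=1$ and $\STp(\Gamma)=\ST(\Gamma)$---the latter recorded in Remark~\ref{rem:ST0triv}, the former available from \cite{QW}---produce an isomorphism $\pi_1(\ceg(\Gamma),\bfc_\Gamma)\xrightarrow{\sim}\ST(\Gamma)$. Under this isomorphism each $t_i$ corresponds to the spherical twist $\twi_{S_i}$: lifting the length-two loop $t_i$ through the covering of Theorem~\ref{thm:KN} produces a path in $\EGp(\Gamma)$ whose endpoints differ precisely by that deck transformation. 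Hence the composite
\[
    \Br(Q)\xrightarrow{\varphi}\CBr(\bfc_\Gamma)\hookrightarrow\pi_1(\ceg(\Gamma),\bfc_\Gamma)\xrightarrow{\sim}\ST(\Gamma)
\]
is the classical Seidel--Thomas homomorphism $\sigma_i\mapsto\twi_{S_i}$, which is known to be an isomorphism in the Dynkin case, and injectivity of $\varphi$ follows.

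The main obstacle, in my view, is the identification of the local twists $t_i$ with the spherical twists $\twi_{S_i}$ through the covering map $\pi_1(\ceg(\Gamma))\to\STp(\Gamma)$. This is the only step that is not purely formal or an appeal to cited results: it requires tracing the double-edge loop at vertex $i$ through the covering of Theorem~\ref{thm:KN} and verifying it lifts to a path in $\EGp(\Gamma)$ realising precisely the deck transformation $\twi_{S_i}$. Once this matching is established, the theorem reduces to an assembly of the cited inputs.
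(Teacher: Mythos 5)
Your argument is correct and uses exactly the same ingredients as the paper's proof: Proposition~\ref{pp:ass} plus \cite[Prop~4.5]{Q2} for the equality $\CBr(\bfc_\Gamma)=\pi_1(\ceg(\Gamma),\bfc_\Gamma)$, and the sequence \eqref{eq:ses ST} together with the triviality of $\pi_1(\egp(\Gamma),\h_\Gamma)$ and the faithfulness results of \cite{QW} for the identification with $\Br(Q)$. The only difference is presentational --- you package the second half as an explicit generator-level map $\sigma_i\mapsto t_i$ and track $t_i$ to $\twi_{S_i}^{\pm1}$ through the covering (which is exactly \cite[(8.3)]{KQ} as used in Lemma~\ref{lem:456}), whereas the paper chains abstract isomorphisms and records the generator correspondence only in the remark following the theorem.
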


\begin{proof}
\Note{
As explained in Remark~\ref{rem:456},
there are squares and pentagons in the exchange graph $\EGp(\Gamma)$.
}
By \cite[Remark~6.2]{QW},
$\pi_1(\EGp(\Gamma), \h_\Gamma)$ is generated by these squares and pentagons
and hence $\pi_1(\egp(\Gamma), \h_\Gamma)$ is trivial.
Thus \eqref{eq:ses ST} gives an isomorphism
$\pi_1(\ceg(\Gamma),\bfc_\Gamma) \cong \STp(\Gamma)$.
By Remark~\ref{rem:ST0triv} and \cite[Cor.~6.12]{QW},
\[
 \STp(\Gamma)=\ST(\Gamma)\isom \Br(Q).
\]
In this case, Condition~\ref{con:pi1triv} holds by \cite[Prop~4.5]{Q2},
so $\CBr(\bfc_\Gamma)=\pi_1(\ceg(\Gamma),\bfc_\Gamma)$ by Proposition~\ref{pp:ass}.
\end{proof}

Note that the same conclusion holds for $\Gamma=\Gamma(Q,W)$, where $(Q,W)$ is mutation equivalent to a Dynkin quiver, simply by choosing a mutation sequence from it to $(Q,W)$.
However, the isomorphism $\CBr(\bfc_\Gamma)\cong\Br(Q)$ only sends the local twist generators to
standard generators in the case of the quiver itself.

When the mutation class is of type $A_1\times A_1$ or $A_2$,
the universal cover of $\ceg(\Gamma)$ is described in \cite[Example~3.7 and 3.8]{Qs}.

\note{
\begin{remark}
In Section~\ref{sec:stab},
we will see that the exchange graph $\EGp(\Gamma)$ is a \danger{skeleton}
(in the sense explained there)
for the space $\Stap(\Gamma)$ of Bridgeland stability conditions on $\D_{fd}(\Gamma)$.
In Section~\ref{sec:simply} (Proposition~\ref{pp:hex}),
we will show that the squares, pentagons and hexagons in $\EGp(\Gamma)$ are contractible loops in $\Stap(\Gamma)$.
This fact also justifies these relations in the exchange groupoids $\egp(\Gamma)$ and $\ceg(\Gamma)$.
\end{remark}
}

\section{Decorated marked surfaces} \label{sec:KQ}
\subsection{Marked surfaces} \label{sec:mark-surf}

Following \cite{FST}, an \emph{(unpunctured) marked surface} $\surf$
is a compact connected oriented smooth surface
with a finite set $\M$ of marked points on its boundary~$\partial\surf$,
satisfying the following conditions:
\begin{itemize}
\item  $\surf$ is not closed, i.e. $\partial\surf\neq\emptyset$,
\item  each connected component of $\partial\surf$ contains at least one marked point.
\end{itemize}
Up to diffeomorphism, $\surf$ is determined by the following data
\begin{itemize}
\item the genus $g$,
\item the number $b$ of boundary components,
\item the integer partition of $m=\#\M$ into $b$ parts giving the number of marked points
on each boundary component.
\end{itemize}


In this paper, we use the following terminology:
\begin{itemize}
\item an \emph{open arc} is (the isotopy class of) a curve on $\surf$ with endpoints in $\M$
but otherwise in $\surf \setminus \partial\surf$,
which is \emph{simple}, that is, does not intersect itself (except maybe at its endpoints)
and \emph{essential}, that is, not homotopic to a constant arc or a boundary arc
(i.e. an arc in $\partial\surf$),
\item a \emph{curve} is a closed curve in $\surf \setminus \M$,
\item two arcs are \emph{compatible} if they do not intersect (except maybe at their endpoints),
\item an \emph{ideal triangulation} $\RT$ of $\surf$
is a maximal collection of compatible open arcs,
considered up to isotopy.
\end{itemize}
Recall first the following elementary result
(cf. \cite[Prop.~2.10]{FST}).

\begin{proposition}\label{pp:FST}
Any ideal triangulation $\RT$ of $\surf$, consists of
\begin{equation}\label{eq:n}
\numarc=6g-6+3b+m
\end{equation}
open arcs and divides $\surf$ into $\numtri=({2\numarc+m})/{3}$ triangles.
\end{proposition}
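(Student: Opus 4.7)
The plan is to prove both formulas simultaneously via a straightforward Euler characteristic computation, treating the ideal triangulation as a CW decomposition of $\surf$. The essential point is that, once maximality of a collection of compatible arcs is unpacked, the combinatorics of triangles, arcs and boundary segments satisfies two simple linear relations, and these together with $\chi(\surf) = 2 - 2g - b$ determine $\numarc$ and $\numtri$.

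First, I would justify that a maximal collection $\RT$ of compatible open arcs cuts $\surf$ into pieces each of which is a (topological) triangle. This is really the only non-formal step: one argues that a region in the complement of $\RT$ is a disc (otherwise one could add another essential arc, contradicting maximality), and that the number of arcs (interior or boundary) bounding each such disc is exactly three (again by maximality, since any polygon with more than three sides admits a diagonal). Taking this granted, $\surf$ acquires a CW structure with $V = m$ vertices (the marked points), $E = \numarc + m$ edges (the $\numarc$ interior arcs of $\RT$ plus the $m$ boundary segments, using that a boundary component with $k$ marked points contributes $k$ boundary edges and $\sum k_i = m$), and $F = \numtri$ faces (the triangles).

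Next, I would use the fact that each triangle has exactly three sides, each interior arc is shared by two triangles, and each boundary edge is incident to a single triangle. Summing edge–face incidences gives
\[
  3\numtri = 2\numarc + m,
\]
which is the second formula. Then the Euler characteristic formula reads
\[
  2 - 2g - b = \chi(\surf) = V - E + F = m - (\numarc + m) + \numtri = -\numarc + \numtri.
\]
Substituting $\numtri = (2\numarc + m)/3$ and solving for $\numarc$ yields
\[
  \numarc = m + 6g + 3b - 6,
\]
which is the first formula.

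The main obstacle, as indicated above, is the topological step asserting that every complementary region of $\RT$ is a triangular disc; everything after that is pure bookkeeping. In the unpunctured setting this is standard (see \cite[Prop.~2.10]{FST}), but I would take care to invoke the hypotheses $\partial\surf \neq \emptyset$ and that every boundary component carries a marked point, since otherwise monogon or once-holed regions could appear and the counting would have to be adjusted.
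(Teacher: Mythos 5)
Your proof is correct. The paper does not actually prove this statement — it simply cites \cite[Prop.~2.10]{FST} — and your Euler characteristic computation ($\chi(\surf)=2-2g-b$, the incidence count $3\numtri = 2\numarc+m$, and $V-E+F$ with $V=m$, $E=\numarc+m$, $F=\numtri$) is precisely the standard argument underlying that citation, with the one genuinely topological step (complementary regions are triangular discs) correctly identified and appropriately deferred. The only cosmetic caveat is that ``each interior arc is shared by two triangles'' should be read as ``each interior arc occurs twice as a side of a triangle, counted with multiplicity,'' since the two triangles adjacent to an arc may coincide; the incidence count is unaffected.
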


Note that the sides of a triangle in a triangulation can be either open arcs or boundary arcs.
As in \cite[\S2]{FST}, when $\surf$ is a disc, i.e. $g=0$ and $b=1$, we will require
$m\geq 4$, so that $\numarc\geq1$ in \eqref{eq:n} and $\numtri\geq 2$.

The \emph{unoriented exchange graph} $\uEG(\surf)$ has vertices
corresponding to ideal triangulations of $\surf$ and edges corresponding to \emph{flips},
as illustrated in Figure~\ref{fig:ord-flip}.
Every open arc in an ideal triangulation is the diagonal of
the quadrilateral formed by the two triangles on either side of it.
Hence every open arc can be flipped and so the exchange graph is $n$-regular.

\begin{figure}[ht]\centering

\begin{tikzpicture}[scale=.6, rotate=180]
\draw (0,0)node {$\bullet$} rectangle (2,2)node {$\bullet$};
\draw (2,0)node {$\bullet$} to (0,2)node {$\bullet$};

\draw(3,1) node [rotate=0,ultra thick] {\Huge{$-$}};

\draw (3+1,0)node {$\bullet$} to (5+1,2)node {$\bullet$};
\draw (3+1,2)node {$\bullet$} rectangle (5+1,0)node {$\bullet$};
\end{tikzpicture}
\caption{An ordinary (unoriented) flip}
\label{fig:ord-flip}
\end{figure}
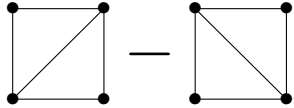

Given a pair of open arcs in an ideal triangulation, one of three things can happen.
\begin{itemize}
\item The two arcs border four distinct triangles, which form two quadrilaterals in the surface and the two flips are independent, giving rise to a `square' in the exchange graph, as on the left of Figure~\ref{fig:Pent}.
\item The two arcs share one triangle, so border three distinct triangles, which form a local pentagon in the surface and repeated flips give rise to a `pentagon' in the exchange graph, as on the right of Figure~\ref{fig:Pent}.
\item The two arcs share two triangles, which form a local annulus in the surface and
repeated flips give rise to a infinite line in the exchange graph,
part of which is shown in Figure~\ref{fig:flat-hex},
where the thick green lines are glued to make the annulus.
\end{itemize}

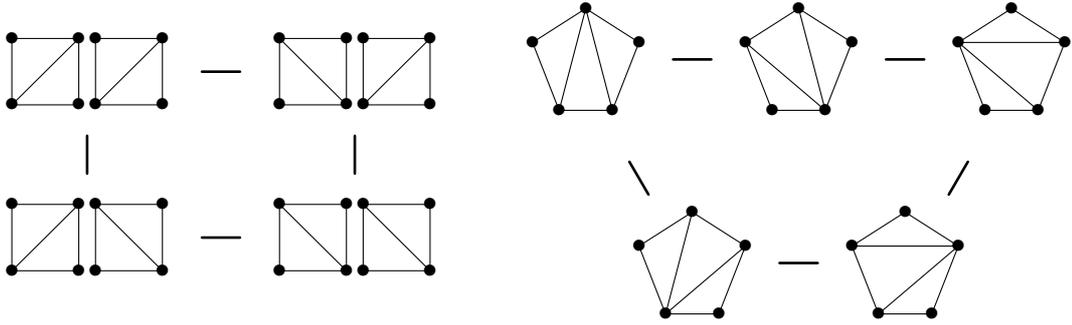
\begin{figure}[ht]\centering
\begin{tikzpicture}[scale=.44, rotate=180]
  \foreach \j in {0,8}
  \foreach \k in {0,5}
  {
    \draw (0+\j,\k+0)node {$\bullet$} rectangle (2+\j,\k+2)node {$\bullet$};
    \draw (2+\j,\k+0)node {$\bullet$}  (0+\j,\k+2)node {$\bullet$};
    \draw (0+2.5+\j,\k+0)node {$\bullet$} rectangle (2+2.5+\j,\k+2)node {$\bullet$};
    \draw (2+2.5+\j,\k+0)node {$\bullet$}  (0+2.5+\j,\k+2)node {$\bullet$};
  }
\draw (0,0)to(2,2);\draw (2+2.5,0)to(0+2.5,2);
\draw (8+0+2.5,0)to(8+2+2.5,2);\draw (8+2,2)to(8+0,0);
\draw (8+0+2.5,5+0)to(8+2+2.5,5+2);\draw (8+2,5+0)to(8+0,5+2);
\draw (0+2.5,5+2)to(2+2.5,5);\draw (2,5+0)to(0,5+2);

\draw(2.25+4,1)node[rotate=0,ultra thick]{\Huge{$-$}};
\draw(2.25+4,1+5)node[rotate=0,ultra thick]{\Huge{$-$}};
\draw(2.25,1+2.5)node[rotate=90,ultra thick]{\Huge{$-$}};
\draw(2.25+8,1+2.5)node[rotate=90,ultra thick]{\Huge{$-$}};
\draw(0,8.5)node{};
\end{tikzpicture}
\qquad
\begin{tikzpicture}[xscale=.35,yscale=.45]
  \foreach \j in {0, 8, 16}
  {
    \draw(0+\j,0)node{$\bullet$}to(2+\j,0)node{$\bullet$}to(3+\j,2)node{$\bullet$}
    to(1+\j,3)node{$\bullet$}to(-1+\j,2)node{$\bullet$}to(0+\j,0);
  }
  \foreach \j in {4, 12}
  {
    \draw(0+\j,0-6)node{$\bullet$}to(2+\j,0-6)node{$\bullet$}to(3+\j,2-6)node{$\bullet$}
    to(1+\j,3-6)node{$\bullet$}to(-1+\j,2-6)node{$\bullet$}to(0+\j,0-6);
  }
\draw(0,0)to(1,3)to(2,0);
\draw(-1+8,2)to(2+8,0)to(1+8,3) (0,4)node{};
\draw(2+8+8,0)to(-1+8+8,2)to(3+8+8,2);
\draw(1+4,3-6)to(0+4,0-6)to(3+4,2-6);
\draw(0+12,0-6)to(3+12,2-6)to(-1+12,2-6);

\draw (1+4,1.5) node[rotate=0,ultra thick]{\Huge{$-$}};
\draw (1+12,1.5) node[rotate=0,ultra thick]{\Huge{$-$}};
\draw (1+2,1-3) node[rotate=-60,ultra thick]{\Huge{$-$}};
\draw (1+6+8,1-3) node[rotate=60,ultra thick]{\Huge{$-$}};
\draw (1+8,1.5-6) node[rotate=0,ultra thick]{\Huge{$-$}};
\end{tikzpicture}
\caption{A square and a pentagon in the unoriented exchange graph}
\label{fig:Pent}
\end{figure}

\tikzset{gluedge/.style={line width=4pt,green!50,>=stealth}}

\begin{figure}[hb]\centering
\begin{tikzpicture}[scale=1, rotate=0]
  \foreach \j/\k in {0/2, -3/2,  3/2, 6/2}
  {
    \draw[gluedge] (\j,\k+0) to (\j,\k+2);
    \draw[gluedge] (\j+2,\k+2) to (\j+2,\k);
    \draw[thick] (\j+0,\k+0)to(\j+2,\k)  (\j+2,\k+2)to(\j,\k+2);
  }
  \foreach \j/\k in {0/2, 3/2}
  {\draw[thick] (\j+0,\k+0)to(\j,\k+2)  (\j+2,\k+0)to(\j+2,\k+2);}

  \foreach \j/\k in {0/2, -3/2, 0/2}
  {\draw[thick] (\j+0,\k+0)to(\j+2,\k+2);}

  \foreach \j/\k in {3/2, 6/2}
  {\draw[thick] (\j+2,\k+0)to(\j+0,\k+2);}

  \foreach \j/\k in {-3/2}
  {\draw[thick] (\j+0,\k+0)to(\j+2,\k+1) (\j+0,\k+1)to(\j+2,\k+2);}

  \foreach \j/\k in {6/2}
  {\draw[thick] (\j+0,\k+2)to(\j+2,\k+1) (\j+0,\k+1)to(\j+2,\k+0);}
\foreach \j/\k in {0/2, -3/2,  3/2, 6/2}
  {\draw(\j,\k)node {$\bullet$}(\j,\k+2)node {$\bullet$}
        (\j+2,\k+0)node {$\bullet$}(\j+2,\k+2)node {$\bullet$};}
\foreach \j in {-3.5, -0.5, 2.5, 5.5,8.5}
{\draw(\j,3)node[rotate=0,ultra thick]{\Huge{$-$}};}
\foreach \j in {-4.25, 9.25}
{\draw(\j,3)node[rotate=0,ultra thick]{\large{$\cdots$}};}

\end{tikzpicture}
\caption{A length three interval of an infinite line in the unoriented exchange graph}
\label{fig:flat-hex}
\end{figure}
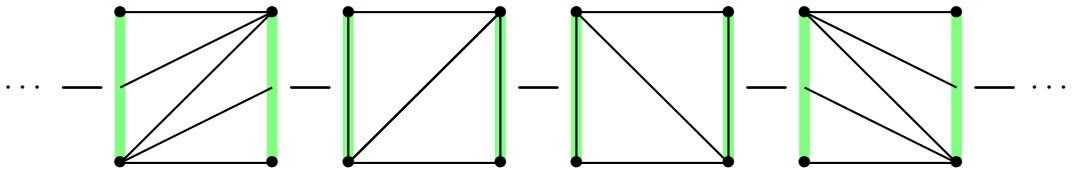

\subsection{The cluster category of a surface} \label{sec:cc-surf}

Let $\surf$ be a marked surface with a triangulation $\RT$.
Then there is an associated quiver $Q_\RT$ (without loops or 2-cycles) with a potential $W_\RT$,
constructed as follows
(cf. \cite[\S4]{FST} and \cite[\S3]{LF}, which also deal with the more general punctured case):
\begin{itemize}
\item the vertices of $Q_\RT$ correspond to the open arcs in $\RT$;
\item for each angle in $\RT$, i.e. a pair of open arcs in the same triangle, there is an arrow
    between the corresponding vertices pointing towards the arc that is a positive (anti-clockwise) rotation of the other;
\item if three open arcs form a triangle, then the corresponding arrows form a 3-cycle in $Q_\RT$
(as in Figure~\ref{fig:type I}) and $W_\RT$ is the sum of all such 3-cycles.
\end{itemize}
If two triangulations are related by a flip, then both the corresponding quivers and quivers with potential are related by mutation (\cite[Prop~4.8]{FST} and \cite[Thm~30]{LF}).
In particular, since $Q_T$ never has loops or 2-cycles, $(Q_\RT,W_\RT)$ is always non-degenerate.

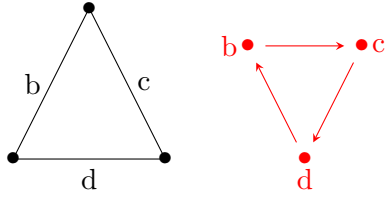
\begin{figure}[ht]\centering
\begin{tikzpicture}[scale=1]
\draw(0,0)node {$\bullet$} to (1,2)node {$\bullet$} to (2,0)node {$\bullet$}
    --cycle;\draw (1,0)node[below]{d}(.5,1)node[left]{b}(1.5,1)node[right]{c};
\end{tikzpicture}\quad
\begin{tikzpicture}[scale=1.5]
\draw[red] (1,0)node[red](d){$\bullet$}node[below]{d}
    (.5,1)node[red](b){$\bullet$}node[left]{b}
    (1.5,1)node[red](c){$\bullet$}node[right]{c};
\draw[red](d)\jiantou(b) (b)\jiantou(c) (c)\jiantou(d);
\end{tikzpicture}
\caption{Part of the quiver with potential associated to a triangle on a marked surface}
\label{fig:type I}
\end{figure}

As proved in \cite{BZ}, this correspondence identifies $\uEG(\surf)$ with the cluster exchange graph of the associated mutation class of quivers with potential.

\begin{theorem}\label{thm:BZ}
Let $\Gamma_\RT$ be the Ginzburg dg algebra $\Gamma(Q_\RT,W_\RT)$, for a triangulation $\RT$ of
a marked surface $\surf$.
There is a bijection $\gamma\mapsto M_\gamma$ between the set of open arcs on $\surf$ and
the set of rigid indecomposables in $\C(\Gamma_\RT)$.
This induces an isomorphism $\uEG(\surf)\cong\uCEG(\Gamma_\RT)$,
sending any triangulation $\{\gamma\}$ to a cluster tilting set $\{M_\gamma\}$.
In particular, it sends the initial triangulation $\RT$ to the canonical cluster tilting set
$\bfc_\RT:=\bfc_{\Gamma_\RT}$.
\end{theorem}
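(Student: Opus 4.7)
My plan is to establish the bijection $\gamma \mapsto M_\gamma$ first on the level of triangulations and clusters, then extend it to the level of individual arcs and indecomposables by a consistency argument.

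\emph{Step 1: Initial bijection.} By construction of $(Q_\RT, W_\RT)$ in \S\ref{sec:cc-surf}, the vertices of $Q_\RT$ are by definition the open arcs of $\RT$. Under the standard correspondence between vertices of the Ginzburg quiver and indecomposable summands of $\Gamma_\RT$, these arcs biject with the indecomposable summands of $\bfc_{\Gamma_\RT}$. This gives the claim for $\RT$ and defines the cluster $\bfc_\RT$.

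\emph{Step 2: Compatibility of flips with mutations.} The key input is the result of Labardini-Fragoso (cited after the construction of $(Q_\RT,W_\RT)$) that flipping an arc $\gamma$ in $\RT$ produces a new triangulation $\RT'$ whose associated quiver with potential $(Q_{\RT'}, W_{\RT'})$ is the DWZ mutation of $(Q_\RT, W_\RT)$ at the corresponding vertex. Combining this with the Keller-Yang equivalence $\C(\Gamma_{\RT'}) \isom \C(\Gamma_\RT)$ (recalled in \S\ref{sec:C}), under which $\bfc_{\Gamma_{\RT'}}$ maps to the mutation of $\bfc_{\Gamma_\RT}$, I get the local picture: a flip on the surface side lifts to a mutation on the cluster side, and both operations are governed by the same combinatorial rule.

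\emph{Step 3: Extension to all arcs via walking.} Given an arbitrary open arc $\gamma$ on $\surf$, complete it to some triangulation $\RT_\gamma$ containing $\gamma$. Since the flip graph of a marked surface (without punctures) is connected (cf. \cite{FST}), there is a finite sequence of flips taking $\RT$ to $\RT_\gamma$. Applying Step 2 iteratively yields a corresponding sequence of mutations producing a cluster tilting set $\bfc_{\RT_\gamma}$ in $\C(\Gamma_\RT)$; I then define $M_\gamma$ to be the indecomposable summand of $\bfc_{\RT_\gamma}$ indexed by the arc $\gamma$.

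\emph{Step 4: Well-definedness and bijection.} The main obstacle, as usual in this type of argument, is showing that $M_\gamma$ does not depend on the choices of completion $\RT_\gamma$ and of flip sequence. I would argue this by induction: if two triangulations $\RT_1, \RT_2$ both contain $\gamma$, any flip sequence between them can be chosen to fix $\gamma$ throughout (flips never touch $\gamma$ itself, only the other arcs), and each such flip is a mutation at a vertex distinct from the one indexing $M_\gamma$, so the summand $M_\gamma$ is preserved. Surjectivity onto rigid indecomposables then follows because every cluster tilting set reachable from $\bfc_\RT$ by iterated mutation corresponds (via Step 2 in reverse, using the connectedness of $\uEG(\surf)$ and $\uCEG(\Gamma_\RT)$) to a triangulation of $\surf$, so every rigid indecomposable appearing in such a cluster is some $M_\gamma$. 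Injectivity follows because distinct arcs in a common completion give distinct summands, and distinct completions are linked by flips respecting the correspondence. Finally, the induced map $\uEG(\surf) \to \uCEG(\Gamma_\RT)$ is a morphism of $n$-regular graphs (both being $n$-regular by Proposition~\ref{pp:FST} and Definition~\ref{def:CEG}) that is surjective on vertices and preserves adjacency in both directions, hence an isomorphism.
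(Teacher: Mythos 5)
The paper disposes of this statement by citing Br\"ustle--Zhang directly (their Corollary~1.6 for the bijection between arcs and rigid indecomposables, and their Theorem~4.4 for the compatibility of flips with mutation), so your attempt to rebuild the bijection from scratch by path-lifting through the flip graph is a genuinely different route. Steps 1 and 2 are fine. However, Steps 3 and 4 have real gaps. The first is well-definedness: before you may speak of \emph{the} summand $M_\gamma$ of $\bfc_{\RT_\gamma}$, you must know that the cluster tilting set attached to $\RT_\gamma$ does not depend on the flip path chosen from $\RT$. Your argument only compares two triangulations containing $\gamma$ along a path fixing $\gamma$; it does not exclude that two different flip paths from the basepoint to the \emph{same} triangulation end at different cluster tilting sets. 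Killing this monodromy requires that every loop in $\uEG(\surf)$ decomposes into squares and pentagons (Theorem~\ref{thm:FST45}) and that squares and pentagons lift to $\uCEG(\Gamma_\RT)$ (Remark~\ref{rem:s and p}); you never invoke either.

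The second gap is that the theorem asserts a bijection with \emph{all} rigid indecomposables of $\C(\Gamma_\RT)$, while your construction only ever produces indecomposables lying in cluster tilting sets reachable from $\bfc_\RT$. That every rigid indecomposable arises this way is the substantive representation-theoretic content of Br\"ustle--Zhang's result (it rests on the classification of indecomposables of $\C(\Gamma_\RT)$ by strings, i.e.\ curves on the surface) and cannot be extracted from mutation combinatorics alone. Relatedly, injectivity is not established: ``distinct arcs in a common completion give distinct summands'' says nothing about two arcs admitting no common completion, and the closing inference ``surjective on vertices and a local bijection on edges, hence an isomorphism'' is false for graphs --- the universal cover of an $n$-regular graph has exactly these properties without being an isomorphism. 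Some independent means of telling the objects $M_\gamma$ apart (their strings, their indices/$g$-vectors, or their cluster characters matched against the Fomin--Shapiro--Thurston bijection between arcs and cluster variables) is unavoidable, which is in effect why the paper delegates the whole statement to \cite{BZ}.
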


\begin{proof}
See \cite[Corollary~1.6]{BZ} for the isomorphism on vertices
and \cite[Theorem~4.4]{BZ} for the isomorphism on edges.
\end{proof}

\subsection{The exchange groupoid of a surface} \label{sec:EGS}

Just as the exchange graph $\uCEG(\Gamma)$ can be enhanced
to the exchange groupoid $\ceg(\Gamma)$, as in Definition~\ref{def:ceg},
we may enhance $\uEG(\surf)$, which we can think of as a special case,
following Theorem~\ref{thm:BZ}.

In this case, doubling the edges to form the \emph{oriented exchange graph} $\EG(\surf)$
has a geometric interpretation, whose significance will become apparent shortly when we decorate the surface.
More precisely, we consider a \emph{forward flip} to move the endpoints of an open arc~$\gamma$ anti-clockwise
along two sides of the quadrilateral with diagonal~$\gamma$.
Thus each ordinary flip $T - T'$ can be realised by either of two forward flips
$\RT\to\RT'$ or $\RT'\to\RT$, as shown in Figure~\ref{fig:F2-1}.
These are not inverse to each other;
the inverse of a forward flip is a \emph{backward flip}, which moves the endpoints clockwise.

\begin{figure}[ht]\centering
\begin{tikzpicture}[scale=.8, rotate=180]
\draw (0,0)node {$\bullet$} rectangle (2,2)node {$\bullet$};
\draw (2,0)node {$\bullet$} to (0,2)node {$\bullet$};
\draw[blue,->,>=stealth](3-.6,1.3)to(3+.6,1.3);
\draw[blue](3-.25,1.5+.5)rectangle(3+.25,1.5);
\draw[blue,->,>=stealth](3-.25,1.5+.5)to(3-.25,1.5+.15);
\draw[blue,->,>=stealth](3+.25,1.5)to(3+.25,1.5+.35);

\draw[blue,<-,>=stealth](3-.6,.7)to(3+.6,.7);
\draw[blue](3-.25,.5-.5)rectangle(3+.25,.5);
\draw[blue,->,>=stealth](3-.25,.5-.5)to(3+.1,.5-.5);
\draw[blue,->,>=stealth](3+.25,.5)to(3-.1,.5);

\draw (3+1,0)node {$\bullet$} to (5+1,2)node {$\bullet$};
\draw (3+1,2)node {$\bullet$} rectangle (5+1,0)node {$\bullet$};
\end{tikzpicture}
\caption{An ordinary flip becomes two forward flips}
\label{fig:F2-1}
\end{figure}
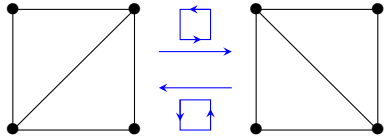

\begin{definition} \label{def:gpd}
The \emph{exchange groupoid} $\eg(\surf)$ of an (unpunctured) marked surface $\surf$
is the quotient of the path groupoid of $\EG(\surf)$ by the following relations:
\begin{itemize}
\item Any square in $\uEG(\surf)$, as on the left of Figure~\ref{fig:Pent},
induces four oriented squares in $\EG(\surf)$,
one starting at each triangulation,
as on the left of Figure~\ref{fig:Pent.rel}.
The square relation is the commutation relation between the two paths from source to sink in this figure.
\item Any pentagon in $\uEG(\surf)$, as on the right of Figure~\ref{fig:Pent},
induces five oriented pentagons in $\EG(\surf)$,
one starting at each triangulation,
as on on the right of Figure~\ref{fig:Pent.rel}.
The pentagon relation is the commutation relation between the two paths from source to sink in this figure.
\item Any length three interval in $\uEG(\surf)$, as in Figure~\ref{fig:flat-hex},
induces a flat hexagon in $\EG(\surf)$, as in Figure~\ref{fig:hexagon}
(cf. \eqref{eq:hex}).
The hexagonal dumbbell relation is the commutation relation
between the two paths from source to sink in this figure.
As before, the thick green edges are glued to form an annulus.
\end{itemize}
\end{definition}

\begin{figure}[ht]\centering
\begin{tikzpicture}[scale=.44,rotate=180]
  \foreach \j in {0,8}
  \foreach \k in {0,5}
  {
    \draw (0+\j,\k+0)node {$\bullet$} rectangle (2+\j,\k+2)node {$\bullet$};
    \draw (2+\j,\k+0)node {$\bullet$}  (0+\j,\k+2)node {$\bullet$};
    \draw (0+2.5+\j,\k+0)node {$\bullet$} rectangle (2+2.5+\j,\k+2)node {$\bullet$};
    \draw (2+2.5+\j,\k+0)node {$\bullet$}  (0+2.5+\j,\k+2)node {$\bullet$};
  }
\draw (0+2.5,0)to(2+2.5,2);\draw (2,0)to(0,2);
\draw (8+0+2.5,0)to(8+2+2.5,2);\draw (8+2,2)to(8+0,0);
\draw (8+0,5+0)to(8+2,5+2);\draw (8+2+2.5,5+0)to(8+0+2.5,5+2);
\draw (0+2.5,5+2)to(2+2.5,5);\draw (2,5+0)to(0,5+2);

\draw[blue,<-,>=stealth](2.25+4-1,1)to(2.25+4+1,1);
\draw[blue](2.25+4-.6,1-.3-.2)rectangle(2.25+4-.1,1-.8-.2);
\draw[blue](2.25+4+.6,1-.3-.2)rectangle(2.25+4+.1,1-.8-.2);
\draw[blue](2.25+4+.6,1-.3-.2)to(2.25+4+.1,1-.8-.2);
\draw[blue,-<-=.7,>=stealth](2.25+4-.6,1-.3-.2)to(2.25+4-.1,1-.3-.2);
\draw[blue,->-=.7,>=stealth](2.25+4-.6,1-.8-.2)to(2.25+4-.1,1-.8-.2);

\draw[blue](2.25+4-.6,1-.3+6.2)rectangle(2.25+4-.1,1-.8+6.2);
\draw[blue](2.25+4+.6,1-.3+6.2)rectangle(2.25+4+.1,1-.8+6.2);
\draw[blue](2.25+4+.6,1-.8+6.2)to(2.25+4+.1,1-.3+6.2);
\draw[blue,-<-=.7,>=stealth](2.25+4-.6,1-.3+6.2)to(2.25+4-.1,1-.3+6.2);
\draw[blue,->-=.7,>=stealth](2.25+4-.6,1-.8+6.2)to(2.25+4-.1,1-.8+6.2);

\draw[blue](2.25+4-.6+5.0,1-.25+2.5)rectangle(2.25+4-.1+5.0,1+.25+2.5);
\draw[blue](2.25+4+.6+5.0,1-.25+2.5)rectangle(2.25+4+.1+5.0,1+.25+2.5);
\draw[blue](2.25+4-.6+5.0,1-.25+2.5)to(2.25+4-.1+5.0,1+.25+2.5);
\draw[blue,-<-=.7,>=stealth](2.25+4+.6+5.0,1-.25+2.5)to(2.25+4+.1+5.0,1-.25+2.5);
\draw[blue,->-=.7,>=stealth](2.25+4+.6+5.0,1+.25+2.5)to(2.25+4+.1+5.0,1+.25+2.5);

\draw[blue](2.25+4-.6-5.0,1-.25+2.5)rectangle(2.25+4-.1-5.0,1+.25+2.5);
\draw[blue](2.25+4+.6-5.0,1-.25+2.5)rectangle(2.25+4+.1-5.0,1+.25+2.5);
\draw[blue](2.25+4-.6-5.0,1+.25+2.5)to(2.25+4-.1-5.0,1-.25+2.5);
\draw[blue,-<-=.7,>=stealth](2.25+4+.6-5.0,1-.25+2.5)to(2.25+4+.1-5.0,1-.25+2.5);
\draw[blue,->-=.7,>=stealth](2.25+4+.6-5.0,1+.25+2.5)to(2.25+4+.1-5.0,1+.25+2.5);

\draw[blue,<-,>=stealth](2.25+4-1,1+5)to(2.25+4+1,1+5);
\draw[blue,<-,>=stealth](2.25,1+2.5+1)to(2.25,1+2.5-1);
\draw[blue,->,>=stealth](2.25+8,1+2.5-1)to(2.25+8,1+2.5+1);
\end{tikzpicture}
\quad
\begin{tikzpicture}[xscale=.35,yscale=.45]
  \foreach \j in {0, 8, 16}
  {
    \draw(0+\j,0)node{$\bullet$}to(2+\j,0)node{$\bullet$}to(3+\j,2)node{$\bullet$}
    to(1+\j,3)node{$\bullet$}to(-1+\j,2)node{$\bullet$}to(0+\j,0);
  }
  \foreach \j in {4, 12}
  {
    \draw(0+\j,0-6)node{$\bullet$}to(2+\j,0-6)node{$\bullet$}to(3+\j,2-6)node{$\bullet$}
    to(1+\j,3-6)node{$\bullet$}to(-1+\j,2-6)node{$\bullet$}to(0+\j,0-6);
  }
\draw(0,0)to(1,3)to(2,0);
\draw(-1+8,2)to(2+8,0)to(1+8,3);
\draw(2+8+8,0)to(-1+8+8,2)to(3+8+8,2);
\draw(1+4,3-6)to(0+4,0-6)to(3+4,2-6);
\draw(0+12,0-6)to(3+12,2-6)to(-1+12,2-6);

\draw[blue,->,>=stealth] (1+4-1.5,1.5)to(1+4+1.5,1.5);
\draw[blue,->,>=stealth] (1+12-1.5,1.5)to(1+12+1.5,1.5);
\draw[blue,->,>=stealth] (1+2-1,1-3+1.5)to(1+2+.5,1-3-1);
\draw[blue,->,>=stealth] (1+6+8-.5,1-3-1)to(1+6+8+1,1-3+1.5);
\draw[blue,->,>=stealth] (1+8-1.5,1.5-6)to(1+8+1.5,1.5-6);

    \path (1+4,1.5+1) coordinate (v);

\draw[xshift=4.5cm,yshift=2cm,blue,>=stealth]
    (0,0)to(2*.4,0)to(3*.4,2*.4)to(1*.4,3*.4)to(-1*.4,2*.4)to(0,0)
    (.8,0)to(.4,1.2)
    (0,0)edge[->-=.7](.8,0) (.4,1.2)edge[->-=.7](-.4,.8);
\draw[xshift=12.5cm,yshift=2cm,blue,>=stealth]
    (0,0)to(2*.4,0)to(3*.4,2*.4)to(1*.4,3*.4)to(-1*.4,2*.4)to(0,0)
    (.8,0)to(-.4,.8)
    (1.2,.8)edge[-<-=.7](.8,0) (.4,1.2)edge[->-=.7](-.4,.8);
\draw[xshift=1cm,yshift=-2.5cm,blue,>=stealth]
    (0,0)to(2*.4,0)to(3*.4,2*.4)to(1*.4,3*.4)to(-1*.4,2*.4)to(0,0)
    (1.2,.8)edge[-<-=.7](.8,0) (.4,1.2)edge[->-=.7](0,0);
\draw[xshift=16.2cm,yshift=-2.5cm,blue,>=stealth]
    (0,0)to(2*.4,0)to(3*.4,2*.4)to(1*.4,3*.4)to(-1*.4,2*.4)to(0,0)
    (0,0)edge[->-=.7](.8,0) (-.4,.8)edge[-<-=.5](1.2,.8);
\draw[xshift=8.5cm,yshift=-6cm,blue,>=stealth]
    (0,0)to(2*.4,0)to(3*.4,2*.4)to(1*.4,3*.4)to(-1*.4,2*.4)to(0,0)
    (1.2,.8)edge[-<-=.7](0,0) (.4,1.2)edge[->-=.7](-.4,.8);
\end{tikzpicture}
\caption{The square and pentagon relation for $\eg(\surf)$}
\label{fig:Pent.rel}
\end{figure}
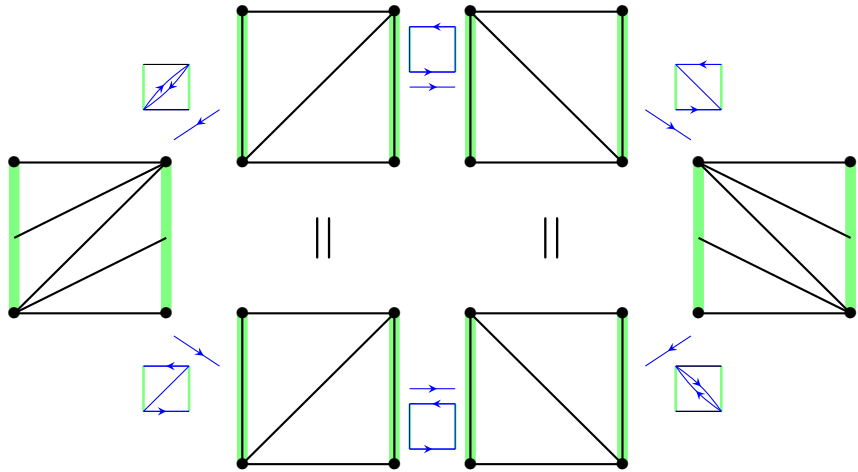
\begin{figure}[hb]\centering
\begin{tikzpicture}[scale=1, rotate=0]

  \foreach \j/\k in {0/0, -3/2, 0/4, 3/0, 6/2, 3/4}
  { \draw[gluedge] (\j,\k+0) to (\j,\k+2);
    \draw[gluedge] (\j+2,\k+2) to (\j+2,\k);
    \draw[thick] (\j+0,\k+0)to(\j+2,\k)  (\j+2,\k+2)to(\j,\k+2);}

  \foreach \j/\k in {0/0, 0/4, 3/0, 3/4}
  {\draw[thick] (\j+0,\k+0)to(\j,\k+2)  (\j+2,\k+0)to(\j+2,\k+2);}

  \foreach \j/\k in {0/0, -3/2, 0/4}
  {\draw[thick] (\j+0,\k+0)to(\j+2,\k+2);}

  \foreach \j/\k in {3/0, 6/2, 3/4}
  {\draw[thick] (\j+2,\k+0)to(\j+0,\k+2);}

  \foreach \j/\k in {-3/2}
  {\draw[thick] (\j+0,\k+0)to(\j+2,\k+1) (\j+0,\k+1)to(\j+2,\k+2);}

  \foreach \j/\k in {6/2}
  {\draw[thick] (\j+0,\k+2)to(\j+2,\k+1) (\j+0,\k+1)to(\j+2,\k+0);}

\foreach \j/\k in {0/0, -3/2, 0/4, 3/0, 6/2, 3/4}
  {\draw(\j,\k)node {$\bullet$}(\j,\k+2)node {$\bullet$}
        (\j+2,\k+0)node {$\bullet$}(\j+2,\k+2)node {$\bullet$};}

\draw[blue,->-=.6,>=stealth] (2.5-.3,1)to(2.5+.3,1);
\draw[blue,->-=.6,>=stealth] (2.5-.3,1+4)to(2.5+.3,1+4);
\draw[blue,-<-=.5,>=stealth] (-0.6+.3,1.5-.2)to(-0.6-.3,1.5+.2);
\draw[blue,->-=.5,>=stealth] (-0.6+.3,4.5+.2)to(-0.6-.3,4.5-.2);
\draw[blue,-<-=.5,>=stealth] (5.6+.3,4.5-.2)to(5.6-.3,4.5+.2);
\draw[blue,->-=.5,>=stealth] (5.6+.3,1.5+.2)to(5.6-.3,1.5-.2);

  \foreach \j/\k in {2.5/0.5, 2.5/5.5, -1/1, -1/5, 6/1, 6/5}
  {
    \draw[line width=1pt,green!50,>=stealth] (\j-.3,\k-.3)to(\j-.3,\k+.3);
    \draw[line width=1pt,green!50,>=stealth] (\j+.3,\k-.3)to(\j+.3,\k+.3);
    \draw[blue] (\j-.3,\k-.3)to(\j+.3,\k-.3)
        (\j+.3,\k+.3)to(\j-.3,\k+.3);
  }
  \foreach \j/\k in {2.5/0.5, 2.5/5.5}
    {\draw[blue,->-=.5,>=stealth](\j+.3,\k+.3)to(\j-.3,\k+.3);
    \draw[blue,->-=.5,>=stealth](\j-.3,\k-.3)to(\j+.3,\k-.3);
    \draw[blue](\j-.3,\k-.3)to(\j-.3,\k+.3)
        (\j+.3,\k-.3)to(\j+.3,\k+.3);}

  \foreach \j/\k in {-1/1}
    {\draw[blue,->-=.5,>=stealth](\j+.3,\k+.3)to(\j-.3,\k+.3);
    \draw[blue,->-=.5,>=stealth](\j-.3,\k-.3)to(\j+.3,\k-.3);
    \draw[blue](\j-.3,\k-.3)to(\j+.3,\k+.3);}

  \foreach \j/\k in {-1/5}
    {\draw(\j+.3,\k+.3)to(\j-.3,\k+.3)(\j-.3,\k-.3)to(\j+.3,\k-.3);
    \draw[blue,->-=.5,>=stealth](\j-.3,\k-.3)to[bend left=10](\j+.3,\k+.3);
    \draw[blue,->-=.5,>=stealth](\j+.3,\k+.3)to[bend left=10](\j-.3,\k-.3);}

  \foreach \j/\k in {6/5}
    {\draw[blue,->-=.5,>=stealth](\j+.3,\k+.3)to(\j-.3,\k+.3);
    \draw[blue,->-=.5,>=stealth](\j-.3,\k-.3)to(\j+.3,\k-.3);
    \draw[blue](\j+.3,\k-.3)to(\j-.3,\k+.3);}

  \foreach \j/\k in {6/1}
    {\draw(\j+.3,\k+.3)to(\j-.3,\k+.3)(\j-.3,\k-.3)to(\j+.3,\k-.3);
    \draw[blue,->-=.5,>=stealth](\j-.3,\k+.3)to[bend left=10](\j+.3,\k-.3);
    \draw[blue,->-=.5,>=stealth](\j+.3,\k-.3)to[bend left=10](\j-.3,\k+.3);}

\draw(1,3)node[rotate=-90,ultra thick]{\Huge{$=$}};
\draw(4,3)node[rotate=-90,ultra thick]{\Huge{$=$}};

\end{tikzpicture}
\caption{The hexagonal dumbbell relation (in a local annulus) for $\eg(\surf)$}
\label{fig:hexagon}
\end{figure}

\begin{remark}\label{rem:compat}
The three cases in Definition~\ref{def:gpd} correspond precisely
to the three cases for a pair of open arcs described at the end of \S\ref{sec:mark-surf}.
Hence, for any triangulation, each pair of open arcs in it will determine one
of the three exchange groupoid relations starting at that triangulation.
However, just as in Remark~\ref{rem:s and p}, if a square or pentagon relation holds,
then a further hexagonal dumbbell relation will be implied.

Indeed, if we associate the quiver $Q_\RT$ to a triangulation $\RT$,
then the square, pentagon and hexagonal dumbbell relations of Definition~\ref{def:gpd}, starting at $\RT$,
coincide with the corresponding relations of Definition~\ref{def:ceg}, starting at $Q_\RT$.
Hence the isomorphism $\EG(\surf)\cong\CEG(\Gamma_\RT)$ implied by Theorem~\ref{thm:BZ}
induces an isomorphism $\eg(\surf)\cong\ceg(\Gamma_\RT)$,
so that $\eg(\surf)$ is a special case of a cluster exchange groupoid.
\end{remark}

To conclude this subsection, we recall the following classical topological result
(see \cite[Thm~3.10]{FST} for the statement and attributions).

\begin{theorem}\label{thm:FST45}
The fundamental group of the exchange graph $\uEG(\surf)$ of (ideal) triangulations
is generated by squares and pentagons.
\end{theorem}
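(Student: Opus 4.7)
The plan is to exhibit a contractible auxiliary space whose dual cell complex has $\uEG(\surf)$ as its $1$-skeleton, with square and pentagon $2$-cells arising geometrically from the cell decomposition.

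Concretely, I would invoke a contractible parameter space for weighted ideal triangulations of $\surf$ --- for instance, a suitably adapted Penner-style decorated Teichm\"uller space, or a Harer--Mumford cell complex of measured laminations. Such a space is a contractible real manifold equipped with a cell decomposition whose top-dimensional cells are indexed by ideal triangulations, whose codimension-$1$ faces correspond to flips, and whose codimension-$2$ faces correspond to partial triangulations missing exactly two arcs. By the trichotomy at the end of \S\ref{sec:mark-surf}, the link of a codimension-$2$ face is exactly one of: a $4$-cycle (square case), a $5$-cycle (pentagon case), or a bi-infinite line (annulus case). Dualizing, $\uEG(\surf)$ becomes the $1$-skeleton of a $2$-complex $X$ whose finite $2$-cells are precisely squares and pentagons, while annulus-type cells are infinite strips attached along contractible lines and hence contribute no loops to $\pi_1$.

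Since the auxiliary space is contractible, a standard nerve/duality argument then yields $\pi_1(X) = 1$, so every loop in $\uEG(\surf) \subseteq X^{(1)}$ is null-homotopic in $X$ via the attached squares and pentagons. Since $\pi_1$ of a graph is free, this is equivalent to saying that $\pi_1(\uEG(\surf))$ is generated, as a group, by the loops coming from squares and pentagons, which is the desired conclusion.

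The main obstacle is the choice and justification of the correct contractible auxiliary space in the unpunctured bordered setting --- Penner's original decorated Teichm\"uller space is built for cusped (punctured) surfaces, so one either adapts it via a boundary-to-cusp surgery trick, or proves contractibility of the relevant arc (or ribbon graph) complex directly using Hatcher's surgery-flow argument. One must also take care in the low-complexity base cases (e.g.\ a polygon, where the arc complex itself is a sphere of appropriate dimension rather than contractible) by checking the statement by direct inspection --- for the $n$-gon, $\uEG(\surf)$ is the $1$-skeleton of the associahedron, which is simply connected after adjoining the pentagonal and quadrilateral $2$-faces. A secondary subtlety is verifying that the non-compact annulus-case $2$-cells carry no hidden fundamental group, which reduces to the fact that a bi-infinite metric line is contractible.
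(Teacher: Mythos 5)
The paper does not prove this statement itself: it records it as a classical result and cites \cite[Thm~3.10]{FST} ``for the statement and attributions'' (to Harer, Hatcher and Penner), and the argument underlying those references is exactly the one you sketch --- contractibility of the cell decomposition of a bordered analogue of decorated Teichm\"uller space (equivalently, of the complex of filling arc systems), with $\uEG(\surf)$ dual to the codimension-$\leq 1$ skeleton and squares and pentagons dual to the codimension-$2$ faces, plus a separate check in the degenerate polygon cases where the arc complex is a sphere rather than contractible. One small correction to your handling of the third case: a pair of arcs bounding a local annulus does not determine a codimension-$2$ face at all, because deleting both arcs leaves a complementary annular region, so the arc system no longer fills the surface and the two codimension-$1$ walls only meet at infinity; hence there is no ``infinite $2$-cell'' whose contractibility needs checking --- a transverse null-homotopy simply never crosses such a stratum, and only square- and pentagon-type faces contribute relations, which is precisely the desired conclusion.
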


In other words, Condition~\ref{con:pi1triv} holds and so, by Proposition~\ref{pp:ass}, we have
\begin{gather}\label{eq:ct=pi}
    \CBr(\bfc_\RT)=\pi_1(\ceg(\Gamma_\RT),\bfc_\RT)\cong\pi_1(\eg(\surf),\RT).
\end{gather}

\subsection{Decorated marked surfaces} \label{sec:DMS}
Following Krammer \cite{Kr}, we can construct a covering of $\eg(\surf)$
by decorating the marked surface $\surf$.
Recall that any triangulation of $\surf$ consists of $\numtri=(2\numarc+m)/3$ triangles.
\begin{definition}\cite{QQ}\label{def:arcs}
A \emph{decorated marked surface} $\surfo$ is a marked surface $\surf$ together with
a fixed set $\Tri$ of $\numtri$ `decorating' points in the interior of $\surf$.
Moreover, \begin{itemize}
\item An \emph{open arc} in $\surfo$ is (the isotopy class of) a simple essential curve in
$\surfo\setminus\Tri$
that connects two marked points in $\M$.
\item A \emph{triangulation} $\T$ of $\surfo$ is an (isotopy class of)
maximal collection of compatible open arcs (i.e. no intersection in $\surf\smallsetminus\M$) dividing $\surfo$ into $\numtri$ triangles, each of which contains exactly one point in $\Tri$.
\item Forgeting the decorating points gives a \emph{forgetful map}
$F\colon\surfo\to\surf$,
which induces a map from the set of open arcs in $\surfo$
to the set of open arcs in $\surf$,
because isotopy in $\surfo$ implies isotopy in $\surf$.
\item The \emph{forward flip} of a triangulation $\T$ of $\surfo$ is defined, as in \S\ref{sec:EGS},
by moving the endpoints of an open arc $\gamma$ anticlockwise along the quadrilateral
to obtain a new open arc $\gamma^\sharp$, as shown in Figure~\ref{fig:flip.o}.
\item The \emph{exchange graph} $\EG(\surfo)$ is the oriented graph whose vertices
are triangulations of $\surfo$ and whose edges correspond to forward flips between them.
\end{itemize}
\end{definition}
\begin{figure}[ht]\centering
\begin{tikzpicture}[scale=.4]
    \path (-135:4) coordinate (v1)
          (-45:4) coordinate (v2)
          (45:4) coordinate (v3);
\draw[very thick](v1)to(v2)node{$\bullet$}to(v3);
    \path (-135:4) coordinate (v1)
          (45:4) coordinate (v2)
          (135:4) coordinate (v3);
\draw[Emerald, thick](v1)to(v2);
\draw[very thick](v2)node{$\bullet$}to(v3)node{$\bullet$}to
    (v1)node{$\bullet$}(45:1)node[above]{$\gamma$};
\draw[red,thick](135:1.333)node{\tiny{$\circ$}}(-45:1.333)node{\tiny{$\circ$}};
\end{tikzpicture}
\begin{tikzpicture}[scale=1.2, rotate=180]
\draw[blue,<-,>=stealth](3-.6,.7)to(3+.6,.7);
\draw(3,.7)node[below,black]{\footnotesize{in $\surfo$}};
\draw[blue](3-.25,.5-.5)rectangle(3+.25,.5);\draw(3,1.5)node{};
\draw[blue,->,>=stealth](3-.25,.5-.5)to(3+.1,.5-.5);
\draw[blue,->,>=stealth](3+.25,.5)to(3-.1,.5);
\end{tikzpicture}
\begin{tikzpicture}[scale=.4];
    \path (-135:4) coordinate (v1)
          (-45:4) coordinate (v2)
          (45:4) coordinate (v3);
\draw[,very thick](v1)to(v2)node{$\bullet$}to(v3)
(45:1)node[above right]{$\gamma^\sharp$};
    \path (-135:4) coordinate (v1)
          (45:4) coordinate (v2)
          (135:4) coordinate (v3);
\draw[Emerald,,thick](135:4).. controls +(-10:2) and +(45:3) ..(0,0)
                             .. controls +(-135:3) and +(170:2) ..(-45:4);
\draw[,very thick](v2)node{$\bullet$}to(v3)node{$\bullet$}to(v1)node{$\bullet$};
\draw[red,thick](135:1.333)node{\tiny{$\circ$}}(-45:1.333)node{\tiny{$\circ$}};
\end{tikzpicture}

\begin{tikzpicture}[scale=.4]
\draw[thick,>=stealth,->](0,5)to(0,3.6);\draw(0,4.3)node[left]{$^F$};
    \path (-135:4) coordinate (v1)
          (-45:4) coordinate (v2)
          (45:4) coordinate (v3);
\draw[,very thick](v1)to(v2)node{$\bullet$}to(v3);
    \path (-135:4) coordinate (v1)
          (45:4) coordinate (v2)
          (135:4) coordinate (v3);
\draw[,very thick](v2)node{$\bullet$}to(v3)node{$\bullet$}to(v1)node{$\bullet$};
\draw[>=stealth,,thick](-135:4)to(45:4) (45:1)node[above]{$\gamma$};
\end{tikzpicture}
\begin{tikzpicture}[scale=1.2, rotate=180]
\draw(3,1.5)node{}(3,.5)node[below]{\footnotesize{in $\surf$}};
\draw[blue,<-,>=stealth](3-.6,.5)to(3+.6,.5);;
\end{tikzpicture}
\begin{tikzpicture}[scale=.4]
\draw[thick,>=stealth,->](0,5)to(0,3.6);\draw(0,4.3)node[right]{$^F$};
    \path (-135:4) coordinate (v1)
          (-45:4) coordinate (v2)
          (45:4) coordinate (v3);
\draw[,very thick](v1)to(v2)node{$\bullet$}to(v3);
    \path (-135:4) coordinate (v1)
          (45:4) coordinate (v2)
          (135:4) coordinate (v3);
\draw[,very thick](v2)node{$\bullet$}to(v3)node{$\bullet$}to(v1)node{$\bullet$};
\draw[>=stealth,,thick](135:4)to(-45:4) (130:1)node[above]{$\gamma^\sharp$};;
\end{tikzpicture}
\caption{The forward flip in $\EG(\surfo)$ and $\EG(\surf)$}
\label{fig:flip.o}
\end{figure}

\newcommand{\FM}{F_{\mathrm{M}}}

The mapping class group $\MCG(\surfo)$ is the group of isotopy classes of diffeomorphisms of $\surfo$,
where all diffeomorphisms and isotopies fix $\M$ and $\Tri$ setwise.
On the other hand, the mapping class group $\MCG(\surf)$  fixes just $\M$ setwise.
Thus there is a forgetful group homomorphism
\begin{equation}\label{eq:FM}
\FM\colon\MCG(\surfo)\to\MCG(\surf).
\end{equation}
See Lemma~\ref{lem:SBr} for more about this map and its kernel.

\begin{definition}\label{def:gpd.b}
The \emph{exchange groupoid} $\eg(\surfo)$
is the quotient of the path groupoid of $\EG(\surfo)$ by the following
relations, starting at any triangulation $\T$ in $\EG(\surfo)$:
\begin{itemize}
\item if two open arcs are not adjacent
in any triangle of $\T$, then
the forward flips with respect to them form a square in $\EG(\surfo)$, as in Figure~\ref{fig:4+},
and we impose the commuting square relation;
\item if two open arcs are adjacent
in one triangle of $\T$, then they induce an oriented pentagon in $\EG(\surfo)$, as in Figure~\ref{fig:5+},
and we impose the corresponding commuting pentagon relation;
\item if two open arcs are adjacent
in two triangles of $\T$,
then they induce an oriented hexagon in $\EG(\surfo)$, as in Figure~\ref{fig:6+},
and we impose the corresponding commuting hexagon relation.
\end{itemize}
\end{definition}

\begin{figure}[ht]\centering
\begin{tikzpicture}[scale=.6,rotate=180]
\draw[Emerald,thick] (0+2.5,0)to(2+2.5,2)
    (2,0).. controls +(170:4) and +(-10:4) ..(0,2)
    (8+0+2.5,0)to(8+2+2.5,2)
    (8+2,2)to(8+0,0)
    (8+0,5+0)to(8+2,5+2)
    (8+2+2.5,5+0).. controls +(170:4) and +(-10:4) ..(8+0+2.5,5+2)
    (0+2.5,5+2).. controls +(-10:4) and +(170:4) ..(2+2.5,5)
    (2,5+0).. controls +(170:4) and +(-10:4) ..(0,5+2);
\foreach \j in {0,8}
  \foreach \k in {0,5}
  { \draw (0+\j,\k+0)node {$\bullet$} rectangle (2+\j,\k+2)node {$\bullet$};
    \draw (2+\j,\k+0)node {$\bullet$}  (0+\j,\k+2)node {$\bullet$};
    \draw (0+2.5+\j,\k+0)node {$\bullet$} rectangle (2+2.5+\j,\k+2)node {$\bullet$};
    \draw (2+2.5+\j,\k+0)node {$\bullet$}  (0+2.5+\j,\k+2)node {$\bullet$};
    \draw[red] (0.667+\j,\k+1.333)node {$_\circ$}(1.333+\j,\k+0.667)node {$_\circ$}
               (0.667+2.5+\j,\k+1.333)node {$_\circ$}(1.333+2.5+\j,\k+0.667)node {$_\circ$}; }
\draw[blue,<-,>=stealth](2.25+4-1,1)to(2.25+4+1,1);
\draw[blue](2.25+4-.6,1-.3-.2)rectangle(2.25+4-.1,1-.8-.2);
\draw[blue](2.25+4+.6,1-.3-.2)rectangle(2.25+4+.1,1-.8-.2);
\draw[blue](2.25+4+.6,1-.3-.2)to(2.25+4+.1,1-.8-.2);
\draw[blue,-<-=.7,>=stealth](2.25+4-.6,1-.3-.2)to(2.25+4-.1,1-.3-.2);
\draw[blue,->-=.7,>=stealth](2.25+4-.6,1-.8-.2)to(2.25+4-.1,1-.8-.2);

\draw[blue](2.25+4-.6,1-.3+6.2)rectangle(2.25+4-.1,1-.8+6.2);
\draw[blue](2.25+4+.6,1-.3+6.2)rectangle(2.25+4+.1,1-.8+6.2);
\draw[blue](2.25+4+.6,1-.8+6.2)to(2.25+4+.1,1-.3+6.2);
\draw[blue,-<-=.7,>=stealth](2.25+4-.6,1-.3+6.2)to(2.25+4-.1,1-.3+6.2);
\draw[blue,->-=.7,>=stealth](2.25+4-.6,1-.8+6.2)to(2.25+4-.1,1-.8+6.2);

\draw[blue](2.25+4-.6+5.0,1-.25+2.5)rectangle(2.25+4-.1+5.0,1+.25+2.5);
\draw[blue](2.25+4+.6+5.0,1-.25+2.5)rectangle(2.25+4+.1+5.0,1+.25+2.5);
\draw[blue](2.25+4-.6+5.0,1-.25+2.5)to(2.25+4-.1+5.0,1+.25+2.5);
\draw[blue,-<-=.7,>=stealth](2.25+4+.6+5.0,1-.25+2.5)to(2.25+4+.1+5.0,1-.25+2.5);
\draw[blue,->-=.7,>=stealth](2.25+4+.6+5.0,1+.25+2.5)to(2.25+4+.1+5.0,1+.25+2.5);

\draw[blue](2.25+4-.6-5.0,1-.25+2.5)rectangle(2.25+4-.1-5.0,1+.25+2.5);
\draw[blue](2.25+4+.6-5.0,1-.25+2.5)rectangle(2.25+4+.1-5.0,1+.25+2.5);
\draw[blue](2.25+4-.6-5.0,1+.25+2.5)to(2.25+4-.1-5.0,1-.25+2.5);
\draw[blue,-<-=.7,>=stealth](2.25+4+.6-5.0,1-.25+2.5)to(2.25+4+.1-5.0,1-.25+2.5);
\draw[blue,->-=.7,>=stealth](2.25+4+.6-5.0,1+.25+2.5)to(2.25+4+.1-5.0,1+.25+2.5);

\draw[blue,<-,>=stealth](2.25+4-1,1+5)to(2.25+4+1,1+5);
\draw[blue,<-,>=stealth](2.25,1+2.5+1)to(2.25,1+2.5-1);
\draw[blue,->,>=stealth](2.25+8,1+2.5-1)to(2.25+8,1+2.5+1);
\end{tikzpicture}
\caption{The square relation for $\eg(\surfo)$}
\label{fig:4+}
\end{figure}

\begin{figure}[ht]\centering
\begin{tikzpicture}[xscale=.5,yscale=.61]
\draw[Emerald,thick](0,0)to(1,3)to(2,0)
    (-1+8,2).. controls +(5:3) and +(170:3) ..(2+8,0)to(1+8,3)
    (2+8+8,0).. controls +(170:3) and +(5:3) ..(-1+8+8,2).. controls +(15:3.5) and +(-135:2.5) ..(3+8+8,2)
    (1+4,3-6)to(0+4,0-6).. controls +(65:3.6) and +(-120:3) ..(3+4,2-6)
    (0+12,0-6).. controls +(65:3.6) and +(-120:3) ..(3+12,2-6)
    .. controls +(-135:2.5) and +(15:3.5) ..(-1+12,2-6);
  \foreach \j in {0, 8, 16}
  { \draw[thick](0+\j,0)node[]{$\bullet$}to(2+\j,0)node[]{$\bullet$}to
        (3+\j,2)node[]{$\bullet$}
    to(1+\j,3)node[]{$\bullet$}to(-1+\j,2)node[]{$\bullet$}to(0+\j,0);
    \draw(1+\j,1)node[red]{$\circ$}(\j,1.666)node[red]{$\circ$}(2+\j,1.666)node[red]{$\circ$}; }
  \foreach \j in {4, 12}
  { \draw[thick](0+\j,0-6)node[]{$\bullet$}to
        (2+\j,0-6)node[]{$\bullet$}to(3+\j,2-6)node[]{$\bullet$}
    to(1+\j,3-6)node[]{$\bullet$}to(-1+\j,2-6)node[]{$\bullet$}to(0+\j,0-6);
    \draw(1+\j,1-6)node[red]{$\circ$}(\j,1.666-6)node[red]{$\circ$}(2+\j,1.666-6)node[red]{$\circ$}; }

\draw[blue,->,>=stealth] (1+4-1.5,1.5)to(1+4+1.5,1.5);
\draw[blue,->,>=stealth] (1+12-1.5,1.5)to(1+12+1.5,1.5);
\draw[blue,->,>=stealth] (1+2-1,1-3+1.5)to(1+2+.5,1-3-1);
\draw[blue,->,>=stealth] (1+6+8-.5,1-3-1)to(1+6+8+1,1-3+1.5);
\draw[blue,->,>=stealth] (1+8-1.5,1.5-6)to(1+8+1.5,1.5-6);

    \path (1+4,1.5+1) coordinate (v);

\draw[xshift=4.5cm,yshift=2cm,blue,>=stealth]
    (0,0)to(2*.4,0)to(3*.4,2*.4)to(1*.4,3*.4)to(-1*.4,2*.4)to(0,0)
    (.8,0)to(.4,1.2)
    (0,0)edge[->-=.7](.8,0) (.4,1.2)edge[->-=.7](-.4,.8);
\draw[xshift=12.5cm,yshift=2cm,blue,>=stealth]
    (0,0)to(2*.4,0)to(3*.4,2*.4)to(1*.4,3*.4)to(-1*.4,2*.4)to(0,0)
    (.8,0)to(-.4,.8)
    (1.2,.8)edge[-<-=.7](.8,0) (.4,1.2)edge[->-=.7](-.4,.8);
\draw[xshift=1cm,yshift=-2.5cm,blue,>=stealth]
    (0,0)to(2*.4,0)to(3*.4,2*.4)to(1*.4,3*.4)to(-1*.4,2*.4)to(0,0)
    (1.2,.8)edge[-<-=.7](.8,0) (.4,1.2)edge[->-=.7](0,0);
\draw[xshift=16.2cm,yshift=-2.5cm,blue,>=stealth]
    (0,0)to(2*.4,0)to(3*.4,2*.4)to(1*.4,3*.4)to(-1*.4,2*.4)to(0,0)
    (0,0)edge[->-=.7](.8,0) (-.4,.8)edge[-<-=.5](1.2,.8);
\draw[xshift=8.5cm,yshift=-6cm,blue,>=stealth]
    (0,0)to(2*.4,0)to(3*.4,2*.4)to(1*.4,3*.4)to(-1*.4,2*.4)to(0,0)
    (1.2,.8)edge[-<-=.7](0,0) (.4,1.2)edge[->-=.7](-.4,.8);
\end{tikzpicture}
\caption{The pentagon relation for $\eg(\surfo)$}
\label{fig:5+}
\end{figure}

\begin{figure}\centering
\begin{tikzpicture}[scale=1.1]

  \foreach \j/\k in {0/0, -3/2, 0/4, 3/0, 6/2, 3/4}
  { \draw[gluedge] (\j,\k+0) to (\j,\k+2);
    \draw[gluedge] (\j+2,\k+2) to (\j+2,\k);
    \draw[thick] (\j+0,\k+0)to(\j+2,\k)  (\j+2,\k+2)to(\j,\k+2); }

\draw[Emerald,thick]
    plot [smooth,tension=1] coordinates {(2,0)(.7,.5)(2,1)}
    plot [smooth,tension=1] coordinates {(0,2)(1.3,1.5)(0,1)}
    (0,0) to (2,2)

    plot [smooth,tension=1] coordinates {(2+3,0)(.7+3,.5)(2+3,1)}
    plot [smooth,tension=1] coordinates {(0+3,2)(1.3+3,1.5)(0+3,1)}
    plot [smooth,tension=1] coordinates {(0+3,2)(1.5+3,1.5)(.5+3,.5)(2+3,0)}

    plot [smooth,tension=.8] coordinates {(6,4)(7.3,3.5)(6.7,2.5)(8,2)}
    plot [smooth,tension=.8] coordinates {(6,4)(7.5,3.7)(8,3)}
    plot [smooth,tension=.8] coordinates {(8,2)(6.5,2.3)(6,3)}

    (-3,2) to [bend left=12](-1,3) (-3,3) to[bend right=12] (-1,4) to (-3,2)

    (0,6) to (0,4) to (2,6) to (2,4)

    plot [smooth,tension=.8] coordinates {(0+3,2+4)(1.3+3,1.5+4)(.7+3,.5+4)(2+3,0+4)}
    (3,6) to (3,4) (5,6) to (5,4);

\foreach \j/\k in {0/0, -3/2, 0/4, 3/0, 6/2, 3/4}
  {\draw(\j,\k)node {$\bullet$}(\j,\k+2)node {$\bullet$}
        (\j+2,\k+0)node {$\bullet$}(\j+2,\k+2)node {$\bullet$}
  (\j+1,\k+1.5)node[red]{$\circ$}(\j+1,\k+.5)node[red]{$\circ$};}

\draw[blue,->-=.6,>=stealth] (2.5-.3,1)to(2.5+.3,1);
\draw[blue,->-=.6,>=stealth] (2.5-.3,1+4)to(2.5+.3,1+4);
\draw[blue,-<-=.5,>=stealth] (-0.6+.3,1.5-.2)to(-0.6-.3,1.5+.2);
\draw[blue,->-=.5,>=stealth] (-0.6+.3,4.5+.2)to(-0.6-.3,4.5-.2);
\draw[blue,-<-=.5,>=stealth] (5.6+.3,4.5-.2)to(5.6-.3,4.5+.2);
\draw[blue,->-=.5,>=stealth] (5.6+.3,1.5+.2)to(5.6-.3,1.5-.2);

  \foreach \j/\k in {2.5/0.5, 2.5/5.5, -1/1, -1/5, 6/1, 6/5}
  { \draw[line width=1pt,green!50,>=stealth] (\j-.3,\k-.3)to(\j-.3,\k+.3);
    \draw[line width=1pt,green!50,>=stealth] (\j+.3,\k-.3)to(\j+.3,\k+.3);
    \draw[blue] (\j-.3,\k-.3)to(\j+.3,\k-.3)
        (\j+.3,\k+.3)to(\j-.3,\k+.3); }

  \foreach \j/\k in {2.5/0.5, 2.5/5.5}
    {\draw[blue,->-=.5,>=stealth](\j+.3,\k+.3)to(\j-.3,\k+.3);
    \draw[blue,->-=.5,>=stealth](\j-.3,\k-.3)to(\j+.3,\k-.3);
    \draw[blue](\j-.3,\k-.3)to(\j-.3,\k+.3)
        (\j+.3,\k-.3)to(\j+.3,\k+.3);}

  \foreach \j/\k in {-1/1}
    {\draw[blue,->-=.5,>=stealth](\j+.3,\k+.3)to(\j-.3,\k+.3);
    \draw[blue,->-=.5,>=stealth](\j-.3,\k-.3)to(\j+.3,\k-.3);
    \draw[blue](\j-.3,\k-.3)to(\j+.3,\k+.3);}

  \foreach \j/\k in {-1/5}
    {\draw(\j+.3,\k+.3)to(\j-.3,\k+.3)(\j-.3,\k-.3)to(\j+.3,\k-.3);
    \draw[blue,->-=.5,>=stealth](\j-.3,\k-.3)to[bend left=10](\j+.3,\k+.3);
    \draw[blue,->-=.5,>=stealth](\j+.3,\k+.3)to[bend left=10](\j-.3,\k-.3);}

  \foreach \j/\k in {6/5}
    {\draw[blue,->-=.5,>=stealth](\j+.3,\k+.3)to(\j-.3,\k+.3);
    \draw[blue,->-=.5,>=stealth](\j-.3,\k-.3)to(\j+.3,\k-.3);
    \draw[blue](\j+.3,\k-.3)to(\j-.3,\k+.3);}

  \foreach \j/\k in {6/1}
    {\draw(\j+.3,\k+.3)to(\j-.3,\k+.3)(\j-.3,\k-.3)to(\j+.3,\k-.3);
    \draw[blue,->-=.5,>=stealth](\j-.3,\k+.3)to[bend left=10](\j+.3,\k-.3);
    \draw[blue,->-=.5,>=stealth](\j+.3,\k-.3)to[bend left=10](\j-.3,\k+.3);}

\end{tikzpicture}\caption{The hexagon relation for $\eg(\surfo)$}
\label{fig:6+}
\end{figure}

\begin{lemma}\label{lem:Krammer}
The forgetful map $F$ induces a covering functor $F_*\colon\eg(\surfo)\to\eg(\surf)$,
which is a Galois covering with group $\ker \FM$.
\end{lemma}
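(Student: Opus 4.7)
The plan is to proceed in three stages: first define $F_*$ as a functor, then check that it is a covering of oriented graphs, and finally identify the deck group as $\ker\FM$. For the first stage, given a triangulation $\T=\{\gamma_1,\dots,\gamma_\numarc\}$ of $\surfo$, the collection $\{F(\gamma_1),\dots,F(\gamma_\numarc)\}$ consists of $\numarc$ pairwise compatible open arcs in $\surf$ dividing it into $\numtri$ triangles, so by Proposition~\ref{pp:FST} it is an ideal triangulation $F(\T)$ of $\surf$. Furthermore, as illustrated on the left of Figure~\ref{fig:flip.o}, a forward flip of $\gamma$ in $\surfo$ maps to the forward flip of $F(\gamma)$ in $\surf$, so $F$ extends to an oriented graph map $F_*\colon\EG(\surfo)\to\EG(\surf)$.

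For the covering property, I would show that for each triangulation $\T$ in $\EG(\surfo)$ the map $\gamma\mapsto F(\gamma)$ is a bijection between the arcs of $\T$ and those of $F(\T)$: both sets have cardinality $\numarc$, and distinct arcs of $\T$ have distinct images since otherwise they would bound a bigon in $\surf$ that could not be decomposed into triangles by the remaining arcs of $\T$. Since outgoing edges at any vertex of $\EG(\surfo)$ (resp.\ $\EG(\surf)$) correspond via forward flipping to arcs of the source triangulation, this gives the required local bijection on edges. Moreover, the three types of groupoid relations in Definition~\ref{def:gpd.b} are distinguished by whether a pair of arcs in $\T$ shares $0$, $1$ or $2$ common triangles; the map $F$ preserves these combinatorial incidence data, so the squares, pentagons and hexagons defining $\eg(\surfo)$ map bijectively onto those defining $\eg(\surf)$ (Definition~\ref{def:gpd}), and $F_*$ descends to a covering functor of groupoids.

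For the Galois property, the group $\MCG(\surfo)$ acts on $\EG(\surfo)$ by functoriality of the mapping class action on triangulations, and the subgroup $\ker\FM$ preserves the fibres of $F_*$ since, by definition, its elements induce the identity mapping class on $\surf$. Freeness of this action on fibres follows from the standard fact that a mapping class of $\surfo$ fixing setwise every arc of a triangulation, as well as $\M$ and $\Tri$, is trivial. For transitivity, given $\T_1,\T_2$ with $F(\T_1)=F(\T_2)=\RT$, I would pick smooth representatives for the arcs of both and use an ambient isotopy of $\surf$ rel $\M$ carrying one family to the other; this isotopy drags $\Tri$ along a loop in the configuration space of $\surf\setminus\M$, whose endpoint is a diffeomorphism of $\surfo$ sending $\T_1$ to $\T_2$ and lying in $\ker\FM$ by construction. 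I expect the main obstacle to be this last transitivity step, since one must carefully verify that the constructed lift is well-defined as a mapping class of $\surfo$ rather than merely a diffeomorphism; this is essentially the content of the Birman exact sequence relating $\MCG(\surfo)$ and $\MCG(\surf)$ via the pure surface braid group of $\Tri$ in $\surf$.
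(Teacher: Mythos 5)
Your proposal is correct and follows essentially the same route as the paper: define $F_*$ on triangulations and flips, verify the unique lifting property via the local bijection on arcs/edges and the fact that each square, pentagon and hexagon relation lifts with arbitrary source, then identify the deck group using the free action of $\MCG(\surfo)$ on $\EG(\surfo)$ (the Alexander method) together with transitivity of $\ker\FM$ on fibres. The only difference is that you supply details the paper leaves implicit, notably the bigon argument for injectivity on arcs and the explicit isotopy/Birman-sequence construction for transitivity, which the paper simply asserts.
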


\begin{proof}
Since $F$ induces a map on open arcs that respects compatibility, it also induces a map $F_*$
on triangulations, which is easily seen to be surjective.
Figure~\ref{fig:flip.o} shows that $F_*$ maps generating morphisms to generating morphisms.
Comparing Figures~\ref{fig:Pent.rel} and Figure~\ref{fig:hexagon}
with Figures~\ref{fig:4+}, ~\ref{fig:5+} and ~\ref{fig:6+},
we see that $F_*$ also respects the relations.

To show that $F_*$ is a covering, we need to check that it has the unique lifting property,
i.e. for any morphism $\alpha\colon k\to l$ in $\eg(\surf)$ and any object $\widehat{k}$ in $\eg(\surfo)$
with $F_*(\widehat{k})=k$, there is a unique morphism $\widehat\alpha\colon \widehat{k}\to \widehat{l}$
in $\eg(\surfo)$ such that $F_*(\widehat{\alpha})=\alpha$.

This property clearly holds for the similar map $F_*\colon\EG(\surfo)\to\EG(\surf)$,
because each graph is $(n,n)$ regular
\Note{(i.e. there are $n$ arrows going out and $n$ arrows coming in at each vertex)}
and $F_*$ locally matches edges.
In addition, each square, pentagon and hexagon
relation in $\EG(\surf)$ lifts to
a corresponding relation in $\EG(\surfo)$ with an arbitrary source lifting.
Thus paths in $\EG(\surf)$ that are equal in $\eg(\surf)$ lift to paths in $\EG(\surfo)$
that are equal in $\eg(\surfo)$.

By the Alexander method (cf. \cite[\S2.3]{FM}),
the mapping class group $\MCG(\surfo)$ acts freely on $\EG(\surfo)$.
On the other hand, any two decorated triangulations of $\surfo$ mapping by $F$ to the same triangulation of $\surf$ are related by the action of an element of $\MCG(\surfo)$ in $\ker \FM$.
Since $\ker \FM$ preserves the fibres, we see that it is the Galois group of the covering.
\end{proof}

Note that $\EG(\surfo)$, and hence $\eg(\surfo)$, are usually not connected,
but all connected components are isomorphic by Lemma~\ref{lem:Krammer}.
Consequently, for any initial triangulation $\T$ of $\surfo$,
we define $\EGT(\surfo)$ and $\egt(\surfo)$ to be
the connected components of $\EG(\surfo)$ and $\eg(\surfo)$, respectively, that contain $\T$.

\subsection{The braid twist group}

\begin{definition}
A \emph{closed arc} in $\surfo$ is (the isotopy class of) a simple curve in
the interior of $\surfo$ that connects two decorating points in $\Tri$.
Denote by $\cA(\surfo)$ the set of closed arcs on $\surfo$.

Let $\T$ be a triangulation of $\surfo$ consisting of $\numarc$ open arcs.
The \emph{dual graph} $\T^*$ of $\T$ is the collection of $\numarc$ closed arcs
in $\surfo$ with the property that each closed arc intersects just one different open arc in $\T$
once (see Figure~\ref{fig:ex0} for an example).
More precisely, for $\gamma$ in $\T$, the corresponding closed arc $\eta$ in $\T^*$
is the one contained in the quadrilateral $A$ with diagonal $\gamma$,
connecting the two decorating points in $A$ and intersecting $\gamma$ only once.
We say that $\eta$ and $\gamma$ are dual to each other with respect to $\T$.
\end{definition}

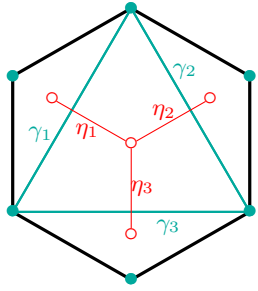
\begin{figure}[t]\centering
\begin{tikzpicture}[scale=.30,rotate=-120]
\foreach \j in {1,...,6}{\draw[very thick](60*\j+30:6)to(60*\j-30:6);}
\foreach \j in {1,...,6}{\draw[Emerald,thick](120*\j-30:6)node{$\bullet$}to(120*\j+90:6)
    (120*\j-90:6)node{$\bullet$};}
\foreach \j in {1,...,3}{  \draw[red](30+120*\j:4)to(0,0);}
\foreach \j in {1,...,3}{  \draw(30-120*\j:4)node[white]{$\bullet$}node[red]{$\circ$};
    \draw(30-120*\j+14:2)node[red]{\text{\footnotesize{$\eta_\j$}}};
    \draw(30-120*\j+24:4)node[Emerald]{\text{\footnotesize{$\gamma_\j$}}};}
\draw(0,0)node[white]{$\bullet$}node[red]{$\circ$};
\end{tikzpicture}
  \caption{The dual graph of a triangulation}
  \label{fig:ex0}
\end{figure}

\begin{definition}
For any closed arc $\eta\in\cA(\surfo)$, there is a (positive) \emph{braid twist}
$\Bt{\eta}\in\MCG(\surfo)$ along $\eta$, as shown in Figure~\ref{fig:Braid twist}.

The \emph{braid twist group} $\BT(\surfo)$ of the decorated marked surface $\surfo$
is the subgroup of $\MCG(\surfo)$ generated by the braid twists $\Bt{\eta}$
for all $\eta\in\cA(\surfo)$.
\end{definition}

\begin{figure}[ht]\centering
\begin{tikzpicture}[scale=.225]
  \draw[very thick,NavyBlue](0,0)circle(6)node[above,black]{$_\eta$};
  \draw(-120:5)node{+};
  \draw(-2,0)edge[red, very thick](2,0)  edge[cyan,very thick, dashed](-6,0);
  \draw(2,0)edge[cyan,very thick,dashed](6,0);
  \draw(-2,0)node[white] {$\bullet$} node[red] {$\circ$};
  \draw(2,0)node[white] {$\bullet$} node[red] {$\circ$};
  \draw(0:7.5)edge[very thick,->,>=stealth](0:11);
\end{tikzpicture}\;
\begin{tikzpicture}[scale=.225]
  \draw[very thick, NavyBlue](0,0)circle(6)node[below,black]{$_\eta$};
  \draw[red, very thick](-2,0)to(2,0);
  \draw[cyan,very thick, dashed](2,0).. controls +(0:2) and +(0:2) ..(0,-2.5)
    .. controls +(180:1.5) and +(0:1.5) ..(-6,0);
  \draw[cyan,very thick,dashed](-2,0).. controls +(180:2) and +(180:2) ..(0,2.5)
    .. controls +(0:1.5) and +(180:1.5) ..(6,0);
  \draw(-2,0)node[white] {$\bullet$} node[red] {$\circ$};
  \draw(2,0)node[white] {$\bullet$} node[red] {$\circ$};
\end{tikzpicture}
\caption{The braid twist $\Bt{\eta}$}
\label{fig:Braid twist}
\end{figure}
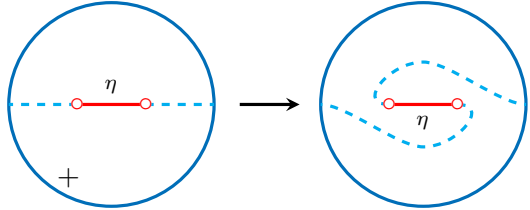

Given a triangulation $\T$ of $\surfo$,
the group $\BT(\surfo)$ has a natural set of generators
$\{\Bt{\eta}\mid \eta\in\T^*\}$ (\cite[Lemma~4.2]{QQ})
and we will also write the group as $\BT(\T)$
to indicate this choice of generators.

Observe that the composition of a pair of forward/backward flips in $\EG(\surfo)$ is
the change of triangulation of $\surfo$ induced by the negative/positive braid twist of
the dual closed arc (cf. Figure~\ref{fig:flips}).
Hence $\BT(\surfo)$ acts on each connected component $\EGT(\surfo)$.
In fact, we have the following.

\begin{lemma}\label{lem:covering}
The restricted covering $F_*\colon \egt(\surfo)\to \eg(\surf)$ (cf. Lemma~\ref{lem:Krammer})
is a Galois covering with group $\BT(\surfo)$.
\end{lemma}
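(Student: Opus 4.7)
The plan is to identify the deck group of the restricted covering $F_*\colon \egt(\surfo)\to \eg(\surf)$ with $\BT(\surfo)$, by leveraging the Galois covering from Lemma~\ref{lem:Krammer} together with the presentation of $\pi_1(\eg(\surf))$ provided by \eqref{eq:ct=pi}. First I would verify three preliminary points: (a) every generator $\Bt{\eta}$ of $\BT(\surfo)$ lies in $\ker \FM$, because such a twist is supported in a small neighbourhood of $\eta$ and becomes isotopic to the identity after the decorating points in $\Tri$ are forgotten; (b) $\BT(\surfo)$ preserves the connected component $\EGT(\surfo)$, which is exactly the observation stated just before the lemma, that applying a braid twist realises the composition of two flips; and (c) the restriction $F_*|_{\egt(\surfo)}$ remains surjective onto $\eg(\surf)$, which follows since every forward flip in $\eg(\surf)$ lifts, so every triangulation of $\surf$ is reached from $F_*(\T)$.

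Once these are in place, standard covering-theoretic reasoning identifies the Galois group of $F_*|_{\egt(\surfo)}$ as the stabiliser $H \leq \ker \FM$ of the component $\EGT(\surfo)$, acting simply transitively on each fibre. The inclusion $\BT(\surfo)\subseteq H$ then follows at once from (a), (b), together with the freeness of the $\MCG(\surfo)$-action on $\EG(\surfo)$ via the Alexander method, already invoked in the proof of Lemma~\ref{lem:Krammer}.

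The key remaining step is the reverse inclusion $H\subseteq \BT(\surfo)$. My plan is to view each $h\in H$ as a monodromy: lift a loop $\ell$ in $\eg(\surf)$ based at $F_*(\T)$ to a path in $\egt(\surfo)$ starting at $\T$, whose endpoint is $h\cdot \T$. By \eqref{eq:ct=pi}, which packages Theorem~\ref{thm:FST45} and Proposition~\ref{pp:ass}, the fundamental group $\pi_1(\eg(\surf),F_*(\T))$ equals $\CBr(\bfc_\RT)$ and is therefore generated by local twists. Each such local twist is a length-two loop given by two forward flips across a single edge of $\uEG(\surf)$, and the observation preceding the lemma says that this lifts in $\egt(\surfo)$ to the action of $\Bt{\eta}^{-1}$ for the dual closed arc $\eta$. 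Hence $h$ is expressible as a product of braid twists, giving $h\in \BT(\surfo)$.

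The main obstacle I expect is the change-of-basepoint bookkeeping: as the lift traverses a sequence of local twists, the ambient triangulation, and therefore the relevant dual closed arc $\eta\in \T^*$, changes at each step. To handle this cleanly I would combine the conjugation formula (Proposition~\ref{pp:conj}) with its geometric counterpart for braid twists proved in the prequel \cite{QZ3}, which ensures that conjugating a braid twist generator by a flip produces another braid twist (in general along a different closed arc). Once this compatibility is confirmed along a generating set of loops, one concludes $H=\BT(\surfo)$ and hence $F_*|_{\egt(\surfo)}$ is a Galois covering with group $\BT(\surfo)$.
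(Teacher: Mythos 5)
Your proposal is correct and follows essentially the same route as the paper: after invoking Lemma~\ref{lem:Krammer}, the heart of the argument is to realise any deck transformation (equivalently, any two points of a fibre joined by a path in $\egt(\surfo)$) as the monodromy of a loop in $\eg(\surf)$, use \eqref{eq:ct=pi} to write that loop as a product of local twists, and lift these to braid twists. The extra scaffolding you add (the two-inclusion framing and the base-point bookkeeping via the conjugation formula) is sound but not needed beyond what the paper records, since equivariance of lifting under the Galois group of Lemma~\ref{lem:Krammer} already lets one compose the lifted local twists directly.
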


\begin{proof}
The group $\BT(\surfo)$ does not alter the underlying triangulation of $\surf$,
so acts on the fibres of the restricted covering.
Given Lemma~\ref{lem:Krammer}, we just need to show that it acts transitively on these fibres.

Take any path $\hat p\colon \T_1\to \T_2$ in $\egt(\surfo)$ with end points in the same fibre,
mapping down to $p\colon T \to T$ in $\eg(\surf)$.
By \eqref{eq:ct=pi}, $p$ can be expressed (up to homotopy, i.e. groupoid relations) as a product of local twists at $T$.
Lifted to $\egt(\surfo)$, these local twists become braid twists.
In other words, there is an element of $\BT(\surfo)$ mapping $\T_1$ to $\T_2$, as required.
\end{proof}

\begin{figure}[ht]\centering
\begin{tikzpicture}[xscale=-.4,yscale=.4,rotate=90]
    \path (-135:4) coordinate (v1)
          (-45:4) coordinate (v2)
          (45:4) coordinate (v3);
\draw[Emerald,very thick](v1)to(v2)node{$\bullet$}to(v3);
    \path (-135:4) coordinate (v1)
          (45:4) coordinate (v2)
          (135:4) coordinate (v3);
\draw[Emerald,very thick](v2)node{$\bullet$}to(v3)node{$\bullet$}to(v1)node{$\bullet$};
\draw[>=stealth,Emerald,thick](135:4).. controls +(-10:2) and +(45:3) ..(0,0)
                             .. controls +(-135:3) and +(170:2) ..(-45:4);
\draw[red,thick](135:1.333)node{\tiny{$\circ$}}(-45:1.333)node{\tiny{$\circ$}};
\end{tikzpicture}
\begin{tikzpicture}[scale=1.2, rotate=180]
\draw[blue,<-,>=stealth](3-.6,2.2)to(3+.6,2.2);
\draw[blue](3-.25,1.5+.5)rectangle(3+.25,1.5);\draw(3,3)node{};
\draw[blue,->,>=stealth](3-.25,1.5+.5)to(3-.25,1.5+.15);
\draw[blue,->,>=stealth](3+.25,1.5)to(3+.25,1.5+.35);
\end{tikzpicture}
\begin{tikzpicture}[scale=.4]
    \path (-135:4) coordinate (v1)
          (-45:4) coordinate (v2)
          (45:4) coordinate (v3);
\draw[Emerald,very thick](v1)to(v2)node{$\bullet$}to(v3);
    \path (-135:4) coordinate (v1)
          (45:4) coordinate (v2)
          (135:4) coordinate (v3);
\draw[Emerald,very thick](v2)node{$\bullet$}to(v3)node{$\bullet$}to(v1)node{$\bullet$};
\draw[>=stealth,Emerald,thick](-135:4)to(45:4);
\draw[red,thick](135:1.333)node{\tiny{$\circ$}}(-45:1.333)node{\tiny{$\circ$}};
\end{tikzpicture}
\begin{tikzpicture}[scale=1.2, rotate=180]
\draw[blue,<-,>=stealth](3-.6,.7)to(3+.6,.7);
\draw[blue](3-.25,.5-.5)rectangle(3+.25,.5);\draw(3,1.5)node{};
\draw[blue,->,>=stealth](3-.25,.5-.5)to(3+.1,.5-.5);
\draw[blue,->,>=stealth](3+.25,.5)to(3-.1,.5);
\end{tikzpicture}
\begin{tikzpicture}[scale=.4]
    \path (-135:4) coordinate (v1)
          (-45:4) coordinate (v2)
          (45:4) coordinate (v3);
\draw[Emerald,very thick](v1)to(v2)node{$\bullet$}to(v3);
    \path (-135:4) coordinate (v1)
          (45:4) coordinate (v2)
          (135:4) coordinate (v3);
\draw[Emerald,very thick](v2)node{$\bullet$}to(v3)node{$\bullet$}to(v1)node{$\bullet$};
\draw[>=stealth,Emerald,thick](135:4).. controls +(-10:2) and +(45:3) ..(0,0)
                             .. controls +(-135:3) and +(170:2) ..(-45:4);
\draw[red,thick](135:1.333)node{\tiny{$\circ$}}(-45:1.333)node{\tiny{$\circ$}};
\end{tikzpicture}
\caption{The composition of forward flips}
\label{fig:flips}
\end{figure}
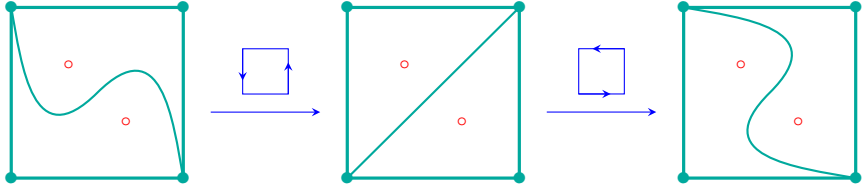

 \AKedit{Combining and enhancing results from the prequels to this paper,
we record the following closely related results, for later use.}

\begin{theorem}\label{thm:QQ}
There is an isomorphism
\begin{gather}\label{eq:iota_T}
    \iota_\T\colon \BT(\T)\to\ST(\Gamma_\RT)
\end{gather}
sending generating braid twists to the generating spherical twists
and $\ST(\Gamma_\RT)$ acts faithfully on $\EGp(\Gamma_\RT)$.
\end{theorem}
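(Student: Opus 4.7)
The plan is to define $\iota_\T$ on generators using the dictionary between $\T^*$ and the simples of $\h_{\Gamma_\RT}$: to each closed arc $\eta_e\in\T^*$ dual to the open arc $\gamma_e\in\T$, associate the simple $\Gamma_\RT$-module $S_e$ at the vertex $e$ of $Q_\RT$, and set $\iota_\T(\Bt{\eta_e})=\twi_{S_e}$. Both groups are then generated by matched elements, so surjectivity will be automatic once $\iota_\T$ is shown to be a well-defined homomorphism.

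To verify well-definedness, I would invoke the presentation of $\BT(\T)$ on these generators obtained in the prequel \cite{QZ3}. Its defining relations are geometric and local: two generating braid twists commute when their closed arcs are disjoint, and satisfy a three-strand braid relation when they share exactly one decoration point. Under the dictionary $\eta_e\leftrightarrow S_e$, the intersection pattern translates into the Ext-structure of simples: disjointness gives $\Hom^\bullet(S_e,S_f)=0$, while sharing a single decoration gives $\dim\Hom^\bullet(S_e,S_f)=1$ (concentrated in degree one, and its $3$-CY Serre dual in degree two). By the standard Seidel--Thomas calculus for spherical twists in a $3$-CY category, these Ext conditions yield commutation and braid relations respectively, so every defining relation maps to an identity and $\iota_\T$ extends to a homomorphism.

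The harder part is injectivity, together with the faithfulness of the induced action. The strategy is to combine the Galois covering $F_*\colon\egt(\surfo)\to\eg(\surf)$ with deck group $\BT(\surfo)$ from Lemma~\ref{lem:covering} with the covering $\egp(\Gamma_\RT)\to\ceg(\Gamma_\RT)$ with deck group $\STp(\Gamma_\RT)$ from Proposition~\ref{pp:cover ST}. Using Theorem~\ref{thm:BZ} and the mutation/flip compatibility behind Remark~\ref{rem:compat}, I would lift the base identification $\eg(\surf)\cong\ceg(\Gamma_\RT)$ to an $\iota_\T$-equivariant isomorphism of the two coverings $\EGT(\surfo)\cong\EGp(\Gamma_\RT)$, sending $\T\mapsto\h_{\Gamma_\RT}$ and forward flips to simple forward tilts. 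Since the Alexander method shows $\MCG(\surfo)$, and hence $\BT(\surfo)$, acts freely on $\EGT(\surfo)$, any $b\in\ker\iota_\T$ would act trivially on $\EGT(\surfo)$ and therefore equal the identity; by the same equivariance, $\ST_0$ must also be trivial, giving the asserted faithful action on $\EGp(\Gamma_\RT)$, in line with \cite[Theorem~B]{BQZ}.

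The principal obstacle is constructing the covering equivalence $\EGT(\surfo)\cong\EGp(\Gamma_\RT)$ that intertwines the two twist actions: one must not only match decorated triangulations with reachable hearts and forward flips with simple forward tilts, but also verify that the lift of each local twist on the base to either covering is exactly the corresponding generator of $\BT(\T)$ or $\ST(\Gamma_\RT)$, so that the two deck-group actions on fibres are $\iota_\T$-compatible. Once this structural matching is in place, both the isomorphism $\iota_\T$ and the faithfulness of $\ST(\Gamma_\RT)$ on $\EGp(\Gamma_\RT)$ fall out simultaneously.
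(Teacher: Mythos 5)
The paper does not reprove this statement at all: its proof is a two-line citation, taking the isomorphism $\BT(\T)\cong\STp(\Gamma_\RT)$ from \cite[Thm~1]{QQ} and then upgrading $\STp(\Gamma_\RT)$ to $\ST(\Gamma_\RT)$ (equivalently, the faithfulness claim) via \cite[Theorem~B]{BQZ}, as recorded in Remark~\ref{rem:ST0triv}. Your proposal instead tries to prove the theorem from scratch, and as written it has two genuine gaps.

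First, the well-definedness step misquotes the presentation you invoke. The presentation of $\BT(\T)$ from \cite{QZ3} (Definition~\ref{def:app} and Theorem~\ref{thm:QZ} in this paper) is \emph{not} generated only by commutation relations for disjoint arcs and braid relations for arcs sharing one decoration: it contains five further families of defining relations involving conjugated generators, namely $\Crel(a^b,c)$, $\Brel(a^b,c)$, $\Crel(c^{ae},b)$, $\Brel(c^{ae},b)$ and $\Crel(e,f^{abc})$, attached to the subquivers in Figure~\ref{eq:mut}. To conclude that $\Bt{\eta_e}\mapsto\twi_{S_e}$ extends to a homomorphism you must verify \emph{all} of these in $\ST(\Gamma_\RT)$; the Seidel--Thomas calculus handles only the first two families, and you say nothing about the rest. (Note also that pairs of closed arcs sharing both decoration points occur and satisfy no braid-type relation, which is precisely why the extra relations are needed.)

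Second, the injectivity and faithfulness argument rests on an $\iota_\T$-equivariant isomorphism of coverings $\EGT(\surfo)\cong\EGp(\Gamma_\RT)$, which you flag as ``the principal obstacle'' but do not construct. In the paper's logical order this isomorphism is Theorem~\ref{cor:QQ}, which is \emph{deduced from} \cite[Thm~1]{QQ}, i.e.\ from the very isomorphism you are trying to establish; so your route is circular unless you give an independent construction. The difficulty is real: defining the map by lifting flip sequences from $\T\mapsto\h_{\Gamma_\RT}$ requires showing the resulting heart depends only on the decorated triangulation reached, not on the path, and that path-independence is essentially equivalent to the injectivity of $\iota_\T$ (it is the content of \cite[Thm~1]{QQ} and \cite[Prop~3.2]{QQ2}). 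Granting that isomorphism, your deductions of injectivity and of the triviality of $\ST_0$ from the freeness of the $\MCG(\surfo)$-action are fine, but the missing construction is where all the work lies.
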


\begin{proof}
In \cite[Thm~1]{QQ} the isomorphism is proved replacing $\ST(\Gamma_\RT)$ by its quotient
$\STp(\Gamma_\RT)$, as defined in Theorem~\ref{thm:KN},
which acts faithfully on $\Sph(\Gamma_\RT)$ or equivalently on $\EGp(\Gamma_\RT)$.
\AKedit{
However we now know that $\STp(\Gamma_\RT)=\ST(\Gamma_\RT)$ by Remark~\ref{rem:ST0triv}.
}
\end{proof}


\begin{theorem}\label{cor:QQ}
The isomorphism $\uEG(\surf)\cong\uCEG(\Gamma_\RT)$ of Theorem~\ref{thm:BZ} lifts to
an isomorphism
\begin{gather}\label{eq:EGp=EGT}
    \EGT(\surfo) \cong \EGp(\Gamma_\RT),
\end{gather}
mapping the decorated triangulation $\T$ with underlying ordinary triangulation $\RT$ to the canonical heart $\h_{\Gamma_\RT}$.
Furthermore, this induces an isomorphism
\begin{gather}\label{eq:egp=egt}
    \egt(\surfo) \cong \egp(\Gamma_\RT).
\end{gather}
\end{theorem}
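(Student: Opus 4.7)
The plan is to realise $\EGT(\surfo)$ and $\EGp(\Gamma_\RT)$ as isomorphic Galois coverings of a common base. By Lemma~\ref{lem:covering}, $F_*\colon \EGT(\surfo)\to \EG(\surf)$ is a Galois covering with group $\BT(\surfo)=\BT(\T)$. On the other hand, by Theorem~\ref{thm:KN} together with Remark~\ref{rem:ST0triv}, the map $\EGp(\Gamma_\RT)\to\CEG(\Gamma_\RT)$ is a Galois covering with group $\ST(\Gamma_\RT)$. Remark~\ref{rem:compat} (together with Theorem~\ref{thm:BZ}) identifies the bases as oriented graphs, via the correspondence $\RT\leftrightarrow \bfc_\RT$. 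Finally, Theorem~\ref{thm:QQ} supplies the isomorphism $\iota_\T\colon \BT(\T)\to\ST(\Gamma_\RT)$ between the two deck groups.

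To construct the isomorphism \eqref{eq:EGp=EGT}, I would fix basepoints $\T\mapsto\h_{\Gamma_\RT}$ (which both sit over $\RT\leftrightarrow\bfc_\RT$) and use unique path lifting: for any $\T'$ in $\EGT(\surfo)$, pick a path $\T\to\T'$, push it down by $F_*$ to a path in $\EG(\surf)\cong\CEG(\Gamma_\RT)$, and lift uniquely to a path in $\EGp(\Gamma_\RT)$ starting at $\h_{\Gamma_\RT}$. The resulting endpoint is declared to be the image of $\T'$. Forward flips map to forward tilts, so this gives a map of oriented graphs. The key point is well-definedness: two paths from $\T$ to $\T'$ differ by a loop at $\T$ in $\EGT(\surfo)$, whose image downstairs is a loop at $\RT$ whose monodromy on the fibre over $\RT$ in $\EGT(\surfo)$ is trivial, since the lift closes up. Under $\iota_\T$ this same loop has trivial monodromy on the fibre over $\bfc_\RT$ in $\EGp(\Gamma_\RT)$, so the two lifts agree. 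The main obstacle is precisely this monodromy matching, and it is exactly what Theorem~\ref{thm:QQ} delivers: $\iota_\T$ intertwines the natural actions of $\BT(\T)$ on the fibre of $F_*$ over $\RT$ and of $\ST(\Gamma_\RT)$ on the fibre of the silting projection over $\bfc_\RT$, since both actions are generated by, respectively, the braid twist and the spherical twist in the simple directly adjacent to the basepoint. Surjectivity and injectivity of the resulting map then follow from transitivity and freeness of the Galois actions.

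For the induced isomorphism \eqref{eq:egp=egt} of groupoids, observe that $\egt(\surfo)$ and $\egp(\Gamma_\RT)$ are obtained from the path groupoids of $\EGT(\surfo)$ and $\EGp(\Gamma_\RT)$ by imposing the square, pentagon and hexagon relations of Definition~\ref{def:gpd.b} and Definition~\ref{def:gpd.h}, respectively. The square/pentagon/hexagon configurations in $\EGT(\surfo)$ are characterised by the local combinatorics of pairs of arcs in each triangulation (cf. Remark~\ref{rem:compat}), while those in $\EGp(\Gamma_\RT)$ are characterised by the corresponding $\Ext^1$-configuration of pairs of simples (Lemma~\ref{lem:456}, Remark~\ref{rem:456}). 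Under \eqref{eq:EGp=EGT}, the local arc configuration at a decorated triangulation $\T'$ is transported to the Ext quiver of the simples in the heart $\h'$, which, by the compatibility already recorded after Theorem~\ref{thm:KN}, is the $3$-CY double of the Gabriel quiver of the cluster $\bfc'$ to which $\T'$ maps, and this is precisely the quiver $Q_{\T'}$ associated to the decorated triangulation. Hence the three types of relations match on the nose, and the graph isomorphism descends to the required groupoid isomorphism.
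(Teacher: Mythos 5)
Your overall strategy is sound and, unlike the paper's, self-contained: the paper simply cites \cite[Prop.~3.2]{QQ2} for the graph isomorphism \eqref{eq:EGp=EGT} and then observes (as you do in your last paragraph) that the square/pentagon/hexagon configurations on the two sides are matched via Remark~\ref{rem:compat}, so that \eqref{eq:EGp=EGT} induces \eqref{eq:egp=egt}. Reconstructing the first isomorphism by exhibiting $\EGT(\surfo)$ and $\EGp(\Gamma_\RT)$ as Galois covers of the common base $\EG(\surf)\cong\CEG(\Gamma_\RT)$ (Lemma~\ref{lem:covering} and Theorem~\ref{thm:KN} with Remark~\ref{rem:ST0triv}) and then matching them via the deck-group isomorphism $\iota_\T$ of Theorem~\ref{thm:QQ} is a legitimate alternative, and in fact makes explicit the mechanism that the cited proof also rests on. Your treatment of the groupoid part is essentially identical to the paper's.

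The one step you should firm up is the assertion that the monodromy matching ``is exactly what Theorem~\ref{thm:QQ} delivers''. Theorem~\ref{thm:QQ} gives only an abstract isomorphism of deck groups taking generators to generators; to conclude that the two covers are isomorphic you need the two monodromy homomorphisms $\pi_1(\EG(\surf),\RT)\to\BT(\T)\xrightarrow{\iota_\T}\ST(\Gamma_\RT)$ and $\pi_1(\CEG(\Gamma_\RT),\bfc_\RT)\to\ST(\Gamma_\RT)$ to coincide as homomorphisms. Computing that the local twist $t_i$ acts on the respective fibres by $\Bt{\eta_i}^{-1}$ (cf.\ Figure~\ref{fig:flips}) and by $\twi_{S_i}^{-1}$ (via \cite[(8.3)]{KQ}, as in Lemma~\ref{lem:456}) — which is the precise content of your ``adjacent simple'' sentence — only determines the homomorphisms on the local-twist loops, and these do \emph{not} generate $\pi_1(\EG(\surf),\RT)$; by Theorem~\ref{thm:FST45} and \eqref{eq:ct=pi} they generate it only modulo the square, pentagon and hexagonal dumbbell loops. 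You must therefore also invoke that every such relation loop lifts to a \emph{closed} loop at every point of both covers, so that both monodromy representations factor through $\pi_1(\eg(\surf),\RT)=\CBr(\bfc_\RT)$; this is supplied by Lemma~\ref{lem:Krammer} on the surface side and by Lemma~\ref{lem:456}, Remark~\ref{rem:456} and Proposition~\ref{pp:cover ST} on the categorical side. With that input added, your argument closes up correctly.
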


\begin{proof}
The first isomorphism \eqref{eq:EGp=EGT} is proved
(just using \cite[Thm~1]{QQ}) in \cite[Prop~3.2]{QQ2} for the silting exchange graph,
but the proof applies (even more naturally) to $\EGp(\Gamma_\RT)$.
The compatibility of squares, pentagons and hexagons in $\EG(\surf)$ and $\CEG(\Gamma_\RT)$
discussed in Remark~\ref{rem:compat}, lifts to
$\EGT(\surfo)$  and $\EGp(\Gamma_\RT)$,
so \eqref{eq:EGp=EGT} induces \eqref{eq:egp=egt}, as required.
\end{proof}

\subsection{Fundamental groups}

We now proceed to prove a theorem analogous to Theorem~\ref{thm:FST45}, but for the cover $\EGT(\surfo)$.
In this case, we also have to include hexagons.
We need to import a result from a prequel concerning various types of relations in $\BT(\T)$.

\begin{definition}\label{def:app}\cite[Definition~5.1]{QZ3}
Let $(Q_\T,W_\T)$ be the quiver with potential
associated to a triangulation $\T$ of $\surfo$.
Define the braid group $\Br(Q_\T,W_\T)$ as follows:
the generators are identified with the vertices of $Q_\T$, and thus with
the open arcs in $\T$, and the relations are
\begin{enumerate}
  \item $\Crel(a,b)$, that is, $ab=ba$, if there is no arrow between them.
  \item $\Brel(a,b)$, that is, $aba=bab$, if there is exactly one arrow between them.
  \item $\Crel(a^b,c)$, where $$a^b\colon =b^{-1}ab,$$ if the full subquiver between $a,b,c$ is
  the first quiver in Figure~\ref{eq:mut} and this 3-cycle contributes a term in $W_\T$.
  \item $\Brel(a^b,c)$ if the full subquiver between $a,b,c$ is
  the second quiver in Figure~\ref{eq:mut} and (exactly) one 3-cycle between them
  contributes a term in $W_\T$.
  \item $\Crel(c^{ae},b)$ if the full subquiver between $a,e,b,c$ is
  the third quiver in Figure~\ref{eq:mut} and one 3-cycle between $\{a,b,c\}$ and one 3-cycle between $\{e,b,c\}$
  contribute terms in $W_\T$, in such a way that the arrows $b\to c$ in these two 3-cycles are different.
  \item $\Brel(c^{ae},b)$ and $\Brel(c^{ea},b)$
  if the full subquiver between $a,b,c,e$ is
  the forth quiver in Figure~\ref{eq:mut} and
  the 3-cycles between $\{a,b,c\}$ and $\{e,b,c\}$ contribute terms in $W_\T$ in the same way as
  the previous case.
  \item if, in the previous case, there is also a unique 3-cycle between $\{a,e,f\}$, which
  also contributes a term in $W_\T$, then there is an additional relation $\Crel(e,f^{abc})$.
\end{enumerate}
\end{definition}

\begin{figure}[ht]
\begin{tikzpicture}[scale=0.5,
  arrow/.style={->,>=stealth},
  equalto/.style={double,double distance=2pt},
  mapto/.style={|->}]
\node (x1) at (0,2){};
\node (x2) at (2,-1){};
\node (x3) at (-2,-1){};
\draw[white, fill=gray!11] (2,-1)to(-2,-1)to(0,2)to(2,-1);
  \node at (x1){$a$};
  \node at (x2){$b$};
  \node at (x3){$c$};
  \foreach \n/\m in {1/2,2/3,3/1}
    {\draw[arrow]
     (x\n) to (x\m);}
\end{tikzpicture}
\quad
\begin{tikzpicture}[scale=0.5,
  arrow/.style={->,>=stealth},
  equalto/.style={double,double distance=2pt},
  mapto/.style={|->}]
\node (x2) at (0,2){};
\node (x1) at (2,-1){};
\node (x3) at (-2,-1){};
\draw[white, fill=gray!11] (2,-.8)to(-2,-.8)to(0,2)--cycle;
  \node at (x1){$b$};
  \node at (x2){$a$};
  \node at (x3){$c$};
  \foreach \n/\m in {3/2,2/1}
    {\draw[arrow]
     (x\n) to (x\m);}
\draw[arrow] (x1.150) to (x3.30);
\draw[arrow] (x1.-150)to (x3.-30);
\end{tikzpicture}
\quad
\begin{tikzpicture}[xscale=0.5,yscale=.5,
  arrow/.style={->,>=stealth},
  equalto/.style={double,double distance=2pt},
  mapto/.style={|->}]
\node (x4) at (2,2){};
\node (x1) at (2,-1){};
\node (x3) at (-2,-1){};
\node (x2) at (-2,2){};
\draw[white, fill=gray!11] (2,-.8)to(-2,-.8)to(2,2)--cycle;
\draw[white, fill=gray!11] (2,-1.2)to(-2,-1.2)to(-2,2)--cycle;
  \node at (x1){$b$};
  \node at (x2){$a$};
  \node at (x3){$c$};
  \node at (x4){$e$};
  \foreach \n/\m in {3/2,2/1,3/4,4/1}
    {\draw[arrow]
     (x\n) to (x\m);}
\draw[arrow] (x1.150) to (x3.30);
\draw[arrow] (x1.-150)to (x3.-30);
\end{tikzpicture}
\quad
\begin{tikzpicture}[xscale=0.5,yscale=.5,
  arrow/.style={->,>=stealth},
  equalto/.style={double,double distance=2pt},
  mapto/.style={|->}]
\node (x4) at (2,2){};
\node (x1) at (2,-1){};
\node (x3) at (-2,-1){};
\node (x2) at (-2,2){};
\draw[white, fill=gray!11] (2,-1)to(-2,-1)to(2,2)--cycle;
\draw[white, fill=gray!11] (2,-1)to(-2,-1)to(-2,2)--cycle;
  \node at (x1){$b$};
  \node at (x2){$a$};
  \node at (x3){$c$};
  \node at (x4){$e$};
  \foreach \n/\m in {3/2,2/1,3/4,4/1,2/4}
    {\draw[arrow]
     (x\n) to (x\m);}
\draw[arrow] (x1.150) to (x3.30);
\draw[arrow] (x1.-150)to (x3.-30);
\end{tikzpicture}
\quad
\begin{tikzpicture}[xscale=0.5,yscale=.5,
  arrow/.style={->,>=stealth},
  equalto/.style={double,double distance=2pt},
  mapto/.style={|->}]
\node (x4) at (2,2){};
\node (x1) at (2,-1){};
\node (x3) at (-2,-1){};
\node (x2) at (-2,2){};
\node (x5) at (0,3){};
\draw[white, fill=gray!11] (2,2)to(-2,2)to(0,3)--cycle;
\draw[white, fill=gray!11] (2,-1)to(-2,-1)to(2,2)--cycle;
\draw[white, fill=gray!11] (2,-1)to(-2,-1)to(-2,2)--cycle;
  \node at (x1){$b$};
  \node at (x2){$a$};
  \node at (x3){$c$};
  \node at (x4){$e$};
  \node at (x5){$f$};
  \foreach \n/\m in {3/2,2/1,3/4,4/1,5/2,4/5,2/4}
    {\draw[arrow]
     (x\n) to (x\m);}
\draw[arrow] (x1.150) to (x3.30);
\draw[arrow] (x1.-150)to (x3.-30);
\end{tikzpicture}
\caption{Five cases of (sub-)quivers with potential}\label{eq:mut}
\end{figure}
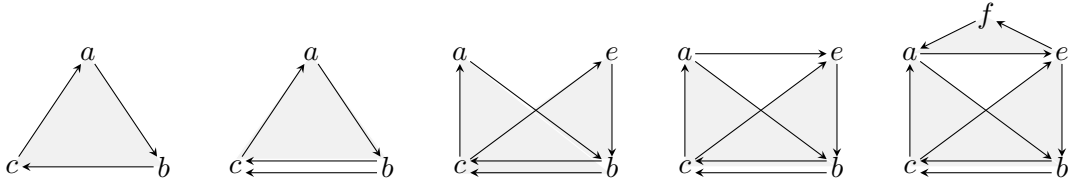
Note that all the terms in the potentials mentioned above are necessary
\Note{(and locally they are sufficient)}
to ensure that that quivers with potential appearing are non-degenerate,
i.e. that any 2-cycles created by mutation will be cancelled by a corresponding quadratic term in the mutated potential.
Thus the corresponding relations are in effect determined just by the full sub-quivers appearing.

\begin{theorem}\label{thm:QZ}\cite[Theorem~5.3]{QZ3}
The braid twist group $\BT(\T)$ is isomorphic
to the braid group $\Br(Q_\T,W_\T)$ above,
where the braid twist $\Bt{\eta}$ corresponds to the dual open arc of $\eta$ in $\T$,
for any $\eta\in\T^*$.
Thus we obtain an explicit finite presentation of $\BT(\T)$.
\end{theorem}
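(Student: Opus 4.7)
The plan is to establish the isomorphism in two steps: first exhibit a surjection $\Br(Q_\T,W_\T)\to\BT(\T)$ by verifying that each of the seven families of relations holds among the generating braid twists, and then show that this surjection is injective, i.e.\ that these relations suffice.

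For the first step I would argue geometrically, case by case. For $\Crel(a,b)$ when there is no arrow between $a$ and $b$ in $Q_\T$, the two open arcs share no triangle of $\T$, so the dual closed arcs $\eta_a,\eta_b\in\T^*$ have disjoint supports and $\Bt{\eta_a}$ and $\Bt{\eta_b}$ commute on the nose. For $\Brel(a,b)$ when there is exactly one arrow, the open arcs share a single triangle, so $\eta_a$ and $\eta_b$ share exactly one decoration point and the relation reduces to the classical Artin braid relation between two adjacent half-twists in a disc with two marked points. For relations (3)--(7), the key identity is $\Bt{\eta_b}^{-1}\Bt{\eta_a}\Bt{\eta_b}=\Bt{\Bt{\eta_b}^{-1}(\eta_a)}$, reducing everything to computing the isotopy class of $\Bt{\eta_b}^{-1}(\eta_a)$. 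The conditions in Definition~\ref{def:app} on which $3$-cycles contribute to $W_\T$ precisely specify which triangles of $\T$ carry a decoration, so the local picture on $\surfo$ is pinned down; one then reads off whether $\Bt{\eta_b}^{-1}(\eta_a)$ is disjoint from, or shares an endpoint with, $\eta_c$ (and similarly for the longer conjugations in (5)--(7)), giving a commuting or braid relation.

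For the second step (sufficiency) I would combine three inputs. First, Theorem~\ref{thm:QQ} identifies $\BT(\T)$ with $\ST(\Gamma_\RT)$, and Lemma~\ref{lem:covering} together with \eqref{eq:ct=pi} shows that any word in the generating braid twists corresponds to a loop in $\eg(\surf)$ lifted to $\egt(\surfo)$. Second, Theorem~\ref{thm:FST45} guarantees that every such loop decomposes into squares and pentagons in $\uEG(\surf)$, and hence, after lifting, into squares, pentagons and hexagons in $\EGT(\surfo)$ of the type listed in Definition~\ref{def:gpd.b}. Thus it is enough to verify that each of these three elementary relations in $\egt(\surfo)$ is a consequence of the seven families in Definition~\ref{def:app}. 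Since squares, pentagons and hexagons are each indexed by a pair of open arcs in a common triangulation, this matches perfectly the case division in Definition~\ref{def:app}, organised by the full sub-quiver on $2$, $3$ or $4$ vertices of $Q_\T$.

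The hardest step will be verifying completeness of relations (6) and (7), where several conjugations and four- or five-vertex sub-quivers interact. Non-degeneracy of $W_\T$ enters crucially here: the $2$-cycles produced by intermediate mutations must cancel against quadratic terms in the mutated potentials, and the bookkeeping for this cancellation has to be kept in lockstep with the topological picture of closed arcs on $\surfo$. My plan for this step is to reduce to a small explicit model --- for instance a polygon with one or two handles on which all seven configurations occur --- handle it by direct computation, and then extend to the general case by induction on the genus and number of boundary components of $\surf$, cutting along separating open arcs; at each inductive step the relations newly introduced are of the types already listed, so no further relations arise.
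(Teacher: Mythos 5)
The paper offers no proof of this statement: it is imported verbatim from the prequel \cite{QZ3}, so any self-contained argument is necessarily a new route. Your first step is sound in outline --- verifying that the seven families of relations in Definition~\ref{def:app} hold among the braid twists via disjointness of dual closed arcs, shared decoration points, and the identity $\Bt{\eta_b}^{-1}\Bt{\eta_a}\Bt{\eta_b}=\Bt{\Bt{\eta_b}^{-1}(\eta_a)}$ --- and it is essentially what \cite[Prop.~5.3]{QZ3} and Proposition~\ref{cor:relations} do. This gives a surjection $\Br(Q_\T,W_\T)\twoheadrightarrow\BT(\T)$.

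The gap is in your second step, and it is a circularity. You reduce injectivity to the claim that every loop in $\EGT(\surfo)$ decomposes into the squares, pentagons and hexagons of Definition~\ref{def:gpd.b}. That claim is precisely Theorem~\ref{thm:45}, which in this paper is \emph{deduced from} the presentation you are trying to prove: via \eqref{eq:ses2} one has $\pi_1(\egt(\surfo),\T)=\ker i_\T$, and triviality of this kernel is obtained by constructing an inverse to $i_\T$ out of the known presentation of $\BT(\T)$. Theorem~\ref{thm:FST45} cannot substitute for this: it controls $\pi_1\uEG(\surf)$, the unoriented, undecorated exchange graph, and through Proposition~\ref{pp:ass} it only says that $\pi_1(\eg(\surf),\RT)$ is generated by local twists; it says nothing about which words in local twists lie in $\ker i_\T$. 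Squares and pentagons downstairs do not automatically lift to closed polygons upstairs, because the $2$-cycles of $\EG(\surf)$ lift to braid twists that move between fibres (Figure~\ref{fig:flips}); the entire content of the presentation is concentrated in exactly this failure. A test case: for the annulus with one marked point on each boundary component, $\uEG(\surf)$ is an infinite line, so Theorem~\ref{thm:FST45} is vacuous, $Q_\T$ is the Kronecker quiver to which none of the seven relation types applies, and the theorem asserts that $\BT(\T)$ is free of rank two --- a nontrivial faithfulness statement your scheme cannot reach. Your closing induction by cutting along separating arcs is also not enough as stated: braid twist groups of subsurfaces do not obviously inject, and relations involving closed arcs crossing the cut are uncontrolled. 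To complete step two you would in effect have to reprove the main theorem of \cite{QZ3} by independent means.
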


\begin{proposition}\label{cor:relations}
If the associated quiver of a cluster $\bfc$ in $\CEG(\Gamma)$ has a full sub-quiver in
one of the cases of Definition~\ref{def:app},
then the corresponding relation there holds in the cluster braid group $\CBr(\bfc)$.
\end{proposition}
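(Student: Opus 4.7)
The plan is to proceed by case analysis over the seven relations of Definition~\ref{def:app}. The underlying strategy in all but the first two cases is the same: mutate at appropriate vertices to reach a cluster where the two ``outer'' vertices of the relation become either disconnected or connected by a single arrow, apply Lemma~\ref{lem:12} there, and then transport the commutation or braid relation back to $\bfc$ using the conjugation formula of Proposition~\ref{pp:conj}.

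Cases (1) and (2) are immediate: the sub-quiver condition places two vertices in either the no-arrow or single-arrow situation, and Lemma~\ref{lem:12} delivers $\Crel$ or $\Brel$ directly among the corresponding local twists.

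For case (3), given a 3-cycle $a\to b\to c\to a$ with this cycle contributing to $W_\bfc$, I would mutate at $b$. The non-degeneracy condition supplied by the 3-cycle being in $W$ is exactly what is needed for the composed arrow $a\to c$ produced by the FZ rule to cancel with the existing $c\to a$. Hence $a$ and $c$ are disconnected in $Q_{\bfc_b}$, so by Lemma~\ref{lem:12} the local twists $t^b_a$ and $t^b_c$ commute in $\CBr(\bfc_b)$. Pulling back via the isomorphism $\conj_z\colon \CBr(\bfc)\to\CBr(\bfc_b)$ of Proposition~\ref{pp:conj}, where $z$ is the forward mutation at $b$, the formula conjugates the local twist corresponding to $a$ by the one corresponding to $b$ (since $a\to b$ is present), while leaving the twist for $c$ essentially unchanged. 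This yields the commutation $\Crel(a^b,c)$ in $\CBr(\bfc)$. Case (4) is analogous: after mutating at $b$, the single 3-cycle in $W$ together with the double arrow $b\Rightarrow c$ leaves exactly one arrow between $a$ and $c$, so Lemma~\ref{lem:12} produces a braid relation that pulls back to $\Brel(a^b,c)$.

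For cases (5), (6), (7), the plan is to iterate the above idea over the vertices appearing in the subscript of the conjugation word. In case (5), two successive mutations at $a$ and at $e$ (in the order matching the word $ae$) each consume one of the two $b\to c$ arrows via the respective $abc$- and $ebc$-cycles in $W$, leaving $b$ and $c$ disconnected; Lemma~\ref{lem:12} gives commutation, and composing Proposition~\ref{pp:conj} over both mutations produces the nested conjugation $c^{ae}$, yielding $\Crel(c^{ae},b)$. Case (6) differs only in that the extra arrow $a\to e$ survives the mutations so that a single arrow between $b$ and $c$ remains, giving $\Brel(c^{ae},b)$ (and the reversed order of mutations gives $\Brel(c^{ea},b)$). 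Case (7) follows the same pattern with three mutations at $a,b,c$, using the additional $aef$-cycle in $W$ to cancel all arrows between $e$ and $f$, so the commutation pulled back through the three conjugations realises $\Crel(e,f^{abc})$.

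The main obstacle is purely bookkeeping for cases (5)--(7): one must verify step by step that the potential terms specified in Definition~\ref{def:app} are precisely those needed for every 2-cycle created by the iterated FZ rule to cancel, and that the composition $\conj_{z_1}\circ\cdots\circ\conj_{z_r}$ of conjugation formulas, together with the orientation conventions encoded in ``arrows from $l$ to $k$'' in Proposition~\ref{pp:conj}, reproduces the nested words $c^{ae}$, $c^{ea}$, $f^{abc}$ in exactly the order claimed. This is routine but requires drawing out the intermediate quivers at each mutation step and tracking which conjugations are switched on by the surviving arrows.
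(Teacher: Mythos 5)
Your proposal matches the paper's proof: cases (1) and (2) are exactly Lemma~\ref{lem:12}, and the remaining cases are handled by mutating at the vertices in the conjugation word, invoking Lemma~\ref{lem:12} in the mutated cluster (where non-degeneracy of the potential guarantees the 2-cycles cancel), and transporting back via the conjugation formula of Proposition~\ref{pp:conj} --- which is precisely the argument the paper gives, deferring the detailed bookkeeping for the later cases to the corresponding proof in \cite{QZ3}. No issues.
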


\begin{proof}
The first two relations are Lemma~\ref{lem:12}.
The remaining relations follow from the conjugation formula in Proposition~\ref{pp:conj},
after applying one or more mutations and using the first two relations.
The precise arguments are the same as for \cite[Prop~5.3]{QZ3}.
For instance, in the third case (the leftmost quiver in Figure~\ref{eq:mut}), after mutating at $b$,
$a$ becomes $a^b$, $c$ does not change and there is no arrow between
the corresponding vertices
in the mutated quiver.
Hence, the commutation relation $\Crel(a^b,c)$ holds.
\end{proof}

\begin{theorem}\label{thm:45}
The fundamental group of the exchange graph $\EGT(\surfo)$ of decorated triangulations
is generated by squares, pentagons and hexagons.
Equivalently, $\egt(\surfo)$ is simply connected or
\[ \pi_1(\eg(\surf),\RT)=\BT(\T). \]
\end{theorem}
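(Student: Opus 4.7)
The plan is to combine the Galois covering of Lemma~\ref{lem:covering} with the presentation of $\BT(\T)$ from Theorem~\ref{thm:QZ}. First observe that the three statements in the theorem are equivalent. The squares, pentagons and hexagons are precisely the relations imposed on the path groupoid of $\EGT(\surfo)$ to form $\egt(\surfo)$ in Definition~\ref{def:gpd.b}, so $\pi_1(\EGT(\surfo),\T)$ being generated by them is the same as $\egt(\surfo)$ being simply connected. The Galois covering $F_*\colon\egt(\surfo)\to\eg(\surf)$ of Lemma~\ref{lem:covering} then gives a short exact sequence
\[
    1 \to \pi_1(\egt(\surfo),\T) \to \pi_1(\eg(\surf),\RT) \to \BT(\T) \to 1,
\]
so simple connectivity of $\egt(\surfo)$ is equivalent to the right-hand surjection being an isomorphism, i.e.\ to the third claim.

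Next I would pin down how the local twist generators of $\CBr(\bfc_\RT)\cong\pi_1(\eg(\surf),\RT)$ (using \eqref{eq:ct=pi}) behave under the covering. By Figure~\ref{fig:flips} and the discussion preceding Lemma~\ref{lem:covering}, the two forward flips composing a local twist $t_i$ lift in $\EGT(\surfo)$ to a pair of forward flips whose net action on the fibre over $\RT$ is the braid twist $\Bt{\eta_i}$ along the closed arc $\eta_i\in\T^*$ dual to the $i$-th open arc of $\T$. Hence the surjection above sends $t_i\mapsto\Bt{\eta_i}$ on generators.

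The last step constructs an inverse. Theorem~\ref{thm:QZ} provides the finite presentation $\BT(\T)\cong\Br(Q_\T,W_\T)$ of Definition~\ref{def:app}, whose generators are the $\Bt{\eta_i}$. By Proposition~\ref{cor:relations}, every defining relation there already holds between the matching local twists in $\CBr(\bfc_\RT)$, so the assignment $\Bt{\eta_i}\mapsto t_i$ extends to a well-defined homomorphism $\BT(\T)\to\CBr(\bfc_\RT)$. This is a section of the surjection $\CBr(\bfc_\RT)\to\BT(\T)$ whose image contains all local twists, hence equals $\CBr(\bfc_\RT)$; both maps are therefore isomorphisms, yielding $\pi_1(\egt(\surfo),\T)=1$.

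The hard part will be the geometric identification of the lift of a local twist as the appropriate braid twist, which is the only step that genuinely uses the decoration. Once it is in place, the rest of the proof is the elementary fact that a surjective homomorphism admitting a section defined on a generating set of the source must be an isomorphism, with the nontrivial inputs being the presentation of $\BT(\T)$ in Theorem~\ref{thm:QZ} and the matching relations provided by Proposition~\ref{cor:relations}.
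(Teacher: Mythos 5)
Your proposal is correct and follows essentially the same route as the paper: the short exact sequence from the Galois covering of Lemma~\ref{lem:covering}, the identification \eqref{eq:ct=pi} of $\pi_1(\eg(\surf),\RT)$ with $\CBr(\bfc_\RT)$ generated by local twists, and the construction of an inverse to $i_\T$ on generators via the presentation of $\BT(\T)$ in Theorem~\ref{thm:QZ} together with Proposition~\ref{cor:relations}. The only cosmetic difference is that the paper sends the local twist to $\Bt{\eta_i}^{-1}$ rather than $\Bt{\eta_i}$, an orientation convention that does not affect the argument.
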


\begin{proof}
Consider the Galois covering $F_*\colon \egt(\surfo)\to \eg(\surf)$ with covering group $\BT(\surfo)$,
as in Lemma~\ref{lem:covering}.
Then there is a covering sequence:
\begin{equation}\label{eq:ses2}
  1\to \pi_1(\egt(\surfo),\T) \to \pi_1(\eg(\surf),\RT) \xrightarrow{i_\T} \BT(\T) \to 1.
\end{equation}
Denote by $\{\gamma_i\}$ the open arcs in $\T$ and their dual closed arcs by $\{\eta_i\}$.
Since $\pi_1(\eg(\surf),\RT)$ is generated by local twists, by \eqref{eq:ct=pi},
the map $i_\T$ is determined by sending the local twist with respect to an open arc $F(\gamma_i)$ in $\RT$
to the braid twist $\Bt{\eta_i}^{-1}$
(cf. Figure~\ref{fig:flips}).

A finite presentation of $\BT(\T)$ is given in
Theorem~\ref{thm:QZ} with respect to the standard generators $\{\Bt{\eta}\mid \eta\in\T^*\}$.
The (generating) relations are precisely those given in Definition~\ref{def:app}
and we know, by Proposition~\ref{cor:relations}, that these relations are satisfied by the local twists
in $\CBr(\bfc_\RT)$.
Hence $i_\T$ has a well-defined inverse, also taking generators to generators, and so
$i_\T$ is an isomorphism and $\pi_1(\egt(\surfo),\T)=1$, as required.
\end{proof}

One consequence of Theorem~\ref{thm:45} is that the covering $F_*\colon \egt(\surfo)\to \eg(\surf)$
is the universal cover of the groupoid $\eg(\surf)$.
\note{
Note that the short exact sequence \eqref{eq:ses2} can in fact be identified with \eqref{eq:ses ST}.
}
\goodbreak
\section{Stability conditions via quadratic differentials}
\subsection{Quadratic differentials}
We recall the relationship between marked surfaces and quadratic differential, following \cite{BS}.
Let $\xx$ be a compact Riemann surface and $\omega_\xx$
be its holomorphic cotangent bundle.
A \emph{meromorphic quadratic differential} $\phi$ on $\xx$ is a meromorphic section
of the line bundle $\omega_{\xx}^{2}$.
In terms of a local coordinate $z$ on $\xx$,
such a $\phi$  can be written as $\phi(z)=g(z)\, \dd z^2$,
where $g(z)$ is a meromorphic function.

We will only consider \emph{GMN differentials} $\phi$ on $\xx$,
which, in our case (i.e. an unpunctured marked surface),
are meromorphic quadratic differential such that
\begin{itemize}
\item all zeroes of $\phi$ are simple.
\item every pole of $\phi$ has order at least three.
\end{itemize}
Denote by $\Zer(\phi)$ the set of zeroes of $\phi$,
$\Pol_j(\phi)$ the set of poles of $\phi$ with order $j$
and $\Crit(\phi)=\Zer(\phi)\cup\Pol(\phi)$.

At a point of $\xx^\circ=\xx \setminus \Crit(\phi)$,
there is a  distinguished local coordinate $\omega$,
uniquely defined up to transformations of the form $\omega \mapsto \pm\, \omega+\operatorname{const}$,
with respect to which
$\phi(\omega)=\dd \omega \otimes \dd \omega$.
In terms of a local coordinate $z$, we have $\omega=\int \sqrt{g(z)}\dd z$.
A GMN differential $\phi$ on $\xx$ determines the $\phi$-metric on $\surp$, which is defined locally
by pulling back the  Euclidean metric on $\CC$ using a distinguished coordinate $\omega$.
Thus, there are geodesics on $\surp$ and each geodesics have a constant phase with respect to $\omega$.

A (horizontal) \emph{trajectory} of a GMN differential $\phi$ on $\surp$
is a maximal horizontal geodesic $\gamma\colon(0,1)\to\surp$,
with respect to the $\phi$ metric.
When $\lim\gamma(t)$ exists in $\xx$ as $t\to0$ (resp. $t\to1$),
the limits are called the left (resp. right) endpoint of $\gamma$.
The trajectories of a meromeorphic quadratic differential $\phi$
provide the \emph{horizontal foliation} on $\xx$.
There are several cases (cf. \cite[\S~3]{BS}.
\begin{itemize}
\item For a simple zero of $\phi$ on $\xx$, the local trajectory structure is shown in
Figure~\ref{fig:simple zp}.
\begin{figure}[ht]\centering
\begin{tikzpicture}[scale=.4]
\foreach \k in {1,2,0}
{    \path (120*\k+30+210:4) coordinate (v2)
          (120*\k+120+30+210:4) coordinate (v1)
          (120*\k+60+30+210:2) coordinate (v3)
          (0,0) coordinate (v4);
  \draw[thick](v2)to(v4)to(v1);
  \foreach \j in {.36,.54,.72}
    {
      \path (v4)--(v3) coordinate[pos=\j] (m0);
      \draw[Emerald] plot [smooth,tension=.5] coordinates {(v1)(m0)(v2)};
    }
}
\foreach \k in {1,2,0}{\draw[white,fill=white](120*\k+30+210:4)circle(.6);}
\draw[red,fill=white](0,0)circle(.1);
\end{tikzpicture}
\caption{Local trajectories at a simple zero}\label{fig:simple zp}
\end{figure}
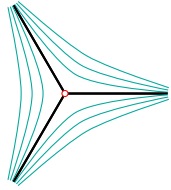
\item For poles of $\phi$ with order $3,4,5$ on $\xx$,
the local trajectory structures are shown in the pictures of Figure~\ref{fig:poles}.
In general, there is a neighbourhood $U$ of each pole $P$ (with order $m$) in $\xx$
and a collection of $m-2$ distinguished tangent directions $\{v_j\}$ at $P$,
such that any trajectory entering $U$ will eventually tends to $P$ and
becomes asymptotic to one of the $v_j$.
\end{itemize}

\begin{figure}[ht]\centering
\begin{tikzpicture}[scale=.32,rotate=0]
\draw[dashed, thin](0,0)circle(4);
\draw[very thick](0,0)to(0:4)(-90:4.5);
\draw[Emerald] (0,0)
    .. controls +(0:2) and +(195:1) ..(15:4);
\draw[Emerald] (0,0)
    .. controls +(0:2) and +(165:1) ..(-15:4);
\draw[Emerald] (0,0)
    .. controls +(0:3) and +(225:.5) ..(45:4);
\draw[Emerald] (0,0)
    .. controls +(0:3) and +(125:.5) ..(-45:4);

\draw[Emerald] (0,0)
    .. controls +(0:3) and +(0:1.5) ..(90:2.8)
    .. controls +(180:1.5) and +(90:2) ..(180:3.2)
    .. controls +(-90:2) and +(180:1.5) ..(-90:2.8)
    .. controls +(0:1.5) and +(0:3) .. (0,0);
\draw[Emerald] (0,0)
    .. controls +(0:2) and +(0:1) ..(90:1.8)
    .. controls +(180:1) and +(90:1) ..(180:2.2)
    .. controls +(-90:1) and +(180:1) ..(-90:1.8)
    .. controls +(0:1) and +(0:2) .. (0,0);
\draw[Emerald] (0,0)
    .. controls +(0:1) and +(0:1) ..(90:1)
    .. controls +(180:1) and +(90:.3) ..(180:1.2)
    .. controls +(-90:.3) and +(180:1) ..(-90:1)
    .. controls +(0:1) and +(0:1) .. (0,0);
\draw[NavyBlue](0,0)node{$\bullet$};
\end{tikzpicture}\quad
\begin{tikzpicture}[scale=.32,rotate=0]
\draw[dashed, thin](0,0)circle(4);
\draw[very thick](180:4)to(0:4)(-90:4.5);
\draw[Emerald] (0,0)
    .. controls +(0:2) and +(195:1) ..(15:4);
\draw[Emerald] (0,0)
    .. controls +(0:2) and +(165:1) ..(-15:4);
\draw[Emerald] (0,0)
    .. controls +(180:2) and +(15:1) ..(195:4);
\draw[Emerald] (0,0)
    .. controls +(180:2) and +(-15:1) ..(165:4);

\draw[Emerald] (0,0)
    .. controls +(0:3) and +(0:3) ..(90:3.5)
    .. controls +(180:3) and +(180:3) .. (0,0);
\draw[Emerald] (0,0)
    .. controls +(0:2) and +(0:2) ..(90:2.5)
    .. controls +(180:2) and +(180:2) .. (0,0);
\draw[Emerald] (0,0)
    .. controls +(0:1) and +(0:1) ..(90:1.5)
    .. controls +(180:1) and +(180:1) .. (0,0);
\draw[Emerald] (0,0)
    .. controls +(0:3) and +(0:3) ..(-90:3.5)
    .. controls +(180:3) and +(180:3) .. (0,0);
\draw[Emerald] (0,0)
    .. controls +(0:2) and +(0:2) ..(-90:2.5)
    .. controls +(180:2) and +(180:2) .. (0,0);
\draw[Emerald] (0,0)
    .. controls +(0:1) and +(0:1) ..(-90:1.5)
    .. controls +(180:1) and +(180:1) .. (0,0);

\draw[NavyBlue](0,0)node{$\bullet$};
\end{tikzpicture}\quad
\begin{tikzpicture}[scale=.32,rotate=0]
\draw[dashed, thin](0,0)circle(4)(-90:4.5);
\foreach \j in {0,1,2}
{
  \draw[very thick](120*\j+0:4)to(0,0);
\draw[Emerald] (0,0)
    .. controls +(120*\j+0:2) and +(120*\j+195:1) ..(120*\j+15:4);
\draw[Emerald] (0,0)
    .. controls +(120*\j+0:2) and +(120*\j+165:1) ..(120*\j+-15:4);

\draw[Emerald] (0,0)
    .. controls +(0+120*\j:1) and +(-30+120*\j:3) ..(60+120*\j:3)
    .. controls +(150+120*\j:3) and +(120+120*\j:1) .. (0,0);
\draw[Emerald] (0,0)
    .. controls +(0+120*\j:.3) and +(-30+120*\j:2) ..(60+120*\j:2)
    .. controls +(150+120*\j:2) and +(120+120*\j:.3) .. (0,0);
}
\draw[NavyBlue](0,0)node{$\bullet$};
\end{tikzpicture}
\caption{Local trajectories at poles of order $3,4,5$}\label{fig:poles}
\end{figure}
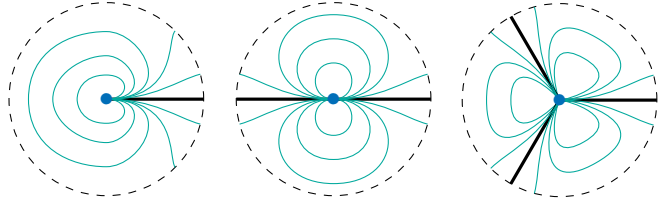

The real (oriented) blow-up of $(\xx,\phi)$ is a differentiable surface $\xx^\phi$,
which is obtained from the underlying differentiable surface by replacing a pole $P\in\Pol(\phi)$
with order at least 3 by a boundary $\partial_P$,
where the points on the boundary correspond to the real tangent directions at $P$.
Furthermore, we will mark the points on $\partial_P$ that correspond to the distinguished tangent directions,
so there are $\operatorname{ord}_\phi(P)-2$ marked points on $\partial_P$.
Thus $\xx^\phi$ is a marked surface, diffeomorphic to some $(\surf, M)$ as encountered already.

Denote by $\Diff(\surf)$ the group of diffeomorphism of $\surf$
that preserves the set $\M$ of marked points setwise
and $\Diff_0(\surf)$ the connected component of $\Diff(\surf)$ containing the identity.
The mapping class group is defined as $\MCG(\surf)=\Diff(\surf)/\Diff_0(\surf)$.

\begin{definition}\label{def:SFquad}
An \emph{$\surf$-framed quadratic differential} $(\xx,\phi,\psi)$
is a Riemann surface $\xx$ with GMN differential $\phi$,
equipped with a diffeomorphism $\psi\colon\surf \to\xx^\phi$,
preserving the marked points.

Two $\surf$-framed quadratic differentials $(\xx_1,\phi_1,\psi_1)$ and $(\xx_2,\phi_2,\psi_2)$
are equivalent, if there exists a biholomorphism $f\colon\xx_1\to\xx_2$
such that $f^*(\phi_2)=\phi_1$ and furthermore $\psi_2^{-1}\circ f_*\circ\psi_1\in\Diff_0(\surf)$,
where $f_*\colon\xx_1^{\phi_1}\to\xx_2^{\phi_2}$ is the induced diffeomorphism.
\end{definition}

We denote by $\FQuad{}{\surf}$ the moduli space of $\surf$-framed quadratic differentials.

\begin{remark}
As $\Diff(\surf)$ acts on $\FQuad{}{\surf}$ by pre-composition with $\psi$
and $\Diff_0(\surf)$ acts trivially,
we deduce that $\MCG(\surf)$ acts on $\FQuad{}{\surf}$ in a natural way.
\AKedit{Without the framing $\psi$, the group $\MCG(\surf)$ does not act on the data $(\xx, \phi)$.}

The analogous description of the unframed moduli space $\Quad(\surf)$ of quadratic differentials on $\surf$,
which appears in \cite{BS}, places no restriction on $\psi_2^{-1}\circ f_*\circ\psi_1$
and hence, in effect, the framing $\psi$ carries no information.
As a consequence, we have
\begin{equation}\label{eq:SFquad}
\Quad(\surf)=\FQuad{}{\surf}/\MCG(\surf).
\end{equation}
\end{remark}

\begin{definition}\label{def:SoFQuad}
The decorated real blow-up $\xx^\phi_\Tri$ of $(\xx,\phi)$ is
the decorated marked surface obtained from $\xx^\phi$ by adding the set $\Zer(\phi)$ as decorations.

Given any decorated marked surface $\surfo$,
an \emph{$\surfo$-framed quadratic differential} $(\xx,\phi,\psi)$
is a Riemann surface $\xx$ with GMN differential $\phi$,
equipped with a diffeomorphism $\psi\colon\surfo \to\xx^\phi_\Tri$,
preserving the marked points and decorations.

Equivalence is defined in the same way as in Definition~\ref{def:SFquad},
but with the restriction $\psi_2^{-1}\circ f_*\circ\psi_1\in\Diff_0(\surfo)$,
the identity component of the group $\Diff(\surfo)$ of diffeomorphisms preserving marked points and decorations (each setwise).
\end{definition}

We denote by $\FQuad{}{\surfo}$ the moduli space of $\surfo$-framed quadratic differentials.
As before, $\MCG(\surfo)=\Diff(\surfo)/\Diff_0(\surfo)$ acts on
$\FQuad{}{\surfo}$ and we have
\begin{equation}\label{eq:SoFquad}
\Quad(\surf)=\FQuad{}{\surfo}/\MCG(\surfo).
\end{equation}

\begin{lemma}\label{lem:SBr}
The kernel of the forgetful map $\MCG(\surfo)\to\MCG(\surf)$ is
the surface braid group $\SBr(\surfo)$,
that is, the fundamental group of the configuration space of $|\Tri|$ points in (the interior of) $\surf$,
based at the set $\Tri$.
\end{lemma}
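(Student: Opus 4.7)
The plan is to derive the identification from the long exact sequence of homotopy groups for a Birman-style evaluation fibration. Consider the evaluation map
\[
ev\colon \Diff(\surf) \longrightarrow \Conf_{|\Tri|}\bigl(\surf\setminus\partial\surf\bigr), \qquad \phi \longmapsto \phi(\Tri),
\]
where $\Conf_n$ denotes the (unordered) configuration space of $n$ distinct points. Any $\phi\in\Diff(\surf)$ preserves $\M\subset\partial\surf$ setwise, and therefore also preserves the interior, so $ev$ is well-defined; by the standard isotopy-extension theorem (Palais--Cerf), $ev$ is a Serre fibration whose fibre over $\Tri$ is precisely the subgroup of $\Diff(\surf)$ preserving $\Tri$ setwise, namely $\Diff(\surfo)$. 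Surjectivity is immediate, since any configuration in $\mathrm{int}(\surf)$ can be moved to $\Tri$ by a compactly supported diffeomorphism of the interior, extended by the identity.

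Passing to the long exact sequence in homotopy at the basepoints $\id$ and $\Tri$ produces
\[
\pi_1\bigl(\Diff(\surf),\id\bigr) \longrightarrow \pi_1\bigl(\Conf_{|\Tri|}(\surf\setminus\partial\surf),\Tri\bigr) \longrightarrow \pi_0(\Diff(\surfo)) \longrightarrow \pi_0(\Diff(\surf)) \longrightarrow \pi_0\bigl(\Conf_{|\Tri|}\bigr).
\]
By definition, $\pi_0(\Diff(\surfo))=\MCG(\surfo)$ and $\pi_0(\Diff(\surf))=\MCG(\surf)$, and the induced map between them is exactly $\FM$ from \eqref{eq:FM}; the second term equals $\SBr(\surfo)$ by the definition given in the statement of the lemma; and $\pi_0(\Conf_{|\Tri|})=1$ because the interior of the connected surface $\surf$ remains connected after removing finitely many points. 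This already yields a surjection $\SBr(\surfo)\twoheadrightarrow\ker\FM$.

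The main obstacle is to upgrade this surjection to the asserted isomorphism, which amounts to showing that the preceding map $\pi_1(\Diff(\surf),\id)\to\SBr(\surfo)$ is trivial. The cleanest route is to invoke the classical contractibility theorems of Earle--Eells and Earle--Schatz: for a compact oriented surface with non-empty boundary and at least one marked point on each boundary component (our standing hypotheses), the identity component $\Diff_0(\surf)$ is contractible, since the marked points anchor each boundary circle. In particular $\pi_1(\Diff(\surf),\id)=1$, so the connecting homomorphism $\SBr(\surfo)\hookrightarrow\MCG(\surfo)$ is injective and its image is precisely $\ker\FM$, as claimed.
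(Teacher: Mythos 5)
Your argument is correct, and it takes a different route from the paper's. The paper first restricts to the subgroups fixing $\M$ pointwise, replaces these by the mapping class groups fixing $\partial\surf$ pointwise, and then simply cites the known Birman exact sequence for surface braid groups from the survey of Guaschi--Juan-Pineda; you instead run the underlying argument directly on the setwise-$\M$-preserving groups, via the evaluation fibration $\Diff(\surf)\to\Conf_{|\Tri|}(\surf\setminus\partial\surf)$ with fibre $\Diff(\surfo)$ and its long exact sequence. This is more self-contained and avoids the paper's (slightly delicate) reduction to the boundary-fixing case, but it shifts the burden onto the input $\pi_1(\Diff(\surf),\id)=1$. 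That input is correct under the standing hypotheses, but note that Earle--Schatz is literally a statement about $\Diff_0$ of a surface \emph{rel} its boundary; to get simple connectivity of the identity component of the group preserving $\M\subset\partial\surf$ only setwise, you should add the routine extra step of fibering over $\Diff_0(\partial\surf,\M)$ (a contractible product of $\Diff_0(S^1,\mathrm{pt})$'s, precisely because each boundary circle carries a marked point) with fibre handled by Earle--Schatz. With that one sentence supplied, your proof is complete; the exactness bookkeeping identifying $\ker\FM$ with the image of the point-pushing map, and the injectivity of that map, are exactly as you state.
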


\begin{proof}
Since $\MCG(\surfo)$ and $\MCG(\surf)$ act in the same way on the marked points $\M$,
we can restrict to the subgroups which fix $\M$ pointwise.
However, any such (orientation preserving) diffeomorphism is isotopic to one that fixes the boundary
$\partial\surf$ pointwise.
Hence the required kernel is the same as the kernel of the forgetful map
\[ \MCG(\surfo,\partial\surf)\to\MCG(\surf,\partial\surf)
\]
between the mapping class groups that fix the boundary.
This kernel is well-known to be the surface braid group $\SBr(\surfo)$,
e.g. by \cite[\S~2.4 (5)]{GJP}.
\end{proof}

By Lemma~\ref{lem:SBr},  we have the following commutative diagram of quotients.

\begin{equation}\label{eq:quad0}
\begin{tikzpicture}[xscale=1.6,yscale=0.8,baseline=(bb.base)]
\path (0,1) node (bb) {}; 
\draw (0,2) node (s0) {$\FQuad{}{\surfo}$}
 (0,0) node (s1) {$\FQuad{}{\surf}$}
 (2.5,1) node (s2) {$\Quad(\surf)$};
\draw [->, font=\scriptsize]
 (s0) edge node [left] {$\SBr(\surfo)$} (s1)
 (s0) edge node [above] {$\MCG(\surfo)$} (s2)
 (s1) edge node [below] {$\MCG(\surf)$} (s2);
\end{tikzpicture}
\end{equation}

We will see that $\FQuad{}{\surfo}$ and $\FQuad{}{\surf}$ are both manifolds
and the vertical quotient is a Galois covering.
However, $\FQuad{}{\surfo}$ is not connected,
so we must see how to understand its connected components.

We do this by embedding the exchange graphs $\EG(\surf)$ and $\EG(\surfo)$
in $\FQuad{}{\surf}$ and $\FQuad{}{\surfo}$, respectively.

\subsection{WKB triangulations}

There are the following types of trajectories of a GMN differential $\phi$ in our case:
\begin{itemize}
\item \emph{saddle trajectories} whose both ends are in $\Zer(\phi)$;
\item \emph{separating trajectories} with one end in $\Zer(\phi)$ and the other
in $\Pol(\phi)$;
\item \emph{generic trajectories} whose both ends are in $\Pol(\phi)$;
\end{itemize}
By removing all separating trajectories (which are finitely many) from $\surp$,
the remaining open surface splits as a disjoint union of connected components.
Each component is one of the following types \Note{(cf. \cite[\S3]{BS})}:
\begin{itemize}
\item a \emph{half-plane}, i.e. is isomorphic to
$\{z\in \CC \mid \Imgy(z)>0\}$
equipped with the differential $\dd z^{2}$.
It is swept out by generic trajectories which connect a fixed pole to itself.
\item
a \emph{horizontal strip}, i.e. is isomorphic to
$\{z\in \CC\mid a<\Imgy(z)<b\}$
equipped with the differential $\dd z^{2}$ for some $a<b \in \RR$.
It is swept out by generic trajectories connecting two (not necessarily distinct) poles.
\end{itemize}
We call this union the \emph{horizontal strip decomposition} of $\xx$ with respect to $\phi$.
A GMN differential $\phi$ on $\xx$ is \emph{saddle-free}, if it has no saddle trajectory.
Similarly, a framed quadratic differential on $\surf$ (or $\surfo$) is saddle-free
if the corresponding GMN differential is saddle-free.
Note the following:
\begin{itemize}
\item In each horizontal strip, the trajectories are \AKedit{isotopic} to each other.
\item the boundary of any \AKedit{component} consists of separating trajectories.
\item In each horizontal strip, there is \AKedit{a} unique geodesic,
the \emph{saddle connection}, connecting the two zeroes on its boundary.
\item For a saddle-free GMN differential $\phi$ on $\xx$,
we have $\Pol(\phi)\neq\emptyset$. Then
$\phi$ has no closed or recurrent trajectories by \cite[Lemma~3.1]{BS}.
Thus, in the horizontal strip decomposition of $\xx$ with respect to $\phi$,
there is only half-planes and horizontal strips.
\end{itemize}

By construction, the generic trajectories on $\xx$ (with respect to $\phi$)
are inherited by $\surf$, for any $\psi\colon\surf\to\xx^\phi$, and
all trajectories on $\xx$ (with respect to $\phi$)
are inherited by $\surfo$, for any $\Psi\colon\surfo\to\xx^\phi$.
\note{
For instance, the generic trajectories become open arcs on $\surf$ (as well as on $\surfo$)
and saddle trajectories becomes closed arcs on $\surfo$.
}

\begin{definition}\cite{BS}
Let $\psi\colon\surf\to\xx^\phi$ be an $\surf$-framed quadratic differential,
which is saddle-free.
Then there is a \emph{WKB triangulation} $\RT_\psi$ on $\surf$ induced from $\psi$,
where the arcs are (isotopy classes of inherited) generic trajectories.
\note{Moreover, each triangle in $\RT_\psi$ contains exactly one zero,
so $\RT_\psi$ becomes a decorated triangulation $\T_\psi$ of $\surfo$,
with dual graph consisting of saddle trajectories.}
\end{definition}

\begin{example}
Figure~\ref{fig:logo} shows several local horizontal strip decompositions on $\xx$ or $\surf$ corresponding to the arcs in a square and its flip, including two `wall crossings',
when there is a saddle trajectory. 
Note that, in the pictures
\begin{itemize}
\item the blue vertices are poles or marked points,
\item the red vertices are simple zeroes,
\item the green arcs are geodesics,
\item the black arcs are separating trajectories,
\item the red dotted arcs are the saddle connections in the horizontal strips;
the red solid arcs are saddle trajectories.
\end{itemize}
Moreover, Figure~\ref{fig:logo} actually demonstrates
a loop in the moduli space $\FQuad{}{\surf}$
which corresponds to the braid twist of the saddle connection
in the center of each picture in the figure.
\end{example}

\begin{figure}[ht]\centering
\begin{tikzpicture}[xscale=-.25,yscale=.25]\draw[Emerald!60,thin](-45:5)--(-225:5);
    \path (45:5) coordinate (v2)
          (135:5) coordinate (v1)
          (22.5:1.666) coordinate (v3)
          (0,5) coordinate (v4);
  \foreach \j in {.1, .18, .26, .34, .42, .5,.58, .66, .74, .82, .9}
    {
      \path (v3)--(v4) coordinate[pos=\j] (m0);
      \draw[Emerald!60, thin] plot [smooth,tension=.3] coordinates {(v1)(m0)(v2)};
    }
\draw[](v4)to(v1)to(v3)to(v2)to(v4);
    \path (-45:5) coordinate (v1)
          (-135:5) coordinate (v2)
          (202.5:1.666) coordinate (v3)
          (0,-5) coordinate (v4);
  \foreach \j in {.1, .18, .26, .34, .42, .5,.58, .66, .74, .82, .9}
    {
      \path (v3)--(v4) coordinate[pos=\j] (m0);
      \draw[Emerald!60, thin] plot [smooth,tension=.3] coordinates {(v1)(m0)(v2)};
    }
\draw[](v4)to(v1)to(v3)to(v2)to(v4);
    \path (-45:5) coordinate (v2)
          (135:5) coordinate (v1)
          (22.5:1.666) coordinate (v3)
          (202.5:1.666) coordinate (v4);
  \foreach \j in {.13,.26,.39,.87,.74,.61}
    {
      \path (v3)--(v4) coordinate[pos=\j] (m0);
      \draw[Emerald!60, thin] plot [smooth,tension=.3] coordinates {(v1)(m0)(v2)};
    }
\draw[](v4)to(v1)to(v3)to(v2)to(v4);
    \path (45:5) coordinate (v1)
          (-45:5) coordinate (v2)
          (22.5:1.666) coordinate (v3)
          (5,0) coordinate (v4);
  \foreach \j in {.1,.2,.3,.4,.5,.6,.7,.8,.9}
    {
      \path (v3)--(v4) coordinate[pos=\j] (m0);
      \draw[Emerald!60, thin] plot [smooth,tension=.3] coordinates {(v1)(m0)(v2)};
    }
\draw[](v4)to(v1)to(v3)to(v2)to(v4);
    \path (-135:5) coordinate (v2)
          (135:5) coordinate (v1)
          (202.5:1.666) coordinate (v3)
          (-5,0) coordinate (v4);
  \foreach \j in {.1,.2,.3,.4,.5,.6,.7,.8,.9}
    {
      \path (v3)--(v4) coordinate[pos=\j] (m0);
      \draw[Emerald!60, thin] plot [smooth,tension=.3] coordinates {(v1)(m0)(v2)};
    }
\draw[](v4)to(v1)to(v3)to(v2)to(v4);
\draw[NavyBlue,thin, dashed]
    (-45:5)node{$\bullet$}(45:5)node{$\bullet$}(135:5)node{$\bullet$}(-135:5)node{$\bullet$}(-45:5)(135:5);
\draw[red,dashed]
  (-5,0)node{$\bullet$}node[white]{\tiny{$\bullet$}}node{\tiny{$\circ$}}to
  (202.5:1.666)node{$\bullet$}node[white]{\tiny{$\bullet$}}node{\tiny{$\circ$}}to
  (22.5:1.666)node{$\bullet$}node[white]{\tiny{$\bullet$}}node{\tiny{$\circ$}}to
  (5,0)node{$\bullet$}node[white]{\tiny{$\bullet$}}node{\tiny{$\circ$}}
  (0,5)node{$\bullet$}node[white]{\tiny{$\bullet$}}node{\tiny{$\circ$}}to(22.5:1.666)
  (0,-5)node{$\bullet$}node[white]{\tiny{$\bullet$}}node{\tiny{$\circ$}}to(202.5:1.666);
\end{tikzpicture}
\begin{tikzpicture}[scale=0.250, rotate=0]
\draw[->=stealth](-5,-7)to(5,-7);
\draw(0,-10)node{};
\draw[](0,-8)node{wall crossing};
    \path (45:5) coordinate (v2)
          (135:5) coordinate (v1)
          (0:1.666) coordinate (v3)(-180:1.666) coordinate (w3)
          (0,5) coordinate (v4);\draw[red](v3)to(w3);
  \foreach \j in {.1, .18, .26, .34, .42, .5,.58, .66, .74}
    {
      \path (v3)--(v4) coordinate[pos=\j] (m0);
      \path (w3)--(v4) coordinate[pos=\j] (m1);
      \draw[Emerald!60, thin] plot [thick, smooth,tension=.3] coordinates {(v1)(m1)(m0)(v2)};
    }
  \foreach \j in { .82, .9}
    {
      \path (v3)--(v4) coordinate[pos=\j] (m0);
      \path (w3)--(v4) coordinate[pos=\j] (m1);
      \path (m1)--(m0) coordinate[pos=.5] (m1);
      \draw[Emerald!60, thin] plot [thick, smooth,tension=.3] coordinates {(v1)(m1)(v2)};
    }
\draw[](v4)to(v1)to(w3)to(v3)to(v2)to(v4);
    \path (-45:5) coordinate (v2)
          (-135:5) coordinate (v1)
          (0:1.666) coordinate (v3)(-180:1.666) coordinate (w3)
          (0,-5) coordinate (v4);
  \foreach \j in {.1, .18, .26, .34, .42, .5,.58, .66, .74}
    {
      \path (v3)--(v4) coordinate[pos=\j] (m0);
      \path (w3)--(v4) coordinate[pos=\j] (m1);
      \draw[Emerald!60, thin] plot [thick, smooth,tension=.3] coordinates {(v1)(m1)(m0)(v2)};
    }
  \foreach \j in { .82, .9}
    {
      \path (v3)--(v4) coordinate[pos=\j] (m0);
      \path (w3)--(v4) coordinate[pos=\j] (m1);
      \path (m1)--(m0) coordinate[pos=.5] (m1);
      \draw[Emerald!60, thin] plot [thick, smooth,tension=.3] coordinates {(v1)(m1)(v2)};
    }
\draw[](v4)to(v1)to(w3)to(v3)to(v2)to(v4);

    \path (45:5) coordinate (v1)
          (-45:5) coordinate (v2)
          (0:1.666) coordinate (v3)
          (5,0) coordinate (v4);
  \foreach \j in {.1,.2,.3,.4,.5,.6,.7,.8,.9}
    {
      \path (v3)--(v4) coordinate[pos=\j] (m0);
      \draw[Emerald!60, thin] plot [smooth,tension=.3] coordinates {(v1)(m0)(v2)};
    }
\draw[](v4)to(v1)to(v3)to(v2)to(v4);
    \path (-135:5) coordinate (v2)
          (135:5) coordinate (v1)
          (-180:1.666) coordinate (v3)
          (-5,0) coordinate (v4);
  \foreach \j in {.1,.2,.3,.4,.5,.6,.7,.8,.9}
    {
      \path (v3)--(v4) coordinate[pos=\j] (m0);
      \draw[Emerald!60, thin] plot [smooth,tension=.3] coordinates {(v1)(m0)(v2)};
    }
\draw[](v4)to(v1)to(v3)to(v2)to(v4);
\draw[NavyBlue,thin, dashed]
    (-45:5)node{$\bullet$}(45:5)node{$\bullet$}(135:5)node{$\bullet$}(-135:5)node{$\bullet$}(-45:5)(135:5);
\draw[white,ultra thick](-180:1.666)to(0:1.666);
\draw[red,thick](-2,0)to(2,0);
\draw[red,dashed](-180:1.666)to(0,5)to(0:1.666);
\draw[red,dashed](-180:1.666)to(0,-5)to(0:1.666);
\draw[red,dashed]
  (-5,0)node{$\bullet$}node[white]{\tiny{$\bullet$}}node{\tiny{$\circ$}}to
  (-180:1.666)node{$\bullet$}node[white]{\tiny{$\bullet$}}node{\tiny{$\circ$}}
  (0:1.666)node{$\bullet$}node[white]{\tiny{$\bullet$}}node{\tiny{$\circ$}}to
  (5,0)node{$\bullet$}node[white]{\tiny{$\bullet$}}node{\tiny{$\circ$}}
  (0,5)node{$\bullet$}node[white]{\tiny{$\bullet$}}node{\tiny{$\circ$}}
  (0,-5)node{$\bullet$}node[white]{\tiny{$\bullet$}}node{\tiny{$\circ$}};
\draw[red,dashed](-180:1.666)node{$\bullet$}node[white]{\tiny{$\bullet$}}node{\tiny{$\circ$}}
  (0:1.666)node{$\bullet$}node[white]{\tiny{$\bullet$}}node{\tiny{$\circ$}};
\end{tikzpicture}
\begin{tikzpicture}[xscale=.25,yscale=.25]\draw[Emerald!60,thin](-45:5)--(-225:5);
    \path (45:5) coordinate (v2)
          (135:5) coordinate (v1)
          (22.5:1.666) coordinate (v3)
          (0,5) coordinate (v4);
  \foreach \j in {.1, .18, .26, .34, .42, .5,.58, .66, .74, .82, .9}
    {
      \path (v3)--(v4) coordinate[pos=\j] (m0);
      \draw[Emerald!60, thin] plot [smooth,tension=.3] coordinates {(v1)(m0)(v2)};
    }
\draw[](v4)to(v1)to(v3)to(v2)to(v4);
    \path (-45:5) coordinate (v1)
          (-135:5) coordinate (v2)
          (202.5:1.666) coordinate (v3)
          (0,-5) coordinate (v4);
  \foreach \j in {.1, .18, .26, .34, .42, .5,.58, .66, .74, .82, .9}
    {
      \path (v3)--(v4) coordinate[pos=\j] (m0);
      \draw[Emerald!60, thin] plot [smooth,tension=.3] coordinates {(v1)(m0)(v2)};
    }
\draw[](v4)to(v1)to(v3)to(v2)to(v4);
    \path (-45:5) coordinate (v2)
          (135:5) coordinate (v1)
          (22.5:1.666) coordinate (v3)
          (202.5:1.666) coordinate (v4);
  \foreach \j in {.13,.26,.39,.87,.74,.61}
    {
      \path (v3)--(v4) coordinate[pos=\j] (m0);
      \draw[Emerald!60, thin] plot [smooth,tension=.3] coordinates {(v1)(m0)(v2)};
    }
\draw[](v4)to(v1)to(v3)to(v2)to(v4);
    \path (45:5) coordinate (v1)
          (-45:5) coordinate (v2)
          (22.5:1.666) coordinate (v3)
          (5,0) coordinate (v4);
  \foreach \j in {.1,.2,.3,.4,.5,.6,.7,.8,.9}
    {
      \path (v3)--(v4) coordinate[pos=\j] (m0);
      \draw[Emerald!60, thin] plot [smooth,tension=.3] coordinates {(v1)(m0)(v2)};
    }
\draw[](v4)to(v1)to(v3)to(v2)to(v4);
    \path (-135:5) coordinate (v2)
          (135:5) coordinate (v1)
          (202.5:1.666) coordinate (v3)
          (-5,0) coordinate (v4);
  \foreach \j in {.1,.2,.3,.4,.5,.6,.7,.8,.9}
    {
      \path (v3)--(v4) coordinate[pos=\j] (m0);
      \draw[Emerald!60, thin] plot [smooth,tension=.3] coordinates {(v1)(m0)(v2)};
    }
\draw[](v4)to(v1)to(v3)to(v2)to(v4);
\draw[NavyBlue,thin, dashed]
    (-45:5)node{$\bullet$}(45:5)node{$\bullet$}(135:5)node{$\bullet$}(-135:5)node{$\bullet$}(-45:5)(135:5);
\draw[red,dashed]
  (-5,0)node{$\bullet$}node[white]{\tiny{$\bullet$}}node{\tiny{$\circ$}}to
  (202.5:1.666)node{$\bullet$}node[white]{\tiny{$\bullet$}}node{\tiny{$\circ$}}to
  (22.5:1.666)node{$\bullet$}node[white]{\tiny{$\bullet$}}node{\tiny{$\circ$}}to
  (5,0)node{$\bullet$}node[white]{\tiny{$\bullet$}}node{\tiny{$\circ$}}
  (0,5)node{$\bullet$}node[white]{\tiny{$\bullet$}}node{\tiny{$\circ$}}to(22.5:1.666)
  (0,-5)node{$\bullet$}node[white]{\tiny{$\bullet$}}node{\tiny{$\circ$}}to(202.5:1.666);
\end{tikzpicture}

\begin{tikzpicture}[xscale=-.25,yscale=.25]\draw[Emerald!60,thin](-45:5)--(-225:5);
    \path (45:5) coordinate (v2)
          (135:5) coordinate (v1)
          (45:5/3) coordinate (v3)
          (0,5) coordinate (v4);
  \foreach \j in {.1, .18, .26, .34, .42, .5,.58, .66, .74, .82, .9}
    {
      \path (v3)--(v4) coordinate[pos=\j] (m0);
      \draw[Emerald!60, thin] plot [smooth,tension=.3] coordinates {(v1)(m0)(v2)};
    }
\draw[](v4)to(v1)to(v3)to(v2)to(v4);
    \path (-45:5) coordinate (v1)
          (-135:5) coordinate (v2)
          (-135:5/3) coordinate (v3)
          (0,-5) coordinate (v4);
  \foreach \j in {.1, .18, .26, .34, .42, .5,.58, .66, .74, .82, .9}
    {
      \path (v3)--(v4) coordinate[pos=\j] (m0);
      \draw[Emerald!60, thin] plot [smooth,tension=.3] coordinates {(v1)(m0)(v2)};
    }
\draw[](v4)to(v1)to(v3)to(v2)to(v4);
    \path (-45:5) coordinate (v2)
          (135:5) coordinate (v1)
          (45:5/3) coordinate (v3)
          (-135:5/3) coordinate (v4);
  \foreach \j in {.13,.26,.39,.87,.74,.61}
    {
      \path (v3)--(v4) coordinate[pos=\j] (m0);
      \draw[Emerald!60, thin] plot [smooth,tension=.3] coordinates {(v1)(m0)(v2)};
    }
\draw[](v4)to(v1)to(v3)to(v2)to(v4);
    \path (45:5) coordinate (v1)
          (-45:5) coordinate (v2)
          (45:5/3) coordinate (v3)
          (5,0) coordinate (v4);
  \foreach \j in {.1,.2,.3,.4,.5,.6,.7,.8,.9}
    {
      \path (v3)--(v4) coordinate[pos=\j] (m0);
      \draw[Emerald!60, thin] plot [smooth,tension=.3] coordinates {(v1)(m0)(v2)};
    }
\draw[](v4)to(v1)to(v3)to(v2)to(v4);
    \path (-135:5) coordinate (v2)
          (135:5) coordinate (v1)
          (-135:5/3) coordinate (v3)
          (-5,0) coordinate (v4);
  \foreach \j in {.1,.2,.3,.4,.5,.6,.7,.8,.9}
    {
      \path (v3)--(v4) coordinate[pos=\j] (m0);
      \draw[Emerald!60, thin] plot [smooth,tension=.3] coordinates {(v1)(m0)(v2)};
    }
\draw[](v4)to(v1)to(v3)to(v2)to(v4);
\draw[NavyBlue,thin, dashed]
    (-45:5)node{$\bullet$}(45:5)node{$\bullet$}(135:5)node{$\bullet$}(-135:5)node{$\bullet$}(-45:5)(135:5);
\draw[red,dashed]
  (-5,0)node{$\bullet$}node[white]{\tiny{$\bullet$}}node{\tiny{$\circ$}}to
  (-135:5/3)node{$\bullet$}node[white]{\tiny{$\bullet$}}node{\tiny{$\circ$}}to
  (45:5/3)node{$\bullet$}node[white]{\tiny{$\bullet$}}node{\tiny{$\circ$}}to
  (5,0)node{$\bullet$}node[white]{\tiny{$\bullet$}}node{\tiny{$\circ$}}
  (0,5)node{$\bullet$}node[white]{\tiny{$\bullet$}}node{\tiny{$\circ$}}to(45:5/3)
  (0,-5)node{$\bullet$}node[white]{\tiny{$\bullet$}}node{\tiny{$\circ$}}to(-135:5/3);
\end{tikzpicture}
\begin{tikzpicture}[scale=.67]
\draw[->=stealth,white](-4,-1)to(-4,1);\draw(0,-2)node{};
\draw[<-=stealth,white](4,-1)to(4,1);
\draw[Emerald!60,thick](2.2,-.6)rectangle(1,.6)to(2.2,-.6);
\draw[Emerald!60,thick](-2.2,-.6)rectangle(-1,.6)to(-2.2,-.6);
\draw[->=stealth](-.7,.2)to(.7,.2);
\draw[->=stealth](.7,-.2)to(-.7,-.2);
\draw[NavyBlue](1,.6)node{$\bullet$}(2.2,-.6)node{$\bullet$}
               (2.2,.6)node{$\bullet$}(1,-.6)node{$\bullet$}
               (-1,.6)node{$\bullet$}(-2.2,-.6)node{$\bullet$}
               (-2.2,.6)node{$\bullet$}(-1,-.6)node{$\bullet$};
\end{tikzpicture}
\begin{tikzpicture}[xscale=.25,yscale=.25]\draw[Emerald!60,thin](-45:5)--(-225:5);
    \path (45:5) coordinate (v2)
          (135:5) coordinate (v1)
          (45:5/3) coordinate (v3)
          (0,5) coordinate (v4);
  \foreach \j in {.1, .18, .26, .34, .42, .5,.58, .66, .74, .82, .9}
    {
      \path (v3)--(v4) coordinate[pos=\j] (m0);
      \draw[Emerald!60, thin] plot [smooth,tension=.3] coordinates {(v1)(m0)(v2)};
    }
\draw[](v4)to(v1)to(v3)to(v2)to(v4);
    \path (-45:5) coordinate (v1)
          (-135:5) coordinate (v2)
          (-135:5/3) coordinate (v3)
          (0,-5) coordinate (v4);
  \foreach \j in {.1, .18, .26, .34, .42, .5,.58, .66, .74, .82, .9}
    {
      \path (v3)--(v4) coordinate[pos=\j] (m0);
      \draw[Emerald!60, thin] plot [smooth,tension=.3] coordinates {(v1)(m0)(v2)};
    }
\draw[](v4)to(v1)to(v3)to(v2)to(v4);
    \path (-45:5) coordinate (v2)
          (135:5) coordinate (v1)
          (45:5/3) coordinate (v3)
          (-135:5/3) coordinate (v4);
  \foreach \j in {.13,.26,.39,.87,.74,.61}
    {
      \path (v3)--(v4) coordinate[pos=\j] (m0);
      \draw[Emerald!60, thin] plot [smooth,tension=.3] coordinates {(v1)(m0)(v2)};
    }
\draw[](v4)to(v1)to(v3)to(v2)to(v4);
    \path (45:5) coordinate (v1)
          (-45:5) coordinate (v2)
          (45:5/3) coordinate (v3)
          (5,0) coordinate (v4);
  \foreach \j in {.1,.2,.3,.4,.5,.6,.7,.8,.9}
    {
      \path (v3)--(v4) coordinate[pos=\j] (m0);
      \draw[Emerald!60, thin] plot [smooth,tension=.3] coordinates {(v1)(m0)(v2)};
    }
\draw[](v4)to(v1)to(v3)to(v2)to(v4);
    \path (-135:5) coordinate (v2)
          (135:5) coordinate (v1)
          (-135:5/3) coordinate (v3)
          (-5,0) coordinate (v4);
  \foreach \j in {.1,.2,.3,.4,.5,.6,.7,.8,.9}
    {
      \path (v3)--(v4) coordinate[pos=\j] (m0);
      \draw[Emerald!60, thin] plot [smooth,tension=.3] coordinates {(v1)(m0)(v2)};
    }
\draw[](v4)to(v1)to(v3)to(v2)to(v4);
\draw[NavyBlue,thin, dashed]
    (-45:5)node{$\bullet$}(45:5)node{$\bullet$}(135:5)node{$\bullet$}(-135:5)node{$\bullet$}(-45:5)(135:5);
\draw[red,dashed]
  (-5,0)node{$\bullet$}node[white]{\tiny{$\bullet$}}node{\tiny{$\circ$}}to
  (-135:5/3)node{$\bullet$}node[white]{\tiny{$\bullet$}}node{\tiny{$\circ$}}to
  (45:5/3)node{$\bullet$}node[white]{\tiny{$\bullet$}}node{\tiny{$\circ$}}to
  (5,0)node{$\bullet$}node[white]{\tiny{$\bullet$}}node{\tiny{$\circ$}}
  (0,5)node{$\bullet$}node[white]{\tiny{$\bullet$}}node{\tiny{$\circ$}}to(45:5/3)
  (0,-5)node{$\bullet$}node[white]{\tiny{$\bullet$}}node{\tiny{$\circ$}}to(-135:5/3);
\end{tikzpicture}

\begin{tikzpicture}[xscale=.25,yscale=.25,rotate=90]\draw[Emerald!60,thin](-45:5)--(-225:5);
    \path (45:5) coordinate (v2)
          (135:5) coordinate (v1)
          (22.5:1.666) coordinate (v3)
          (0,5) coordinate (v4);
  \foreach \j in {.1, .18, .26, .34, .42, .5,.58, .66, .74, .82, .9}
    {
      \path (v3)--(v4) coordinate[pos=\j] (m0);
      \draw[Emerald!60, thin] plot [smooth,tension=.3] coordinates {(v1)(m0)(v2)};
    }
\draw[](v4)to(v1)to(v3)to(v2)to(v4);
    \path (-45:5) coordinate (v1)
          (-135:5) coordinate (v2)
          (202.5:1.666) coordinate (v3)
          (0,-5) coordinate (v4);
  \foreach \j in {.1, .18, .26, .34, .42, .5,.58, .66, .74, .82, .9}
    {
      \path (v3)--(v4) coordinate[pos=\j] (m0);
      \draw[Emerald!60, thin] plot [smooth,tension=.3] coordinates {(v1)(m0)(v2)};
    }
\draw[](v4)to(v1)to(v3)to(v2)to(v4);
    \path (-45:5) coordinate (v2)
          (135:5) coordinate (v1)
          (22.5:1.666) coordinate (v3)
          (202.5:1.666) coordinate (v4);
  \foreach \j in {.13,.26,.39,.87,.74,.61}
    {
      \path (v3)--(v4) coordinate[pos=\j] (m0);
      \draw[Emerald!60, thin] plot [smooth,tension=.3] coordinates {(v1)(m0)(v2)};
    }
\draw[](v4)to(v1)to(v3)to(v2)to(v4);
    \path (45:5) coordinate (v1)
          (-45:5) coordinate (v2)
          (22.5:1.666) coordinate (v3)
          (5,0) coordinate (v4);
  \foreach \j in {.1,.2,.3,.4,.5,.6,.7,.8,.9}
    {
      \path (v3)--(v4) coordinate[pos=\j] (m0);
      \draw[Emerald!60, thin] plot [smooth,tension=.3] coordinates {(v1)(m0)(v2)};
    }
\draw[](v4)to(v1)to(v3)to(v2)to(v4);
    \path (-135:5) coordinate (v2)
          (135:5) coordinate (v1)
          (202.5:1.666) coordinate (v3)
          (-5,0) coordinate (v4);
  \foreach \j in {.1,.2,.3,.4,.5,.6,.7,.8,.9}
    {
      \path (v3)--(v4) coordinate[pos=\j] (m0);
      \draw[Emerald!60, thin] plot [smooth,tension=.3] coordinates {(v1)(m0)(v2)};
    }
\draw[](v4)to(v1)to(v3)to(v2)to(v4);
\draw[NavyBlue,thin, dashed]
    (-45:5)node{$\bullet$}(45:5)node{$\bullet$}(135:5)node{$\bullet$}(-135:5)node{$\bullet$}(-45:5)(135:5);
\draw[red,dashed]
  (-5,0)node{$\bullet$}node[white]{\tiny{$\bullet$}}node{\tiny{$\circ$}}to
  (202.5:1.666)node{$\bullet$}node[white]{\tiny{$\bullet$}}node{\tiny{$\circ$}}to
  (22.5:1.666)node{$\bullet$}node[white]{\tiny{$\bullet$}}node{\tiny{$\circ$}}to
  (5,0)node{$\bullet$}node[white]{\tiny{$\bullet$}}node{\tiny{$\circ$}}
  (0,5)node{$\bullet$}node[white]{\tiny{$\bullet$}}node{\tiny{$\circ$}}to(22.5:1.666)
  (0,-5)node{$\bullet$}node[white]{\tiny{$\bullet$}}node{\tiny{$\circ$}}to(202.5:1.666);
\draw(-10,0)node{};
\end{tikzpicture}
\begin{tikzpicture}[scale=0.250, rotate=90]
\draw[->=stealth](7,-6)to(7,6);
\draw[](8,0)node{wall crossing};
    \path (45:5) coordinate (v2)
          (135:5) coordinate (v1)
          (0:1.666) coordinate (v3)(-180:1.666) coordinate (w3)
          (0,5) coordinate (v4);\draw[red](v3)to(w3);
  \foreach \j in {.1, .18, .26, .34, .42, .5,.58, .66, .74}
    {
      \path (v3)--(v4) coordinate[pos=\j] (m0);
      \path (w3)--(v4) coordinate[pos=\j] (m1);
      \draw[Emerald!60, thin] plot [thick, smooth,tension=.3] coordinates {(v1)(m1)(m0)(v2)};
    }
  \foreach \j in { .82, .9}
    {
      \path (v3)--(v4) coordinate[pos=\j] (m0);
      \path (w3)--(v4) coordinate[pos=\j] (m1);
      \path (m1)--(m0) coordinate[pos=.5] (m1);
      \draw[Emerald!60, thin] plot [thick, smooth,tension=.3] coordinates {(v1)(m1)(v2)};
    }
\draw[](v4)to(v1)to(w3)to(v3)to(v2)to(v4);
    \path (-45:5) coordinate (v2)
          (-135:5) coordinate (v1)
          (0:1.666) coordinate (v3)(-180:1.666) coordinate (w3)
          (0,-5) coordinate (v4);
  \foreach \j in {.1, .18, .26, .34, .42, .5,.58, .66, .74}
    {
      \path (v3)--(v4) coordinate[pos=\j] (m0);
      \path (w3)--(v4) coordinate[pos=\j] (m1);
      \draw[Emerald!60, thin] plot [thick, smooth,tension=.3] coordinates {(v1)(m1)(m0)(v2)};
    }
  \foreach \j in { .82, .9}
    {
      \path (v3)--(v4) coordinate[pos=\j] (m0);
      \path (w3)--(v4) coordinate[pos=\j] (m1);
      \path (m1)--(m0) coordinate[pos=.5] (m1);
      \draw[Emerald!60, thin] plot [thick, smooth,tension=.3] coordinates {(v1)(m1)(v2)};
    }
\draw[](v4)to(v1)to(w3)to(v3)to(v2)to(v4);

    \path (45:5) coordinate (v1)
          (-45:5) coordinate (v2)
          (0:1.666) coordinate (v3)
          (5,0) coordinate (v4);
  \foreach \j in {.1,.2,.3,.4,.5,.6,.7,.8,.9}
    {
      \path (v3)--(v4) coordinate[pos=\j] (m0);
      \draw[Emerald!60, thin] plot [smooth,tension=.3] coordinates {(v1)(m0)(v2)};
    }
\draw[](v4)to(v1)to(v3)to(v2)to(v4);
    \path (-135:5) coordinate (v2)
          (135:5) coordinate (v1)
          (-180:1.666) coordinate (v3)
          (-5,0) coordinate (v4);
  \foreach \j in {.1,.2,.3,.4,.5,.6,.7,.8,.9}
    {
      \path (v3)--(v4) coordinate[pos=\j] (m0);
      \draw[Emerald!60, thin] plot [smooth,tension=.3] coordinates {(v1)(m0)(v2)};
    }
\draw[](v4)to(v1)to(v3)to(v2)to(v4);
\draw[NavyBlue,thin, dashed]
    (-45:5)node{$\bullet$}(45:5)node{$\bullet$}(135:5)node{$\bullet$}(-135:5)node{$\bullet$}(-45:5)(135:5);
\draw[red,dashed](-180:1.666)to(0,5)to(0:1.666);
\draw[red,dashed](-180:1.666)to(0,-5)to(0:1.666);
\draw[red,dashed]
  (-5,0)node{$\bullet$}node[white]{\tiny{$\bullet$}}node{\tiny{$\circ$}}to
  (-180:1.666)node{$\bullet$}node[white]{\tiny{$\bullet$}}node{\tiny{$\circ$}}
  (0:1.666)node{$\bullet$}node[white]{\tiny{$\bullet$}}node{\tiny{$\circ$}}to
  (5,0)node{$\bullet$}node[white]{\tiny{$\bullet$}}node{\tiny{$\circ$}}
  (0,5)node{$\bullet$}node[white]{\tiny{$\bullet$}}node{\tiny{$\circ$}}
  (0,-5)node{$\bullet$}node[white]{\tiny{$\bullet$}}node{\tiny{$\circ$}};
\draw[white,ultra thick](-180:1.666)to(0:1.666);
\draw[red,thick](-2,0)to(2,0);
\draw[red,dashed](-180:1.666)node{$\bullet$}node[white]{\tiny{$\bullet$}}node{\tiny{$\circ$}}
  (0:1.666)node{$\bullet$}node[white]{\tiny{$\bullet$}}node{\tiny{$\circ$}};
\end{tikzpicture}
\begin{tikzpicture}[xscale=.25,yscale=-.25,rotate=90]\draw[Emerald!60,thin](-45:5)--(-225:5)(10,0);
    \path (45:5) coordinate (v2)
          (135:5) coordinate (v1)
          (22.5:1.666) coordinate (v3)
          (0,5) coordinate (v4);
  \foreach \j in {.1, .18, .26, .34, .42, .5,.58, .66, .74, .82, .9}
    {
      \path (v3)--(v4) coordinate[pos=\j] (m0);
      \draw[Emerald!60, thin] plot [smooth,tension=.3] coordinates {(v1)(m0)(v2)};
    }
\draw[](v4)to(v1)to(v3)to(v2)to(v4);
    \path (-45:5) coordinate (v1)
          (-135:5) coordinate (v2)
          (202.5:1.666) coordinate (v3)
          (0,-5) coordinate (v4);
  \foreach \j in {.1, .18, .26, .34, .42, .5,.58, .66, .74, .82, .9}
    {
      \path (v3)--(v4) coordinate[pos=\j] (m0);
      \draw[Emerald!60, thin] plot [smooth,tension=.3] coordinates {(v1)(m0)(v2)};
    }
\draw[](v4)to(v1)to(v3)to(v2)to(v4);
    \path (-45:5) coordinate (v2)
          (135:5) coordinate (v1)
          (22.5:1.666) coordinate (v3)
          (202.5:1.666) coordinate (v4);
  \foreach \j in {.13,.26,.39,.87,.74,.61}
    {
      \path (v3)--(v4) coordinate[pos=\j] (m0);
      \draw[Emerald!60, thin] plot [smooth,tension=.3] coordinates {(v1)(m0)(v2)};
    }
\draw[](v4)to(v1)to(v3)to(v2)to(v4);
    \path (45:5) coordinate (v1)
          (-45:5) coordinate (v2)
          (22.5:1.666) coordinate (v3)
          (5,0) coordinate (v4);
  \foreach \j in {.1,.2,.3,.4,.5,.6,.7,.8,.9}
    {
      \path (v3)--(v4) coordinate[pos=\j] (m0);
      \draw[Emerald!60, thin] plot [smooth,tension=.3] coordinates {(v1)(m0)(v2)};
    }
\draw[](v4)to(v1)to(v3)to(v2)to(v4);
    \path (-135:5) coordinate (v2)
          (135:5) coordinate (v1)
          (202.5:1.666) coordinate (v3)
          (-5,0) coordinate (v4);
  \foreach \j in {.1,.2,.3,.4,.5,.6,.7,.8,.9}
    {
      \path (v3)--(v4) coordinate[pos=\j] (m0);
      \draw[Emerald!60, thin] plot [smooth,tension=.3] coordinates {(v1)(m0)(v2)};
    }
\draw[](v4)to(v1)to(v3)to(v2)to(v4);
\draw[NavyBlue,thin, dashed]
    (-45:5)node{$\bullet$}(45:5)node{$\bullet$}(135:5)node{$\bullet$}(-135:5)node{$\bullet$}(-45:5)(135:5);
\draw[red,dashed]
  (-5,0)node{$\bullet$}node[white]{\tiny{$\bullet$}}node{\tiny{$\circ$}}to
  (202.5:1.666)node{$\bullet$}node[white]{\tiny{$\bullet$}}node{\tiny{$\circ$}}to
  (22.5:1.666)node{$\bullet$}node[white]{\tiny{$\bullet$}}node{\tiny{$\circ$}}to
  (5,0)node{$\bullet$}node[white]{\tiny{$\bullet$}}node{\tiny{$\circ$}}
  (0,5)node{$\bullet$}node[white]{\tiny{$\bullet$}}node{\tiny{$\circ$}}to(22.5:1.666)
  (0,-5)node{$\bullet$}node[white]{\tiny{$\bullet$}}node{\tiny{$\circ$}}to(202.5:1.666);
\end{tikzpicture}
\caption{A loop in $\FQuad{}{\surf}$}
\label{fig:logo}
\end{figure}


We consider the top two parts of the stratification of $\FQuad{}{\surf}$
analogous to the stratification of $\Quad(\surf)$ from \cite[\S~5]{BS}:
\begin{align*}
 F_0(\surf) &=\{ [\xx,\phi,\psi] \in \FQuad{}{\surf} \mid \text{$\phi$ has no saddle trajectories} \},\\
 F_2(\surf) &=\{ [\xx,\phi,\psi] \in \FQuad{}{\surf} \mid \text{$\phi$ has exactly one saddle trajectory} \}.
\end{align*}
Then $B_0(\surf) :=F_0(\surf)$ is open and dense, and $F_2(\surf)$ has codimension 1.
Furthermore,  $B_2(\surf) := F_0(\surf)\cup F_2(\surf)$ is also open and dense, and
has complement of codimension 2.

Let $\cub(\RT)$ be the subspace in $\FQuad{}{\surf}$ consisting of
those saddle-free $[\psi]$ whose WKB triangulation is $\RT$.
Then
\begin{gather}\label{eq:Squad*}
    B_0(\surf)=\bigcup_{\RT\in\EG(\surf)} \cub(\RT).
\end{gather}
By the argument of \cite[Prop~4.9]{BS}, we can see that $\cub(\RT)\isom\UHP^{\RT}$,
where
\[
  \UHP=\{z\in\CC\mid \Imgy(z)>0\}\subset\CC
\]
is the (strict) upper half plane.
\danger{
The coordinates $(u_\gamma)_{\gamma\in T}$ give the complex modulus of the horizontal strip with generic trajectory in the isotopy class $\gamma$ (see \cite[\S4.5]{BS} for more detail).
}
Thus the $U(\RT)$ are precisely the connected components of $B_0(\surf)$.

The boundary of $\cub(\RT)$ meets $F_2(\surf)$ in $2\numarc$ connected components,
which we denote $\partial^\sharp_\gamma\cub(\RT)$ and $\partial^\flat_\gamma\cub(\RT)$,
where the coordinate $u_\gamma$ goes to the negative or positive real axis, respectively.
Note that $u_\gamma$ cannot go to zero because that would correspond to two zeroes of $\phi$ coming together.

Similarly, in $\FQuad{}{\surfo}$ there are cells $\cub(\T)$ and a stratification
$\{B_p(\surfo)\}$.
We can import a key lemma from \cite{BS}, where the original statement is actually for $\Quad(\surf)$.

\begin{lemma}\cite[Prop~5.8]{BS}
Any path in $\FQuad{}{\surf}$ is homotopic (relative to its end-points) to a path in $B_2(\surf)$.
Therefore there is a surjective map $\pi_1 B_2(\surf) \to \pi_1\FQuad{}{\surf}$.
The same holds replacing $\surf$ by $\surfo$.
\end{lemma}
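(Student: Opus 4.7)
The plan is to show that the complement $\FQuad{}{\surf}\setminus B_2(\surf)$ has real codimension at least $2$ inside the manifold $\FQuad{}{\surf}$, and then invoke the standard general position argument: any smooth path in a manifold can be perturbed, relative to its endpoints, to avoid any closed subset of real codimension at least $2$, so the inclusion $B_2(\surf)\hookrightarrow\FQuad{}{\surf}$ automatically induces a surjection on $\pi_1$.

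First I would identify the complement of $B_2(\surf)$ as the locus of framed differentials carrying two or more saddle trajectories that are non-homotopic in the horizontal foliation. On each cell $\cub(\RT)\cong\UHP^{\RT}$ from \eqref{eq:Squad*}, the boundary component $\partial^\sharp_\gamma\cub(\RT)$ or $\partial^\flat_\gamma\cub(\RT)$ along which a specific saddle trajectory in the class dual to $\gamma$ appears is cut out by a single real equation of the form $\Imgy u_\gamma = 0$ in the period coordinate. Thus a single saddle trajectory is a real codimension one condition, while two independent saddle classes impose two such conditions, giving real codimension at least $2$.

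The hard part will be establishing that distinct saddle trajectory classes really do impose independent real conditions near any point of $\FQuad{}{\surf}\setminus B_2(\surf)$, i.e.\ that the real differentials of the relevant period functions at such a point are linearly independent in the tangent space. This is a local analytic computation using the explicit description of deformations of quadratic differentials by periods (cf.\ \cite[\S4.5]{BS}) together with the fact that the saddle connections in question represent linearly independent classes in relative homology with respect to $\Zer(\phi)$. I would carry this out by working in a small polydisc in period coordinates around a degenerate differential with $k\geq 2$ saddle trajectories, writing the bad locus as the zero set of $k$ independent real-linear functionals.

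Granted the codimension bound, the rest is soft. Any smooth path $\alpha\colon[0,1]\to\FQuad{}{\surf}$, whose endpoints we may arrange to lie in the open dense subset $B_0(\surf)\subset B_2(\surf)$, is homotopic through an arbitrarily small transversality perturbation to a smooth path disjoint from the codimension $\geq 2$ stratum $\FQuad{}{\surf}\setminus B_2(\surf)$, that is, to a path in $B_2(\surf)$. This yields the required surjectivity on $\pi_1$. The $\surfo$ case follows by an identical argument, or alternatively by pulling back along the Galois covering $\FQuad{}{\surfo}\to\FQuad{}{\surf}$ in \eqref{eq:quad0}, noting that this covering preserves the stratification by number of saddle trajectories so that the complement of $B_2(\surfo)$ inherits the same codimension bound.
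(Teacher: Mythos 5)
The paper gives no proof of this lemma beyond the citation: it is imported wholesale from \cite[Prop.~5.8]{BS} (stated there for $\Quad(\surf)$) and transferred to the framed spaces via the coverings in \eqref{eq:quad0}. Your proposal instead attempts to reprove the Bridgeland--Smith statement from scratch, and its outline --- the complement of $B_2(\surf)$ is the locus with at least two saddle trajectories, this should have real codimension at least $2$, and a general-position perturbation rel endpoints then finishes --- is indeed the correct skeleton of their argument.

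However, the step you defer as ``the hard part'' is exactly where the proposal breaks, and not for the reason you anticipate. The statement you propose to verify --- that saddle connections whose classes are linearly independent in the homology of the spectral cover impose independent real conditions $\Imgy u=0$ --- is essentially automatic, since the periods are local coordinates on $\FQuad{}{\surf}$: independent classes give independent complex-linear, hence independent real-linear, functionals. The genuine difficulty is that two distinct saddle trajectories need \emph{not} have independent classes: the classes can be $\RR$-proportional (for instance for the boundary saddle trajectories of a ring domain, or for configurations containing closed trajectories, where a single differential can even carry infinitely many saddle trajectories). In that case the two period conditions coincide and your count yields only codimension $1$; handling these degenerate walls is the substantive content of \cite[\S5]{BS} leading up to Prop.~5.8. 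Two further points your transversality step silently assumes also require proof there: that the complement of $B_2(\surf)$ is closed (equivalently, that $B_2(\surf)$ is open), which rests on a finiteness statement for saddle connections of bounded length; and that the bad locus is only a \emph{countable} union of codimension-$\geq 2$ pieces indexed by homology classes, so one needs the Baire-type version of the general-position argument. None of this is fatal to the strategy --- it is all resolved in \cite{BS} --- but as written your argument covers only the generic stratum and does not yet prove the lemma.
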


Hence, we have the following result, showing that $\EG(\surf)$ is a \danger{skeleton} for $\FQuad{}{\surf}$.

\begin{lemma}\label{lem:iota_surf}
There is a canonical embedding
$\skel_\surf\colon\EG(\surf)\to\FQuad{}{\surf}$ whose image is dual to $B_2(\surf)$.
More precisely,
the embedding is unique up to homotopy, satisfying
\begin{itemize}
\item for each triangulation $\RT\in\EG(\surf)$, the point $\skel_\surf(\RT)$ is in $\cub(\RT)$,
\item for each flip $x\colon\RT\to\tilt{\RT}{\sharp}{\gamma}$, the path $\skel_\surf(x)$ is in
$\cub(\RT) \;\cup\; \partial^\sharp_\gamma\cub(\RT) \;\cup\; \cub(\tilt{\RT}{\sharp}{\gamma})$,
connecting $\skel_\surf(\RT)$ to $\skel_\surf(\tilt{\RT}{\sharp}{\gamma})$ and intersecting
$\partial^\sharp_\gamma\cub(\RT)$ at exactly one point,
\end{itemize}
Moreover, $\skel_{\surf}$ induces a surjective map
\[\pi_1\EG(\surf)\cong\pi_1B_2(\surf)\to\pi_1\FQuad{}{\surf}.\]
\end{lemma}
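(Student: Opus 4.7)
The plan is to build the embedding $\skel_\surf$ vertex-by-vertex and then edge-by-edge, mirroring the stratification $B_2(\surf) = B_0(\surf) \cup F_2(\surf) = \bigsqcup_\RT \cub(\RT) \;\cup\; F_2(\surf)$. Each cell $\cub(\RT)\cong\UHP^\RT$ is open and contractible, so I would begin by choosing arbitrary basepoints $\skel_\surf(\RT)\in\cub(\RT)$; any two such choices are connected inside $\cub(\RT)$ by a path that is unique up to homotopy rel endpoints.

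For the edges, I would analyse the codimension-one walls. On $\cub(\RT)\cong\UHP^\RT$, the coordinate $u_\gamma$ is the complex modulus of the horizontal strip whose generic trajectories are isotopic to $\gamma$, and it can degenerate by approaching either $\RR_{<0}$ or $\RR_{>0}$, giving the boundary strata $\partial^\sharp_\gamma\cub(\RT)$ and $\partial^\flat_\gamma\cub(\RT)$ in $F_2(\surf)$. Along $\partial^\sharp_\gamma\cub(\RT)$ the strip indexed by $\gamma$ collapses to two half-planes glued along a single horizontal saddle trajectory, and I would verify via the local wall-crossing picture (Figure~\ref{fig:logo}) and the analysis in \cite{BS} that the WKB triangulation on the far side of this wall is precisely $\tilt{\RT}{\sharp}{\gamma}$; hence $\partial^\sharp_\gamma\cub(\RT)$ is also a boundary component of $\cub(\tilt{\RT}{\sharp}{\gamma})$. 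I would then define $\skel_\surf(x)$, for each forward flip $x\colon\RT\to\tilt{\RT}{\sharp}{\gamma}$, to be any path in $\cub(\RT)\cup\partial^\sharp_\gamma\cub(\RT)\cup\cub(\tilt{\RT}{\sharp}{\gamma})$ that crosses the wall transversally at exactly one point. Uniqueness up to homotopy rel endpoints then follows from the contractibility of the two cells together with the connectedness of the wall stratum, which itself fibres over the contractible space of residual strip moduli $\UHP^{\RT\setminus\{\gamma\}}$.

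For the $\pi_1$-surjection I would show that $B_2(\surf)$ deformation retracts onto $\skel_\surf(\EG(\surf))$: each cell $\cub(\RT)$ retracts to its basepoint, each wall interior retracts to the transverse crossing point of the corresponding edge, and these retractions glue compatibly because each $\cub(\RT)$ meets $F_2(\surf)$ only along the $2\numarc$ walls $\partial^\sharp_\gamma,\partial^\flat_\gamma$ for $\gamma\in\RT$. Composing the resulting isomorphism $\pi_1\EG(\surf)\cong\pi_1 B_2(\surf)$ with the preceding surjection $\pi_1 B_2(\surf)\twoheadrightarrow\pi_1\FQuad{}{\surf}$ yields the claim. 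The main obstacle lies in the wall-crossing step: one must verify carefully that (i) the WKB triangulation across $\partial^\sharp_\gamma\cub(\RT)$ really is the forward flip $\tilt{\RT}{\sharp}{\gamma}$ rather than $\tilt{\RT}{\flat}{\gamma}$ or an unrelated triangulation, and (ii) the wall stratum is connected, so that the anticlockwise endpoint-rotation convention defining the forward flip in Section~\ref{sec:EGS} is unambiguously matched by the sign of $u_\gamma$ on the $\sharp$-side.
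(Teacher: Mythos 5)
Your argument is correct and is essentially the proof the paper has in mind: the paper states this lemma without a written proof, treating it as a direct consequence of the cell-and-wall structure of $B_2(\surf)$ set up just beforehand (contractible cells $\cub(\RT)\isom\UHP^{\RT}$ meeting along the $2\numarc$ contractible walls $\partial^\sharp_\gamma\cub(\RT)$, $\partial^\flat_\gamma\cub(\RT)$, so that the dual graph is the oriented graph $\EG(\surf)$ with its doubled edges) together with the imported surjection $\pi_1 B_2(\surf)\twoheadrightarrow\pi_1\FQuad{}{\surf}$ from \cite[Prop~5.8]{BS}. The wall-crossing verification you flag as the main obstacle --- that the WKB triangulation across $\partial^\sharp_\gamma\cub(\RT)$ is the forward flip $\tilt{\RT}{\sharp}{\gamma}$, matching the anticlockwise convention --- is precisely what Figures~\ref{fig:F2-1} and~\ref{fig:logo} are designed to encode, so your fleshed-out version matches the intended argument.
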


Similarly, there is a canonical embedding
$\skel_{\surfo}\colon\EG(\surfo)\to\FQuad{}{\surfo}$.
As a corollary, connected components of $\EG(\surfo)$ are in one-to-one correspondence with
connected components of $\FQuad{}{\surfo}$.
For the rest of the paper, we will fix a triangulation $\T$ in $\EG(\surfo)$
and study the connected component $\EGT(\surfo)$
and the corresponding connected component $\FQuad{\T}{\surfo}$.
For later use, we record the corresponding lemma,
showing that $\EGT(\surfo)$ is a \danger{skeleton} for $\FQuad{\T}{\surfo}$.

\begin{lemma}\label{lem:f.d.1}
There is a (unique up to homotopy) canonical embedding
\begin{equation}\label{eq:Sembed}
\skel_{\surfo}\colon\EGT(\surfo)\to\FQuad{\T}{\surfo}
\end{equation}
whose image is dual to $B_2(\surfo)$
and which induces a surjective map
\[\skel_*\colon \pi_1\EGT(\surfo)\to\pi_1\FQuad{\T}{\surfo}.\]
\end{lemma}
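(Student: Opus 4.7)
My plan is to mimic directly the proof of Lemma~\ref{lem:iota_surf} in the decorated setting, exploiting the already-introduced stratification $\{B_p(\surfo)\}$ of $\FQuad{}{\surfo}$ with its top cells $\cub(\T) \cong \UHP^\T$. By construction, the $\cub(\T)$ are precisely the connected components of $B_0(\surfo)$, indexed by the vertices of $\EG(\surfo)$; likewise, each connected component of $F_2(\surfo)$ is adjacent to a well-defined pair of such cells, and these components correspond to the oriented edges of $\EG(\surfo)$, with a forward flip $\T\to\tilt{\T}{\sharp}{\gamma}$ corresponding to the wall $\partial^\sharp_\gamma\cub(\T)$. The correspondence between components of $\EG(\surfo)$ and $\FQuad{}{\surfo}$, already observed just before the lemma, will then ensure that the embedding of $\EGT(\surfo)$ lands in $\FQuad{\T}{\surfo}$.

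Concretely, I would define $\skel_\surfo$ cell-by-cell. For each vertex $\T$ of $\EGT(\surfo)$, pick a basepoint $\skel_\surfo(\T)\in \cub(\T)$; this is canonical up to homotopy since $\cub(\T) \cong \UHP^\T$ is contractible. For each forward flip $x\colon \T\to\tilt{\T}{\sharp}{\gamma}$, choose a path $\skel_\surfo(x)$ lying inside $\cub(\T) \cup \partial^\sharp_\gamma\cub(\T) \cup \cub(\tilt{\T}{\sharp}{\gamma})$ and crossing the separating wall exactly once. Uniqueness of $\skel_\surfo$ up to homotopy will then follow from the connectedness of each cell and each wall component.

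For surjectivity of $\skel_*$, I plan to combine two inputs. First, the imported result immediately preceding the lemma gives a surjection $\pi_1 B_2(\surfo) \to \pi_1 \FQuad{\T}{\surfo}$ on the relevant component. Second, a transversality argument will show that the inclusion of the image of $\skel_\surfo$ into $B_2(\surfo)$ already induces a surjection on $\pi_1$: any loop in $B_2(\surfo)$ may, after a small perturbation, be assumed to meet $F_2(\surfo)$ transversally in finitely many points, and can then be deformed within $B_2(\surfo)$ so as to traverse each wall crossing via the chosen arc $\skel_\surfo(x)$ and to move between crossings through the contractible interiors of the $\cub(\T)$. The main obstacle will be to justify this second step cleanly, in particular to verify that the local wall structure around each cell $\cub(\T)$ matches the $2\numarc$ forward/backward flips at $\T$, i.e. that each boundary component $\partial^\sharp_\gamma\cub(\T)$ and $\partial^\flat_\gamma\cub(\T)$ is connected and that these $2\numarc$ components are pairwise distinct. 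Once this local analysis, which is the direct decorated analogue of that of Bridgeland--Smith in \cite{BS}, is established, the rest of the statement will follow formally.
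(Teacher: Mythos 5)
Your proposal is correct and follows essentially the same route as the paper, which records this lemma as the direct decorated analogue of Lemma~\ref{lem:iota_surf}: the embedding is built cell-by-cell from the contractible cells $\cub(\T)\cong\UHP^{\T}$ and the wall components of $F_2(\surfo)$, and surjectivity on $\pi_1$ comes from the imported Bridgeland--Smith result that loops retract into $B_2(\surfo)$ together with the deformation of such loops onto the dual skeleton. The local wall analysis you flag as the remaining obstacle is exactly what the paper delegates to the arguments of \cite{BS}, so no further idea is needed.
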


\subsection{Stability conditions on triangulated categories}\label{sec:stab}
A \emph{stability function} (also called \emph{central charge}) on an abelian category $\hua{C}$
is a group homomorphism $Z\colon \Grot(\hua{C})\to\CC$ such that for any object $0\neq M\in\hua{C}$,
we have
\[
 Z(M) = m_Z(M) \cdot \exp (\mai \pi \pha_Z(M))
\]
for some \emph{mass} $m_Z(M) \in \RR_{>0}$ and \emph{phase} $\pha_Z(M)\in (0,1]$,
i.e. $Z(M)$ lies in the half-closed upper half-plane $\UHP\cup\RR_{<0}$.

We say that a non-zero object $M\in\hua{C}$ is \emph{semistable} (resp. \emph{stable})
with respect to $Z$ if every
non-zero proper subobject $L \subseteq M$ satisfies $\pha_Z(L) \leq \pha_Z(M)$
(resp. $\pha_Z(L)<\pha_Z(M)$).
Further, we say that a stability function $Z$ on $\hua{C}$ satisfies the \emph{HN-property},
if every non-zero object $M$ in $\hua{C}$ has a finite filtration
\[
 0=M_0 \subseteq M_1 \subseteq \cdots \subseteq M_k=M,
\]
whose factors $L_i = M_i/M_{i-1}$ are $Z$-semistable with
phases $\pha_Z(L_1)>\cdots>\pha_Z(L_k)$.

\begin{definition}\cite[Prop.~5.3]{B1}\label{pp:ss}
A \emph{stability condition} $\sigma$ on a triangulated category $\D$
consists of a heart $\h$ and
a stability function $Z$ on $\h$ with the HN-property.
We will say $\sigma$ is supported on $\h$.
Equivalently, a stability condition $\sigma=(Z,\h)$ consists of
a central charge $Z\colon\Grot(\D)\to\CC$
and a slicing $\slicing=\{\slicing(\pha) \mid \pha \in\RR\}$, such that
\begin{itemize}
\item $\Hom(M_1,M_2)=0$ if $M_i\in\slicing(\pha_i$) with $\pha_1>\pha_2$.
\item $\slicing(\pha+1)=\slicing(\pha)[1]$.
\item $Z(M)=m_M \cdot \exp (\mai \pi \pha)$ for some $m_M\in\RR_{>0}$, if $M\in\slicing(\pha)$.
\item any object admits an HN-filtration (see \cite[Def~5.1]{B1} for details).
\end{itemize}
\end{definition}

\AKedit{ The important fact proved in \cite{B1} is that,
for a triangulated category $\D$ whose Grothendieck group $K(\D)$ has finite rank,
the stability conditions (that satisfy the so-called support property) form a complex manifold,
denoted by $\Stab\D$, with dimension $\rank K(\D)$.}

There is also a canonical $\CC$-action on $\Stab\D$, which is given by (in terms of center charge and slicing)
\begin{equation}\label{eq:C-act}
  \theta \cdot (Z,\slicing)=(e^{-\mai\pi\theta}\cdot Z,\slicing_{-\operatorname{Re}(\theta)}),
  \quad \theta\in\CC
\end{equation}
where $\slicing_{x}(\pha)=\slicing(x+\pha)$, for some $x,\pha\in\RR$.

When $\Gamma$ is the Ginzburg dg algebra associated to some quiver with potential,
we denote by $\Stap(\Gamma)$ the principal component of $\Stab\D_{fd}(\Gamma)$,
that is, the connected component that includes those stability conditions supported on the canonical heart $\h_\Gamma$.

Recall (e.g. from \cite{B1,Q2,QW}) the following well-known cell structure of $\Stap(\Gamma)$.
Let $\h$ be a \emph{finite} heart in $\D_{fd}(\Gamma)$,
that is, a length category with finitely many simples.
Then the cell $\cub(\h)$ is the subspace in $\Stab\D_{fd}(\Gamma)$ consisting of
those stability conditions $\sigma=(\h,Z)$ whose
the central charge $Z$ takes values in $\UHP$.
The coordinates $\{Z(S) \mid S\in \Sim\h\}$ give an isomorphism $\cub(\h)\isom\UHP^{\Sim\h}$.
\Note{Recall from \S\ref{sec:heart} that $\Sim\h$ denotes the set of simples in $\h$.}
For each tilting $\h\to\h'=\tilt{\h}{\sharp}{S}$,
there is co-dimensional one wall where $Z(S)\in \RR_{>0}$,
\[
 \partial^\sharp_S\cub(\h) = \partial^\flat_{S[1]}\cub(\h')
 = \overline{\cub(\h)}\cap\overline{\cub(\h')}.
\]
Let
\[
    B_0(\Gamma)=\bigcup_{\h\in\EGp(\Gamma)} \cub(\h)
\]
and
\[
    B_2(\Gamma)=B_0(\Gamma)\cup\bigcup_{\h\in\EGp(\Gamma),\, S\in\Sim\h} \partial^\sharp_S\cub(\h).
\]
Note that $B_2(\Gamma)$ is connected as a space,
because $\EGp(\Gamma)$ is connected as a graph (by definition).
Thus $B_2(\Gamma)\subseteq \Stap(\Gamma)$ and, in particular,
$\cub(\h)\subseteq\Stap(\Gamma)$, for all $\h$ in $\EGp(\Gamma)$.

As in Lemma~\ref{lem:iota_surf} for framed quadratic differentials,
the cell structure determines an embedding of the exchange graph as a \danger{skeleton}
for the space of stability conditions, uniquely up to homotopy,
\begin{gather}\label{eq:embed}
    \skel_\Gamma:\EGp(\Gamma) \to \Stap(\Gamma),
\end{gather}
so that the image is dual to $B_2(\Gamma)$.
\note{
A more detailed discussion of this embedding can be found in \cite[\S3]{Q2}.
}

\begin{remark}
In a `finite type' component of spaces of stability conditions,
the gluing structure has been studied in more details
in \cite{QW}, where it is shown that such a component is always contractible.
Our case is rather `tame type', similar to \cite{HKK}.
In such a case, there is a surjection
$\skel_*:\pi_1\EGp(\Gamma) \to \pi_1\Stap(\Gamma)$, as proved in Lemma~\ref{lem:f.d.1}.
However, it is not clear whether this still holds in the general case.
\end{remark}

\subsection{Bridgeland-Smith theory}
In this section, we follow the theory developed in \cite{BS} that relates quadratic differentials and stability conditions and adapt it to our case.
Recall that they proved the following.
\begin{theorem}\cite[Thm.~11.2 and Thm.~9.9]{BS}\label{thm:BS}
There is an isomorphism of complex manifolds
\begin{equation}\label{eq:BS}
\Stap(\Gamma_\RT)/\Autp(\Gamma_\RT)\cong\Quad(\surf).
\end{equation}
In particular, we have the induced short exact sequence
\begin{equation}\label{eq:BS SES}
1\to\pi_1\Stap(\Gamma_\RT)\to\pi_1\Quad(\surf)\xrightarrow{\rho_*}\Autp(\Gamma_\RT)\to1.
\end{equation}
Furthermore, there is another short exact sequence
\begin{equation}\label{eq:BS 9.9}
1\to\ST(\Gamma_\RT)\to\Autp(\Gamma_\RT)\to\MCG(\surf)\to1.
\end{equation}
\end{theorem}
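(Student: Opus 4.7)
My plan is to follow the strategy of Bridgeland--Smith, constructing the isomorphism \eqref{eq:BS} explicitly via a period map and then deducing the short exact sequences by covering-space and group-theoretic arguments. The central construction associates to each stability condition $\sigma=(Z,\h)\in\Stap(\Gamma_\RT)$ a quadratic differential $\phi_\sigma$ on some Riemann surface $\xx$. On the open cell $\cub(\h)\cong\UHP^{\Sim\h}$, for a heart $\h$ arising (via \eqref{eq:EGp=EGT}) from a decorated triangulation $\T$ with underlying triangulation $\RT'$, the values $\{Z(S)\mid S\in\Sim\h\}$ are to be interpreted as the complex moduli of the horizontal strips of $\phi_\sigma$ in the WKB decomposition of type $\RT'$, and the dual graph $\T^*$ encodes the saddle connections. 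Gluing these local charts along the walls $\partial^\sharp_S\cub(\h)$ produces a holomorphic map $\Stap(\Gamma_\RT)\to\FQuad{}{\surf}$, which I will then compose with the quotient $\FQuad{}{\surf}\to\Quad(\surf)$ of \eqref{eq:SFquad}.

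First I would verify that this map is $\Autp(\Gamma_\RT)$-equivariant, so that it descends to the quotient. Surjectivity onto $\Quad(\surf)$ follows from the inverse construction: a saddle-free differential $\phi$ determines a WKB triangulation $\RT_\phi$ and hence, via Theorem~\ref{thm:BZ} and the canonical heart of the associated Ginzburg algebra, a stability condition with central charge given by the periods $\int\sqrt{\phi}$ along the saddle connections. One then extends across the wall strata $F_2(\surf)$ by continuity. Injectivity modulo $\Autp(\Gamma_\RT)$ reduces to showing that two stability conditions producing the same unframed differential differ by an autoequivalence, which is built from the relabelling of simples lifted through Theorem~\ref{thm:QQ}.

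The sequence \eqref{eq:BS SES} is then the standard exact sequence for the fundamental group of a quotient by a properly discontinuous group action, with $\rho_*$ the monodromy homomorphism. For \eqref{eq:BS 9.9}, I would define $\Autp(\Gamma_\RT)\to\MCG(\surf)$ by letting an autoequivalence $\Phi$ act on the set of reachable hearts, which under \eqref{eq:EGp=EGT} becomes a graph automorphism of $\uEG(\surf)$ and hence, by the Alexander method faithfulness of the $\MCG(\surf)$-action on triangulations, a mapping class; surjectivity comes from lifting each flip to a simple tilt. The kernel consists of those $\Phi$ acting trivially on $\uEG(\surf)$, which therefore act only on the decorations, and these are exactly $\ST(\Gamma_\RT)$ by Theorem~\ref{thm:QQ}.

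The main obstacle will be controlling the wall-crossing behaviour globally: one must verify that the local chart identifications agree across each codimension-one wall, including those where a saddle trajectory appears and the corresponding simple object changes, so that the period map extends to a holomorphic bijection on all of $\Stap(\Gamma_\RT)/\Autp(\Gamma_\RT)$. A secondary subtlety is matching the stratifications $\{B_0,B_2\}$ on the two sides, which requires the skeleton picture of Lemma~\ref{lem:f.d.1} on the differential side and the analogous cell-and-wall structure on the stability side induced by simple tilts. This is where the bulk of the original Bridgeland--Smith analysis lives, and I would simply import those technical results rather than reprove them from scratch.
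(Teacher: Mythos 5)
The paper offers no proof of this theorem: it is imported verbatim from Bridgeland--Smith (\cite[Thm.~11.2 and Thm.~9.9]{BS}), and your outline --- the period map on cells $\cub(\h)\cong\UHP^{\Sim\h}$ glued across walls, the covering-space exact sequence for \eqref{eq:BS SES}, and the action on triangulations defining $\Autp(\Gamma_\RT)\to\MCG(\surf)$ for \eqref{eq:BS 9.9} --- is a faithful summary of their argument, with the technical wall-crossing analysis deferred to \cite{BS} exactly as the paper defers it. This is essentially the same approach, so nothing further is needed, beyond noting that your one-line justification of surjectivity onto $\MCG(\surf)$ (``lifting each flip to a simple tilt'') is really the statement that a mapping class induces an isomorphism of the associated quivers with potential and hence a derived equivalence, which is again part of the cited \cite[Thm.~9.9]{BS}.
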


Here, $\Autp(\Gamma_\RT)=\Aut^*\D_{fd}(\Gamma_\RT)/\Aut_0$,
where $\Aut^*\D_{fd}(\Gamma_\RT)$ is the subgroup of the auto-equivalence group of $\D_{fd}(\Gamma_\RT)$ that preserves the principal component $\Stap(\Gamma_\RT)$ and $\Aut_0$ is its subgroup that acts trivially on $\Stap(\Gamma_\RT)$.
\AKedit{
Note that, in \eqref{eq:BS 9.9}, the map $\ST(\Gamma_\RT)\to \Autp(\Gamma_\RT)$
is injective, because $\ST(\Gamma_\RT)$ acts faithfully on $\EGp(\Gamma_\RT)$
and hence on $\Stap(\Gamma_\RT)$ (see Remark~\ref{rem:ST0triv}).
Thus, we also have
$\Autp(\Gamma_\RT)=\Aut^*\D_{fd}(\Gamma_\RT)$.
}

We prove the following upgraded version of Theorem~\ref{thm:BS} using Theorem~\ref{cor:QQ},
which relates the exchange graphs which are the \danger{skeleta} of the respective spaces.
Note that $\Quad(\surf)$ here is denoted $\Quad_\heartsuit(\surf,\M)$ in \cite{BS} and is really an orbifold.
It is realised there as an orbifold quotient of a certain space of framed quadratic differentials,
but with a different sort of framing to ours.
Here $\Quad(\surf)$ is naturally the orbifold quotient of $\FQuad{}{\surf}$ by $\MCG(\surf)$,
or of $\FQuad{}{\surfo}$ by $\MCG(\surfo)$; cf. \eqref{eq:quad0}.

\begin{theorem}\label{thm:Stab=Quad}
Let $\T$ be a triangulation of a decorated marked surface $\surfo$, with underlying marked surface $\surf$ and triangulation $\RT$ and let $\Gamma_\RT$ be the associated Ginzburg dg algebra.
As complex manifolds, there are isomorphisms
\begin{gather}\label{eq:stab/st=quad}
\kappa_\RT\colon\Stap(\Gamma_\RT)/\ST(\Gamma_\RT)\cong\FQuad{}{\surf},\\
     \label{eq:stab=quad}
     \kappa_\T\colon\Stap(\Gamma_\RT)\cong\FQuad{\T}{\surfo}.
\end{gather}
Furthermore the second isomorphism \eqref{eq:stab=quad} is compatible with the embeddings \eqref{eq:embed} and \eqref{eq:Sembed}, given the isomorphism \eqref{eq:EGp=EGT} \note{between the exchange graphs of triangulations and exchange graphs of hearts}.
\end{theorem}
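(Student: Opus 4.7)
The plan is to bootstrap from the Bridgeland--Smith isomorphism \eqref{eq:BS} by decomposing both sides along the short exact sequence \eqref{eq:BS 9.9} and the commutative diagram \eqref{eq:quad0}, then lifting once more to the decorated framing. In both steps, the key technical input is that everything in sight is a genuine manifold (not just an orbifold) because $\ST(\Gamma_\RT)$ acts faithfully on $\Stap(\Gamma_\RT)$ by Remark~\ref{rem:ST0triv}.

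For the intermediate isomorphism $\kappa_\RT$, observe that \eqref{eq:BS 9.9} presents $\Autp(\Gamma_\RT)$ as an extension of $\MCG(\surf)$ by $\ST(\Gamma_\RT)$, so BS's quotient factors as
\[
\Stap(\Gamma_\RT) \twoheadrightarrow \Stap(\Gamma_\RT)/\ST(\Gamma_\RT) \twoheadrightarrow \Stap(\Gamma_\RT)/\Autp(\Gamma_\RT) \cong \Quad(\surf),
\]
with the second map a $\MCG(\surf)$-Galois cover. On the differential side, \eqref{eq:quad0} gives $\FQuad{}{\surf}\twoheadrightarrow \Quad(\surf)$ as a $\MCG(\surf)$-Galois cover as well. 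Since both covers are connected and share the same deck group, they are canonically isomorphic, giving $\kappa_\RT$.

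For the lifted isomorphism $\kappa_\T$, I lift across the two further coverings. By Lemma~\ref{lem:SBr} and \eqref{eq:quad0}, $\FQuad{}{\surfo}\to\FQuad{}{\surf}$ is a Galois covering with group $\SBr(\surfo)$; restricting to the connected component $\FQuad{\T}{\surfo}$ cuts the deck group down to the subgroup that stabilizes this component, and by Lemma~\ref{lem:covering} together with Theorem~\ref{cor:QQ} (identifying $\EGT(\surfo)\cong\EGp(\Gamma_\RT)$), this stabilizer is exactly $\BT(\T)$. On the other side, $\Stap(\Gamma_\RT)\twoheadrightarrow \Stap(\Gamma_\RT)/\ST(\Gamma_\RT)$ is a $\ST(\Gamma_\RT)$-Galois cover. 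Applying the isomorphism $\iota_\T\colon\BT(\T)\xrightarrow{\sim}\ST(\Gamma_\RT)$ of Theorem~\ref{thm:QQ} and choosing basepoints $\T\leftrightarrow \h_{\Gamma_\RT}$ compatibly, I pull back $\kappa_\RT$ to the unique isomorphism $\kappa_\T$ at the top of the tower of covers.

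Compatibility with the embeddings $\skel_\Gamma$ and $\skel_\surfo$ follows from the local matching established by BS: each cell $\cub(\h)\subset\Stap(\Gamma_\RT)$ for a finite heart $\h$ corresponds to the cell $\cub(\T)\subset\FQuad{\T}{\surfo}$ indexed by the WKB triangulation, the central charge $Z(S_\gamma)$ for a simple $S_\gamma\in\Sim\h$ matching (up to the $\UHP$-parametrization) the complex modulus of the horizontal strip associated to the dual open arc $\gamma$. Cells, codimension-one walls, and their labelings are thus transported by $\kappa_\T$, which forces the two skeletal embeddings to agree up to homotopy under \eqref{eq:EGp=EGT}.

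The main obstacle is not constructing $\kappa_\T$ in the abstract but verifying that the three group identifications ($\iota_\T$, the restricted deck group equaling $\BT(\T)$, and the component stabilizer equaling $\BT(\surfo)$) fit together consistently. Concretely, one must check that a braid twist $\Bt{\eta}$ acting on $\FQuad{\T}{\surfo}$ corresponds under $\kappa_\T$ to the spherical twist $\twi_{S_\gamma}$ acting on $\Stap(\Gamma_\RT)$ for $\gamma$ dual to $\eta$ in $\T$, and this is exactly where Theorem~\ref{thm:QQ} and Theorem~\ref{cor:QQ} do the heavy lifting: the cell-level matching from \cite{BS} reduces the global check to a local check around a single saddle-trajectory wall crossing, which already appears in Figure~\ref{fig:logo}.
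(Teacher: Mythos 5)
Your overall strategy --- realising both $\Stap(\Gamma_\RT)$ and $\FQuad{\T}{\surfo}$ as covers of a common base and matching them through their deck groups --- is genuinely different from the paper's, and it has a gap at its central step. Two connected Galois covers of the same space are \emph{not} determined by the abstract isomorphism type of their deck groups: they correspond to normal subgroups of the fundamental group of the base, so what you must show is that the two monodromy homomorphisms $\pi_1\FQuad{}{\surf}\to\BT(\surfo)$ and $\pi_1\FQuad{}{\surf}\to\ST(\Gamma_\RT)$ are intertwined by $\iota_\T$, not merely that their targets are isomorphic. Your final paragraph acknowledges this, but the verification you propose is circular: you want to check that $\Bt{\eta}$ corresponds to $\twi_{S_\gamma}$ ``under $\kappa_\T$'', yet $\kappa_\T$ only exists once that check has been made. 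The check can be done non-circularly, by computing on both sides the monodromy of the wall-crossing loop of Figure~\ref{fig:logo}: on the quadratic-differential side it is a braid twist (cf.\ Figure~\ref{fig:flips}), on the stability side it is a spherical twist by \cite[(8.3)]{KQ}, and Theorem~\ref{thm:QQ} matches them; but as written your argument does not close. The same objection applies to your first step, where ``both covers are connected and share the same deck group, hence canonically isomorphic'' is false as stated; there it is compounded by the fact that $\Quad(\surf)$ is only an orbifold, so the quotients onto it are orbifold covers and ordinary covering-space theory does not apply verbatim --- one has to invoke the actual compatibility built into the Bridgeland--Smith construction of \eqref{eq:BS} and \eqref{eq:BS 9.9}.

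The paper sidesteps all of this by constructing $\kappa_\T$ directly rather than by lifting: starting from the graph isomorphism $\EGT(\surfo)\cong\EGp(\Gamma_\RT)$ of Theorem~\ref{cor:QQ}, it identifies each cell $\cub(\T_0)$ with the corresponding cell $\cub(\h_0)$ via the period map \eqref{eq:period} on the spectral cover (the central charge of a simple is the period of $\sqrt{\phi}$ over the closed curve covering the dual arc), obtaining $\kappa_\T$ on $B_0(\surfo)$, and then extends it over the whole space using the $\CC$-action as in \cite[Prop.~11.3]{BS}. With that construction, the compatibility with the skeletal embeddings and the equivariance for $\BT(\surfo)\cong\ST(\Gamma_\RT)$ --- precisely the input your lifting argument needs --- come out as consequences rather than hypotheses. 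If you wish to keep the covering-space route, you must first establish the monodromy matching independently of $\kappa_\T$.
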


\begin{proof}
We sketch the construction/proof of these isomorphisms as
\danger{a straightforward generalisation of the arguments of Bridgeland-Smith \cite[\S11]{BS}.}
\AKedit{The argument for \eqref{eq:stab/st=quad} is almost identical to that for \eqref{eq:BS}
and indeed the two are directly related via \eqref{eq:BS 9.9}.}
Hence we only need to discuss the case \eqref{eq:stab=quad}.

\AKedit{We start from
the isomorphism of exchange graphs $\EGT(\surfo)\isom\EGp(\Gamma_\RT)$
as in Theorem~\ref{cor:QQ}.}
At each vertex of these graphs,
i.e. a triangulation $\T_0$ in $\EGT(\surfo)$ and a heart $\h_0$ in $\EGp(\Gamma_\RT)$,
the open arcs $\gamma_i$ in $\T_0$
are in one-to-one correspondence to the simples $S_i$ of $\h_0$
\AKedit{(see \cite[Thm~6.6]{QQ}).}
Then we can identify any $\surfo$-framed quadratic differential
$(\xx,\phi,\psi) \in \cub(\T_0)$ with a unique stability condition $\sigma\in\cub(\h_0)$.
More precisely,
\note{
consider the spectral cover $\Sigma_\xx$ of $\xx$,
that is, the double cover associated to the quadratic differential $\phi$,
branched at the zeroes and odd order poles,
and the corresponding abelian differential $\sqrt{\phi}$ on $\Sigma_\xx$.
}
Then the central charge of $Z$ of $\sigma$ is given by the formula
\AKedit{(cf. \cite[\S1.1]{BS})}
\begin{equation}\label{eq:period}
     Z(S_i)=\int_{\widetilde{\eta_i}} \sqrt{\phi},
\end{equation}
where $\widetilde{\eta_i}$ is the simple closed curve on $\Sigma_\xx$ that
covers to the closed arc $\eta_i=\psi(\gamma_i^*)$ in the dual graph $\T_0^*$.
Thus, we have an isomorphism $\kappa_\T\colon \cub(\T_0)\to \cub(\h_0)$
and hence $\kappa_\T\colon B_0(\surfo)\to B_0(\Gamma_\RT)$.
Following the idea in the proof of \cite[Prop.~11.3]{BS},
\note{i.e. using the $\CC$-action on both sides to perturb things a little,}
$\kappa_\T$ can be extended to \AKedit{the whole of} $\FQuad{\T}{\surfo}$ and we eventually obtain the isomorphism as required.
\end{proof}

We summarize the results in this section in Figure~\ref{fig:quad-stab},
where the red/blue middle horizontal maps are isomorphisms between spaces/groups.

\begin{figure}[ht]\centering
\begin{tikzpicture}[xscale=1.8,yscale=1.2]
\draw (-1,1.5) node (s) {$\FQuad{}{\surfo}$}
   (0,3) node (s0) {$\FQuad{\T}{\surfo}$}
   (1,1.5) node (s1) {$\FQuad{}{\surf}$}
   (0,0) node (s2) {$\Quad(\surf)$};
\draw [->, font=\scriptsize,>=stealth]
   (s0) edge node [right] (bt) {$\;\BT(\surfo)$} (s1) 
   (s1) edge node [right] (m1) {$\;\MCG(\surf)$} (s2); \draw (4,3) node (t0) {$\Stap(\Gamma_\RT)$}
   (3,1.5) node (t1) {$\Stap(\Gamma_\RT)/\ST(\Gamma_\RT)$}
   (4,0) node (t2) {$\Stap(\Gamma_\RT)/\Autp(\Gamma_\RT)$};
\draw[font=\scriptsize,->,>=stealth] (t0)
     edge node[left] (st) {$\ST(\Gamma_\RT)\;$} (t1)
      (t1) edge node [left] (m2) {$\;\Autp/\ST\;$} (t2); \foreach \j in {1,2,0}{\draw[->,,>=stealth,red!80,thick](s\j)edge(t\j);}
\draw[->,font=\scriptsize,>=stealth](bt)edge[blue] node[above,white] {$\iota_\T$} node[below]{}(st);
\draw[font=\scriptsize]
     (m1)edge[blue,->,>=stealth](m2)
     (t0)edge[bend left,->,>=stealth] node[right]
{$\Autp(\Gamma_\RT)$}(t2)
      (s)edge[->,>=stealth] node[above] {$\SBr(\surfo)$} (s1)
      (s)edge[->,>=stealth] node[below left] {$\MCG(\surfo)$} (s2); \draw(s0)edge[right hook-stealth](s); \end{tikzpicture}
\caption{Comparing Quad and Stab.}
\label{fig:quad-stab}
\end{figure}
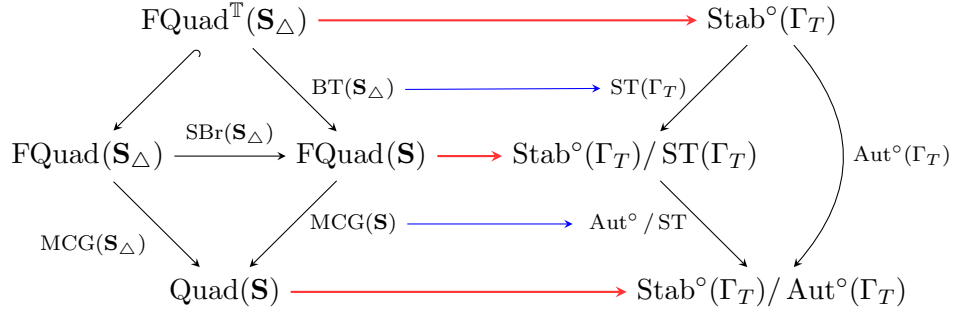

\subsection{Simply connectedness}\label{sec:simply}
We finish the paper by achieving the main aim of the series, namely proving that $\Stap(\Gamma_\RT)$ is simply connected.
To do this we need to upgrade \cite[Lemma~4.6]{Q2}, i.e. that squares and pentagons can be contracted in $\Stap(Q)$ for a Dynkin quiver $Q$, to our context and include the fact that hexagons can be contracted.


\begin{proposition}\label{pp:hex}
Let $\Gamma=\Gamma(Q,W)$ be the Ginzburg algebra of a non-degenerate quiver with potential.
Let $\h$ be a heart in $\EGp(\Gamma)$ with simples $S_i$ and $S_j$ satisfying
$\Ext^1(S_i,S_j)=0$, so that there is a hexagon $H$ in $\EGp$, as in Lemma~\ref{lem:456}.
Then the image, in $\Stap(\Gamma)$, of $H$ under $\skel_\Gamma$, as in \eqref{eq:embed},
is trivial in $\pi_1\Stap(\Gamma)$.
Similarly, the images of pentagons and squares, as in Remark~\ref{rem:456},
are also trivial loops.
\end{proposition}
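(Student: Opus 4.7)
The approach extends the argument of \cite[Lemma~4.6]{Q2} from the Dynkin case to the general setting, and incorporates the new case of hexagons. The common strategy is to realize each loop inside a real $4$-dimensional slice of $\Stap(\Gamma)$ obtained by varying only the central charges $Z(S_i), Z(S_j)$ of the two simples involved (with all other central-charge data fixed at generic values). Since the local cell structure of $\Stap(\Gamma)$ near the relevant cells is determined entirely by these two coordinates, the image of each loop lies in this slice, and it will be enough to exhibit an explicit null-homotopy there.

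For the square case ($\Ext^1(S_i,S_j)=0=\Ext^1(S_j,S_i)$), the slice is essentially $\UHP \times \UHP$ with the natural cell decomposition coming from the extended upper half-plane $\UHP \cup \RR_{<0}$; the four cells $\cub(\h), \cub(\h_i), \cub(\h_j), \cub(\h_{ij})$ glue along codimension-one walls corresponding to $Z(S_i) \in \RR_{>0}$ or $Z(S_j)\in\RR_{>0}$, meeting at a single codimension-two stratum. A small loop encircling this stratum bounds a disc in the slice by a direct calculation, exactly as in the cited proof. The pentagon case is analogous: with $\Ext^1(S_j,S_i) = \k$, one must also take account of the extension $T_j = \twi_{S_i}^{-1}(S_j)$ whose class satisfies $[T_j] = [S_j] + r[S_i]$ in the Grothendieck group for appropriate $r$; the resulting wall structure in $\UHP \times \UHP$ is the standard pentagonal one, and the loop is again contractible there.

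For the hexagon case, the key observation is that the edge $\h_i \to \twi_{S_i}^{-1}(\h)$ (and similarly $\h_{ji} \to \twi_{S_i}^{-1}(\h_j)$) corresponds, in the slice, to rotating $Z(S_i)$ continuously from the upper half-plane through $\RR_{>0}$ and back up as the central charge of $S_i[-1] = \twi_{S_i}^{-1}(S_i)[1]$; combined with the preceding edge $\h \to \h_i$, this realizes the composite of two forward tilts as a single continuous path in $\Stap(\Gamma)$ where $Z(S_i)$ executes a full half-turn below the real axis and returns to $\UHP$. With this interpretation, both paths of the hexagon are continuous paths in the $4$-dimensional slice from $\skel(\h)$ to $\skel(\twi_{S_i}^{-1}(\h_j))$, and their concatenation becomes a genuine loop in a subspace of $\CC^2$ with certain affine complex lines (determined by the walls $Z(S_i) = 0$ and $Z(S_j) + rZ(S_i) = 0$) removed. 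The hard part will be tracking the Grothendieck-group identifications at each vertex of the hexagon carefully enough to verify that the two paths are lifts of homotopic paths in this punctured $\CC^2$; once this is established, the null-homotopy follows from the elementary fact that $\CC^2$ with finitely many complex lines removed has free fundamental group, and our loop, being built out of commuting half-turns of $Z(S_i)$ and $Z(S_j)$, is null in the abelianization of the relevant local configuration.
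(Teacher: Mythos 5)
There is a genuine gap, concentrated in the hexagon case. Your reduction to a two--coordinate picture and your use of rotations of $Z(S_i)$ (i.e.\ the $\CC$-action) to realise the composite $\h\to\h_i\to\twi_{S_i}^{-1}(\h)$ is the right instinct, and it is close in spirit to what the paper does. But your endgame does not work. First, the complement of finitely many complex lines in $\CC^2$ does not in general have free fundamental group (for a central arrangement of $k\ge 3$ lines it is a nontrivial extension of a free group by $\ZZ$ and is nonabelian; for two lines it is $\ZZ^2$, which is not free), and in any case ``trivial in the abelianization'' does not imply ``null-homotopic'' in a nonabelian group. Second, even if the loop were contractible in the punctured central-charge space, that does not by itself contract it in $\Stap(\Gamma)$: the central charge map is only a local homeomorphism, and you must exhibit a null-homotopy whose entire image stays inside the union of the six cells $\cub(\h),\dots,\cub(\h_{jii})$ and their separating walls. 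You explicitly defer the verification that the two sides of the hexagon are ``lifts of homotopic paths'' — but that verification, carried out inside $\Stap(\Gamma)$, \emph{is} the proof; it is not a technicality that can be outsourced to the topology of $\CC^2$ minus lines.

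For comparison, the paper's proof does exactly this missing work: it chooses explicit stability conditions $\sigma_0,\sigma_1,\sigma_2\in\cub(\h_{ji})$ and $\sigma_0',\sigma_1',\sigma_2'\in\cub(\h_i)$ (with one phase parameter $\delta$ small and the remaining central charges pushed to phase near $\tfrac12$ with large modulus, so that no extraneous walls interfere), and then builds three contiguous strips of the form $[\text{interval}]\cdot\ell$ for line segments $\ell$ in a single cell, using the criterion of Remark~\ref{rem:criterion} to identify precisely which cell or wall each piece of each strip lies in. The union of the three strips is a disc in $\Stap(\Gamma)$ whose boundary is homotopic to $\skel_\Gamma(H)$. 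If you want to salvage your approach, you would need to replace the abelianization argument by an explicit disc of this kind (or prove a homotopy-lifting statement for the central charge map over the relevant region), and also justify the claim that all other coordinates can be held fixed while remaining inside the cells — which is where the ``large $M$, small $\delta$'' choices in the paper come from.
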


\begin{proof}
The hexagon $H$ consists of the following six hearts, linked by tilts in the simples labelling the edges,
\begin{gather}\label{eq:hexagon}
\begin{tikzpicture}[arrow/.style={->,>=stealth},rotate=-90,xscale=1,yscale=2]]
\draw (0,0) node (t1) {$(\h;S_i,S_j)$}
    (0,2) node (t2) {$(\h_j;S_i,S_j[1])$}
    (2,3) node (t3) {$(\h_{ji};S_i[1],S_j[1])$}
    (4,2) node (t4) {$(\h_{jii};S_i[2],T_j[1])$}
    (4,0) node (t5) {$(\h_{ii};S_i[2],T_j)$}
    (2,-1) node (t6) {$(\h_i;S_i[1],T_j)$};
\draw [->, font=\scriptsize]
    (t1)edge node[above]{$S_j$} (t2)
    (t2)edge node[right]{$S_i$} (t3)
    (t5)edge node[below]{$T_j$} (t4)
    (t6)edge node[left]{$S_i[1]$} (t5)
    (t1)edge node[left]{$S_i$} (t6)
    (t3)edge node[right]{$S_i[1]$} (t4);
\end{tikzpicture}
\end{gather}
where $\h_i=\tilt{\h}{\sharp}{S_i}$, $\h_j=\tilt{\h}{\sharp}{S_j}$, $\h_{ji}=\tilt{(\h_j)}{\sharp}{S_i}$,
$\h_{ii}=\twi_{S_i}^{-1}(\h)$, $\h_{jii}=\twi_{S_i}^{-1}(\h_j)$ and $T_j=\twi_{S_i}^{-1}(S_j)$.
Note that $\h_{ji}$ is also the forward tilt of $\h$
with respect to the torsion pair whose torsion free part is $\< S_i,S_j\>$, that is,
the extension-closed subcategory generated by $S_i$ and $S_j$.

Recall from \cite[Prop.~5.4]{KQ}, that the simples in $\tilt{\h}{\sharp}{S}$
are $S[1]$ together with $\tilt{\psi}{\sharp}{S}(X)$, for $X\in\Sim\h \setminus \{S\}$,
and the simples in $\tilt{\h}{\flat}{S}$ are $S[-1]$ together with $\tilt{\psi}{\flat}{S}(X)$,
for $X\in\Sim\h \setminus \{S\}$, where $\tilt{\psi}{\sharp}{S}(X)$ and $\tilt{\psi}{\flat}{S}(X)$ appear in the following exact triangles
\begin{gather}
  X[-1] \to S \otimes\Ext^1(X, S)^* \to \tilt{\psi}{\sharp}{S}(X) \to X \label{eq:triang-for-sharp} \\
  X \to \tilt{\psi}{\flat}{S}(X) \to S \otimes\Ext^1(S, X) \to X[1] \label{eq:triang-for-flat}
\end{gather}
or, by \cite[Rem.~7.1]{KQ}, are given by the following formulae
\begin{gather}
\tilt{\psi}{\sharp}{S}(X)= \begin{cases} \twi_{S}^{-1}(X) & \text{if $\Ext^1(X, S)\neq 0$,}\\
      X & \text{if $\Ext^1(X, S)= 0$,} \end{cases}  \label{eq:formula-for-sharp}\\
      \tilt{\psi}{\flat}{S}(X)= \begin{cases} \twi_{S}(X) & \text{if $\Ext^1(S, X)\neq 0$,}\\
      X & \text{if $\Ext^1(S, X)= 0$.} \end{cases} \label{eq:formula-for-flat}
\end{gather}
For example, $\tilt{\psi}{\sharp}{S_j}(S_i)=S_i$ since $\Ext^1(S_i, S_j)=0$,
while $\tilt{\psi}{\sharp}{S_i}(S_j)=T_j$.
Similar calculations show that
\begin{itemize}
\item $S_i$ and $S_j[1]$ are simples in $\h_j$, while $S_i[1]$ and $T_j$ are simples in $\h_i$,
\item $S_i[1]$ and $S_j[1]$ are simples in $\h_{ji}$,
\item $S_i[2]$ and $T_j$ are simples in $\h_{ii}$, while $S_i[2]$ and $T_j[1]$ are simples in $\h_{jii}$.
\end{itemize}
These, along with $S_i$ and $S_j$ in $\h$, are the key simples in the arguments that follow.

Recall from \S\ref{sec:stab} that any heart $\h_0$ gives a cell $\cub(\h_0)$ in $\Stap(\Gamma)$
that is isomorphic to $\UHP^{\numarc}$ using the coordinates $\{Z(S) \mid S\in\Sim\h_0 \}$.
Furthermore, each simple tilting gives a wall between the cells of the corresponding hearts.

\begin{figure}[ht]\centering
\newcommand{\sh}[1]{\overline{#1}}
\tikzset{axes/.style={dotted}, zarrow/.style={thick,->,>=latex,font=\scriptsize}}
\begin{tikzpicture}[scale=.5]
\draw[axes] (-1,0) -- (2,0) (0,-1) -- (0,6.2);
\draw (0,0) node[below left]{$0$};
\draw[zarrow] (0,0) -- (180/2:5.5)    node[left]{$Z_0(S)$};
\draw[zarrow] (0,0) -- (180-180/17:1)   node[left]{$Z_0(\sh{S_i})$};
\draw[zarrow] (0,0) -- (180-3*180/17:3.5) node[above]{$Z_0(\sh{S_j})$};
\end{tikzpicture}
\qquad
\begin{tikzpicture}[scale=.5]
\draw[axes] (-1,0) -- (2,0) (0,-1) -- (0,6.2);
\draw (0,0) node[below left]{$0$};
\draw[zarrow] (0,0) -- (180/2:5.5)    node[left]{$Z_1(S)$};
\draw[zarrow] (0,0) -- (180-3*180/17:1)   node[above]{$Z_1(\sh{S_i})$};
\draw[zarrow] (0,0) -- (180-180/17:3.5) node[below]{$Z_1(\sh{S_j})$};
\draw[zarrow] (0,0) -- (180-1.5*180/17:3.8) node[above]{$Z_1(\sh{T_j})$};
\end{tikzpicture}
\qquad
\begin{tikzpicture}[scale=.5]
\draw[axes] (-1,0) -- (2,0) (0,-1) -- (0,6.2);
\draw (0,0) node[below left]{$0$};
\draw[zarrow] (0,0) -- (180/2:5.5)    node[left]{$Z_2(S)$};
\draw[zarrow] (0,0) -- (3*180/17:1)   node[right]{$Z_2(\sh{S_i})$};
\draw[zarrow] (0,0) -- (180-180/17:3.5) node[below]{$Z_2(\sh{S_j})$};
\draw[zarrow] (0,0) -- (180-1.6*180/17:3.3) node[above]{$Z_2(\sh{T_j})$};
\end{tikzpicture}
\caption{Stability conditions $\sigma_0,\sigma_1$ and $\sigma_2$, denoting $X[1]$ by $\sh{X}$. }
\label{fig:sc}
\end{figure}

For $k=0,1,2$, define the stability conditions $\sigma_k=(Z_k,\slicing_k) \in\cub(\h_{ji})$ by
\[
 Z_k(S)=M\cdot\exp \textfrac{\mai\pi}{2}, \quad  \text{for $S\in\Sim\h_{ji}\setminus \{S_i[1],S_j[1]\}$,}
\]
and
\[\begin{tabular}{ L L }
Z_0(S_i[1]) = m\cdot\exp \mai\pi(1-\delta), &
Z_0(S_j[1]) = \exp \mai\pi(1-3\delta),\\
Z_1(S_i[1]) = m\cdot\exp \mai\pi(1-3\delta),&
Z_1(S_j[1]) = \exp\mai\pi(1-\delta),\\
Z_2(S_i[1]) = m\cdot\exp \mai\pi\delta,&
Z_2(S_j[1]) = \exp \mai\pi(1-\delta),
\end{tabular}\]
for some small $\delta,m\in\RR_{>0}$ and large $M\in\RR_{>0}$
(see Figure~\ref{fig:sc}).
By \eqref{eq:triang-for-sharp},
\[
    Z_k(T_j[1]) = \dim\Ext^1(S_j,S_i)\cdot Z_k(S_i[1])+Z_k(S_j[1]),
\]
Suppose that $T_j[1]\in\slicing_k(1-\epsilon_k)$, for $k=1,2$.
Then, by choosing $m, \delta$ small enough, we can assume
\begin{gather}\label{eq:2}
    \delta<\epsilon_k<2\delta.
\end{gather}
By also choosing $M$ large enough, the phases of simples in
the hearts $\h$, $\h_i$, $\h_j$ and $\h_{ji}$, other than those of $S_i,S_j,T_j$ and their shifts,
will be very close to $1/2$, for any stability condition in
$[-4\delta,0]\cdot\sigma_k$, when $k=0,1$ or $2$.

\begin{remark}\label{rem:criterion}
Let $\h$ be a finite heart and $\sigma=(Z,\slicing)$ a stability condition in $\cub(\h)$.
By the definition of the $\CC$-action, the heart of $(-\pha)\cdot\sigma$ is
$\h'=\slicing(-\pha,1-\pha]$.
Moreover, $\h[-1]\le\h'\le\h$, in the sense that
the corresponding t-structures satisfy
\[
    \slicing(-1,\infty)  \supset \slicing(-\pha,\infty) \supset \slicing(0,\infty).
\]
Thus (cf. \cite[Rem.~3.3]{KQ}) we have the following criterion.

The heart $\h'=\slicing(-\pha,1-\pha]$ is the backward tilt of
$\h=\slicing(0,1]$ with respect to the torsion pair $\<\torfree,\torsion\>$,
where
\[
 \torfree = \h\cap\h' = \slicing(0,1-\pha]\quad\text{and}\quad
 \torsion = \h\cap\h'[1] = \slicing(1-\pha,1] .
\]
Similarly the heart of $\pha\cdot\sigma$ is $\slicing(\pha,\pha+1]$, which is
the forward tilt of $\h$ with respect to the torsion pair $\<\slicing(0,\pha], \slicing(\pha,1] \>$.
\end{remark}

We can use Remark~\ref{rem:criterion}
to find the hearts of stability conditions obtained from $\sigma_k$ by the $\CC$-action.
For example, the heart $\h'$ of $(-\pha)\cdot\sigma_0$
is the backward tilt of $\h_{ji}$
with respect to the torsion pair whose torsion part $\torsion$ is $\hua{P}_0(1-\pha,1]$.
If $0\leq \pha<\delta$, then $\torsion$ is trivial and $\h'=\h_{ji}$.
If $\delta\leq \pha < 3\delta$, then $\torsion$ is generated by $S_i[1]$ and $\h'=\h_{j}$.
If $3\delta\leq \pha < 1/2$, then $\torsion$ is generated by $S_i[1]$ and $S_j[1]$ and $\h'=\h$.
In other words, we have the following.
\begin{itemize}
\item $(-\delta,0]\cdot\sigma_0$ is in $\cub(\h_{ji})$,
\item $(-\delta)\cdot\sigma_0$ is in the wall
    $\partial^\flat_{S_i[1]}\cub(\h_{ji})=\partial^\sharp_{S_i}\cub(\h_j)$,
\item $(-3\delta,-\delta)\cdot\sigma_0$ is in $\cub(\h_j)$,
\item $(-3\delta)\cdot\sigma_0$ is in the wall $\partial^\flat_{S_j[1]}\cub(\h_j)=\partial^\sharp_{S_j}\cub(\h)$,
\item $[-4\delta,-3\delta)\cdot\sigma_0$ is in $\cub(\h_{})$,
\end{itemize}
Thus
\begin{itemize}
\item[(1)]
    $[-4\delta,0]\cdot\sigma_0$ is homotopic to $\skel_\Gamma(\h\to\h_j\to\h_{ji})$.
\end{itemize}
Similarly, the heart of $(-4\delta)\cdot\sigma_1$ is $\h$
and we may apply the $\CC$-action in a positive direction from there to deduce that
\begin{itemize}
\item $[-4\delta,-3\delta)\cdot\sigma_1$ is in $\cub(\h)$,
\item $(-3\delta)\cdot\sigma_1$ is in the wall $\partial^\flat_{S_i[1]}\cub(\h_i)=\partial^\sharp_{S_i}\cub(\h)$,
\item $(-3\delta,-\epsilon_1)\cdot\sigma_1$ is in $\cub(\h_{i})$.
\end{itemize}
Thus, as $\epsilon_1< 2\delta$,
\begin{itemize}
\item[(2)]
$[-4\delta,-2\delta]\cdot\sigma_1$ is homotopic to $\skel_\Gamma(\h\to\h_i)$.
\end{itemize}
Indeed, since we can observe that $\h_{ji}$ is the forward tilt of $\h_i$ with respect to the torsion theory with
torsion-free part $\torfree=\<T_j,S_j\>$,
we can apply Remark~\ref{rem:criterion} to see that
\[
  \sigma_2'\ := (-2\delta)\cdot\sigma_1\quad\text{and}\quad\sigma_1' :=(-2\delta)\cdot\sigma_2
\]
are both in $\cub(\h_i)$.
Furthermore, we can choose $\sigma_0'\in\cub(\h_{i})$ given by
\begin{align*}
    Z_0'(S) &= M \cdot\exp \textfrac{\mai\pi}{2},\quad  \text{for $S\in\Sim\h_{i}\setminus \{S_i[1],T_j\}$,}\\
    Z_0'(S_i[1]) &= m \cdot\exp \mai\pi\delta ,\\
    Z_0'(T_j) &= \exp 3\mai\pi\delta .
\end{align*}

The scheme of how the proof is completed is depicted in Figure~\ref{fig:strip}.
The goal is to produce a curve that is homotopic to the hexagon $H$ and
that is also the boundary of the union of three contiguous strips in $\Stap(\Gamma)$,
showing that this curve is contractible.

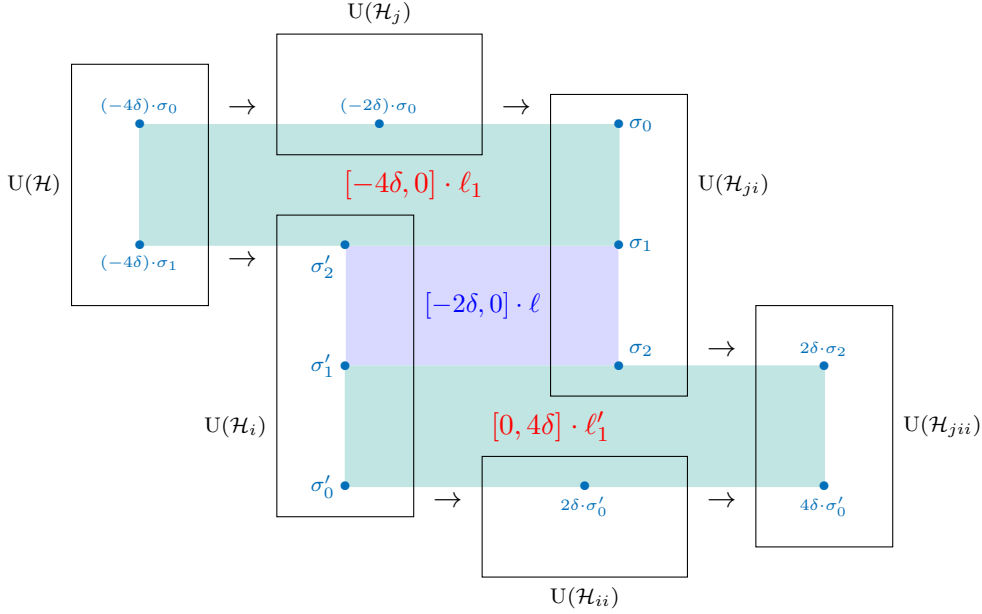
\begin{figure}[h]
\begin{tikzpicture}[yscale=.8,xscale=.9]
\draw[white,fill=blue!15,font=\scriptsize]
    (5,4)rectangle(9,6);
\draw[Emerald!23,fill=Emerald!23,font=\scriptsize]
    (2,8)rectangle(9,6);
\draw[Emerald!23,fill=Emerald!23,font=\scriptsize]
    (5,4)rectangle(12,2);
\draw[NavyBlue,font=\scriptsize](9,8)node{$\bullet$}node[right]{$\sigma_0$}
    (9,6)node{$\bullet$}node[right]{$\sigma_1$}
    (9,4)node{$\bullet$}node[above right]{$\sigma_2$}
    (5,6)node{$\bullet$}node[below left]{$\sigma_2'$}
    (5,4)node{$\bullet$}node[left]{$\sigma_1'$}
    (5,2)node{$\bullet$}node[left]{$\sigma_0'$}
    (5.5,8)node{$\bullet$}node[above]{$_{(-2\delta)\cdot\sigma_0}$}
    (2,8)node{$\bullet$}node[above]{$_{(-4\delta)\cdot\sigma_0}$}
    (2,6)node{$\bullet$}node[below]{$_{(-4\delta)\cdot\sigma_1}$}
    (12,4)node{$\bullet$}node[above]{$_{2\delta\cdot\sigma_2}$}

    (8.5,2)node{$\bullet$}node[below]{$_{2\delta\cdot\sigma_0'}$}
    (12,2)node{$\bullet$}node[below]{$_{4\delta\cdot\sigma_0'}$};

\draw[font=\scriptsize](1,5)rectangle(3,9) (1,7)node[left]{$\cub(\h)$};
\draw[font=\scriptsize](13,5)rectangle(11,1) (13,3)node[right]{$\cub(\h_{jii})$};
\draw[font=\scriptsize](4,1.5)rectangle(6,7-.5) (10,7)node[right]{$\cub(\h_{ji})$};
\draw[font=\scriptsize](10,9-.5)rectangle(8,3.5) (4,3)node[left]{$\cub(\h_{i})$};
\draw[font=\scriptsize](4,7+.5)rectangle(7,9.5) (5.5,9.5)node[above]{$\cub(\h_{j})$};
\draw[font=\scriptsize](10,2.5)rectangle(7,.5) (8.5,.5)node[below]{$\cub(\h_{ii})$};
\draw[](3.5,8)node[above]{$\rightarrow$}(7.5,8)node[above]{$\rightarrow$}
    (3.5,6)node[below]{$\rightarrow$}
    (10.5,4)node[above]{$\rightarrow$};
\draw[](10.5,2)node[below]{$\rightarrow$}(6.5,2)node[below]{$\rightarrow$};
\draw[red] (6,7) node{$[-4\delta,0] \cdot \ell_1$};
\draw[red] (8,3) node{$[0,4\delta] \cdot \ell_1'$};
\draw[blue,font=\small] (7,5) node{$[-2\delta,0] \cdot \ell$};
\end{tikzpicture}
\caption{Filling in the hexagon $H$ with three strips in $\Stap(\Gamma)$}\label{fig:strip}
\end{figure}

The vertical edges of the violet strip in the middle of Figure~\ref{fig:strip} arise as follows:
\begin{itemize}
\item[(3)] there is a line (segment) $\ell$ in $\cub(\h_{ji})$ that connects $\sigma_1$ to $\sigma_2$
by varying just the coordinate $Z(S_i[1])$,
\item[(4)] the line $(-2\delta)\cdot \ell$ 
is in $\cub(\h_i)$, for the same reason that $\sigma_2'$ and $\sigma_1'$ are.
\end{itemize}
Similarly, we can produce the vertical edges of the outer green strips in Figure~\ref{fig:strip}:
\begin{itemize}
\item[(5)]
there is a line $\ell_1$ in $\cub(\h_{ji})$ connecting $\sigma_0$ to $\sigma_1$
by varying just the coordinates $Z(S_i[1])$ and $Z(S_j[1])$,
such that the line $(-4\delta)\cdot \ell_1$ is in $\cub(\h)$;
\item[(6)]
there is a line $\ell_1'$ in $\cub(\h_{i})$ connecting $\sigma_1'$ to $\sigma_0'$
by varying just the coordinates $Z(S_i[1])$ and $Z(T_j)$,
such that the line $4\delta\cdot \ell_1'$ is in $\cub(\h_{jii})$.
\end{itemize}
By construction, the three strips $[-4\delta,0] \cdot \ell_1$, $[-2\delta,0] \cdot \ell$ and
$[0,4\delta] \cdot \ell_1'$ are contiguous in the way indicated in Figure~\ref{fig:strip}.
Finally, similar to (1) and (2) above,
noting that $\h_{jii}$ is the forward tilt of $\h_i$
with respect to the torsion pair with torsion-free part $\torfree=\<T_j,S_i[1]\>$,
we have:
\begin{itemize}
\item[(7)] $[0, 4\delta]\cdot\sigma_0'$ is homotopic to $\skel_\Gamma(\h_i\to\h_{ii}\to\h_{jii})$.
\item[(8)] $[2\delta,4\delta]\cdot\sigma_1'$ is homotopic to $\skel_\Gamma(\h_{ji}\to\h_{jii})$.
\end{itemize}
Combining all the facts (1) to (8) above,
we deduce that the boundary loop of the union of the three strips
is homotopic to the image $\skel_\Gamma(H)$ of the hexagon $H$ in \eqref{eq:hexagon}
and hence this is contractible in $\Stap(\Gamma)$.

The cases of squares and pentagons work in the same way as \cite[Lemma~4.6]{Q2},
which uses a simpler analogue of the above argument.
\end{proof}

From this and earlier results we can now conclude.

\begin{theorem}\label{thm:s.c.}
Let $\RT$ be a triangulation of an (unpunctured) marked surface $\surf$ and
$\Gamma_\RT$ be the associated Ginzburg dg algebra.
Then the principal component $\Stap(\Gamma_\RT)$ is simply connected.
\end{theorem}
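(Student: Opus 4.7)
The plan is to execute the three-step strategy outlined in the introduction, all the ingredients of which are now in place. I would first fix the skeleton embedding $\skel_\Gamma\colon\EGp(\Gamma_\RT)\to\Stap(\Gamma_\RT)$ from \eqref{eq:embed} and consider the induced map
\[
  \skel_*\colon \pi_1 \EGp(\Gamma_\RT)\longrightarrow \pi_1 \Stap(\Gamma_\RT).
\]
The goal is to show (1) $\skel_*$ is surjective, (2) $\skel_*$ kills all the hexagon, pentagon and square relations of $\egp(\Gamma_\RT)$, so that it descends to a map $\pi_1\egp(\Gamma_\RT)\to\pi_1\Stap(\Gamma_\RT)$, and (3) $\pi_1\egp(\Gamma_\RT)=1$.

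For (1), I would invoke the isomorphism $\kappa_\T\colon\Stap(\Gamma_\RT)\cong\FQuad{\T}{\surfo}$ of Theorem~\ref{thm:Stab=Quad}, which identifies $\skel_\Gamma$ with the quadratic-differential skeleton $\skel_{\surfo}\colon\EGT(\surfo)\to\FQuad{\T}{\surfo}$ of \eqref{eq:Sembed} via the graph isomorphism $\EGT(\surfo)\cong\EGp(\Gamma_\RT)$ of Theorem~\ref{cor:QQ}. Surjectivity on $\pi_1$ then follows from Lemma~\ref{lem:f.d.1}, whose content is a direct adaptation of the codimension-two argument of Bridgeland--Smith: any loop in $\FQuad{\T}{\surfo}$ can be pushed off the codimension-two strata and onto $B_2(\surfo)$, which is dual to $\EGT(\surfo)$. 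Step (2) is immediate from Proposition~\ref{pp:hex}, which shows that the images under $\skel_\Gamma$ of every square, pentagon and hexagon in $\EGp(\Gamma_\RT)$ are null-homotopic loops in $\Stap(\Gamma_\RT)$; since these are precisely the relations imposed in Definition~\ref{def:gpd.h}, the map $\skel_*$ factors through the quotient $\pi_1\EGp(\Gamma_\RT)\to\pi_1\egp(\Gamma_\RT)$.

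Step (3) is where the main work of the first half of the paper pays off: by \eqref{eq:egp=egt} in Theorem~\ref{cor:QQ} we have $\egp(\Gamma_\RT)\cong\egt(\surfo)$, and Theorem~\ref{thm:45} asserts that $\egt(\surfo)$ is simply connected. The latter is obtained by combining the Galois covering $F_*\colon\egt(\surfo)\to\eg(\surf)$ (Lemma~\ref{lem:covering}) with the classical fact that $\pi_1\uEG(\surf)$ is generated by squares and pentagons (Theorem~\ref{thm:FST45}), and matching the relations of $\BT(\T)$ coming from the presentation of Theorem~\ref{thm:QZ} against those satisfied by local twists in the cluster braid group (Proposition~\ref{cor:relations}). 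Combining (1), (2) and (3), $\skel_*$ factors through the trivial group and is simultaneously surjective, hence $\pi_1\Stap(\Gamma_\RT)=1$.

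The conceptually load-bearing step is Proposition~\ref{pp:hex}, since the hexagon case requires the delicate explicit contraction built from three contiguous strips in $\Stap(\Gamma)$ obtained by careful applications of the $\CC$-action; this is the one place where we use more than formal properties of the skeleton, and it is essentially what forced the introduction of the hexagonal dumbbell relation in Definition~\ref{def:ceg} in the first place. Everything else is already packaged into the cited results, so the proof itself is little more than the three-line assembly described above.
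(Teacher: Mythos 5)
Your proposal is correct and follows exactly the same three-step strategy as the paper's own proof: surjectivity of $\skel_*$ via the identification $\Stap(\Gamma_\RT)\cong\FQuad{\T}{\surfo}$ of Theorem~\ref{thm:Stab=Quad} together with Lemma~\ref{lem:f.d.1}, factorisation through $\pi_1\egp(\Gamma_\RT)$ via Proposition~\ref{pp:hex}, and triviality of $\pi_1\egt(\surfo)\cong\pi_1\egp(\Gamma_\RT)$ via Theorems~\ref{cor:QQ} and~\ref{thm:45}. There is nothing to add or correct.
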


\begin{proof}
The immediate corollary of Proposition~\ref{pp:hex} is that the map
\begin{gather}\label{eq:EGmap}
    \pi_1\EGp(\Gamma_T) \to \pi_1\Stap(\Gamma_T),
\end{gather}
induced by the embedding \eqref{eq:embed}, factors through a map
\begin{gather}\label{eq:egmap}
    \pi_1\egp(\Gamma_T) \to \pi_1\Stap(\Gamma_T).
\end{gather}

On the other hand, by Theorem~\ref{thm:Stab=Quad},
we know $\Stap(\Gamma_\RT)\cong\FQuad{\T}{\surfo}$,
where $\surfo$ is some decoration of $\surf$ and $\T$ is some triangulation of $\surfo$ lifting $\RT$.
Furthermore, the map \eqref{eq:EGmap} is identified with
\begin{gather}
    \pi_1\EGT(\surfo) \to \pi_1\FQuad{\T}{\surfo}
\end{gather}
which is surjective, by Lemma~\ref{lem:f.d.1}, and so the map \eqref{eq:egmap} is surjective.

Finally, $\egp(\Gamma_T)\cong \egt(\surfo)$, by Theorem~\ref{cor:QQ},
which is simply-connected, by Theorem~\ref{thm:45}.
Hence $\pi_1\Stap(\Gamma_T)$ is trivial, as required.
\end{proof}

\note{
A direct corollary about the space of quadratic differentials is the following.
\begin{corollary}\label{cor:s.c.}
The moduli space $\FQuad{}{\surfo}$ of $\surfo$-framed quadratic differentials
consists of $\SBr(\surfo)/\BT(\surfo)$ many connected components,
each of which is isomorphic to $\FQuad{\T}{\surfo}$ and they are all simply connected.
\end{corollary}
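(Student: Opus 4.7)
The plan is to assemble three ingredients: the simple connectedness of a distinguished component, the Galois covering structure of $\FQuad{}{\surfo}\to\FQuad{}{\surf}$, and a standard orbit--stabiliser argument for the $\pi_0$ count.

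First I would invoke Theorem~\ref{thm:s.c.} together with the isomorphism $\kappa_\T$ of Theorem~\ref{thm:Stab=Quad} to conclude that $\FQuad{\T}{\surfo}\cong\Stap(\Gamma_\RT)$ is simply connected. Next, from diagram \eqref{eq:quad0} and Lemma~\ref{lem:SBr}, the forgetful map $\FQuad{}{\surfo}\to\FQuad{}{\surf}$ is a Galois covering with Galois group $\SBr(\surfo)$. The base is connected: by Lemma~\ref{lem:iota_surf} the exchange graph $\EG(\surf)$ embeds in $\FQuad{}{\surf}$ as a skeleton, and $\EG(\surf)$ is itself connected because any two triangulations of $\surf$ are related by a sequence of flips. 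Since the base is connected, every component of $\FQuad{}{\surfo}$ meets every fibre of the covering, so the deck group $\SBr(\surfo)$ acts transitively on $\pi_0\FQuad{}{\surfo}$.

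To close the argument, I would identify the stabiliser of $\FQuad{\T}{\surfo}$ as precisely $\BT(\surfo)\leq\SBr(\surfo)$. Combinatorially this is Lemma~\ref{lem:covering}, asserting that $\egt(\surfo)\to\eg(\surf)$ is Galois with group $\BT(\surfo)$; via the skeletal embeddings $\skel_\surf$ and $\skel_{\surfo}$ this transfers to the statement, already recorded in Figure~\ref{fig:quad-stab}, that the restricted topological covering $\FQuad{\T}{\surfo}\to\FQuad{}{\surf}$ is Galois with group $\BT(\surfo)$. Orbit--stabiliser then yields a bijection $\pi_0\FQuad{}{\surfo}\leftrightarrow\SBr(\surfo)/\BT(\surfo)$, and any representative of a coset provides an element of $\SBr(\surfo)$ carrying the corresponding component homeomorphically onto $\FQuad{\T}{\surfo}$. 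Combined with the first step, this shows that all components are isomorphic to $\FQuad{\T}{\surfo}$ and each is simply connected. The only genuinely nontrivial input is Theorem~\ref{thm:s.c.}; the rest is a routine covering-space analysis over a connected base, so no serious obstacle is anticipated.
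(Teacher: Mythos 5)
Your proposal is correct and follows essentially the same route as the paper: simple connectedness of $\FQuad{\T}{\surfo}$ from Theorem~\ref{thm:s.c.} via \eqref{eq:stab=quad}, and the component count from comparing the $\SBr(\surfo)$-covering $\FQuad{}{\surfo}\to\FQuad{}{\surf}$ with the $\BT(\surfo)$-covering $\FQuad{\T}{\surfo}\to\FQuad{}{\surf}$, as summarised in Figure~\ref{fig:quad-stab}. You merely spell out the transitivity of the deck action on $\pi_0$ and the orbit--stabiliser step, which the paper leaves implicit.
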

\begin{proof}
The simply connectedness is a direct corollary of Theorem~\ref{thm:s.c.} and \eqref{eq:stab=quad}.
On the other hand, as summarised in Figure~\ref{fig:quad-stab},
the space $\FQuad{}{\surfo}$ is a $\SBr(\surfo)$ covering of $\FQuad{}{\surf}$,
while any component $\FQuad{\T}{\surfo}$ is a $\BT(\surfo)$ covering of $\FQuad{}{\surf}$.
Hence the action of $\SBr(\surfo)$ provides isomorphisms between components and the set of connected components is acted on simply transitively by $\SBr(\surfo)/\BT(\surfo)$.
\end{proof}
}


\end{document}